
\documentclass[a4paper,reqno,11pt]{amsart}

\usepackage{amsmath, amsfonts, amssymb, amsthm, amscd}
\usepackage{graphicx}
\usepackage{psfrag}
\usepackage{perpage}
\usepackage{url}
\usepackage{color}
\usepackage{mathrsfs}
\usepackage{tikz}
\usepackage{mathabx}
\usepackage{scalerel}
\usetikzlibrary{arrows,decorations.pathmorphing,backgrounds,positioning,fit,petri} 

\usepackage{dsfont} 

\usepackage[utf8]{inputenc}
\usepackage[T1]{fontenc}
\usepackage{microtype}

\usepackage[a4paper,scale={0.72,0.74},marginratio={1:1},footskip=7mm,headsep=10mm]{geometry}

\usepackage{hyperref}


\makeatletter
\def\@secnumfont{\bfseries\scshape}

\def\section{\@startsection{section}{1}%
  \z@{.7\linespacing\@plus\linespacing}{.5\linespacing}%
  {\normalfont\large\bfseries\scshape\centering}}

\def\subsection{\@startsection{subsection}{2}%
  \z@{.5\linespacing\@plus.7\linespacing}{-.5em}%
  {\normalfont\bfseries\scshape}}

\def\subsubsection{\@startsection{subsubsection}{3}%
  \z@{.5\linespacing\@plus.7\linespacing}{-.5em}%
  {\normalfont\scshape}}

\def\specialsection{\@startsection{section}{1}%
  \z@{\linespacing\@plus\linespacing}{.5\linespacing}%
  {\normalfont\centering\large\bfseries\scshape}}
\makeatother

%
%
%


\makeatletter

\renewenvironment{proof}[1][\proofname]{\par
\pushQED{\qed}%
\normalfont \topsep4\p@\@plus4\p@\relax
\trivlist
\item[\hskip\labelsep
\bfseries
#1\@addpunct{.}]\ignorespaces
}{%
\popQED\endtrivlist\@endpefalse
}
\makeatother

\setcounter{tocdepth}{2}

\makeatletter
\newcommand \Dotfill {\leavevmode \leaders \hb@xt@ 6pt{\hss .\hss }\hfill \kern \z@}
\makeatother

\makeatletter
\def\@tocline#1#2#3#4#5#6#7{\relax
  \ifnum #1>\c@tocdepth 
  \else
    \par \addpenalty\@secpenalty\addvspace{#2}%
    \begingroup \hyphenpenalty\@M
    \@ifempty{#4}{%
      \@tempdima\csname r@tocindent\number#1\endcsname\relax
    }{%
      \@tempdima#4\relax
    }%
    \parindent\z@ \leftskip#3\relax \advance\leftskip\@tempdima\relax
    \rightskip\@pnumwidth plus4em \parfillskip-\@pnumwidth
    #5\leavevmode\hskip-\@tempdima
      \ifcase #1
       \or\or \hskip 1.65em \or \hskip 3.3em \else \hskip 4.95em \fi%
      #6\nobreak\relax
    \Dotfill
    \hbox to\@pnumwidth{\@tocpagenum{#7}}\par
    \nobreak
    \endgroup
  \fi}
\makeatother

\makeatletter
\def\l@section{\@tocline{1}{0pt}{1pc}{}{\scshape}}
\renewcommand{\tocsection}[3]{%
\indentlabel{\@ifnotempty{#2}{\ignorespaces#1 #2.\hskip 0.7em}}#3}
\def\l@subsection{\@tocline{2}{0pt}{1pc}{5pc}{}}

\def\l@subsubsection{\@tocline{3}{0pt}{1pc}{7pc}{}}

\makeatother

%


\setcounter{secnumdepth}{2}

\frenchspacing

\numberwithin{equation}{section}


\newtheoremstyle{mytheorem}{.7\linespacing\@plus.3\linespacing}{.7\linespacing\@plus.3\linespacing}%
     {\itshape}
     {}
     {\bfseries}
     {. }
     {0.3ex}
     {\thmname{{\bfseries #1}}\thmnumber{ {\bfseries #2}}\thmnote{ (#3)}}  

\theoremstyle{mytheorem}

\newtheorem{theorem}{Theorem}[section]
\newtheorem{lemma}[theorem]{Lemma}
\newtheorem{proposition}[theorem]{Proposition}
\newtheorem{corollary}[theorem]{Corollary}
\newtheorem{remark}[theorem]{Remark}


\newcommand{\bbE}{{\ensuremath{\mathbb E}} }

\newcommand{\bbP}{{\ensuremath{\mathbb P}} }

\newcommand{\bbT}{{\ensuremath{\mathbb T}} }


\newcommand\bsf{\boldsymbol{f}}

\newcommand\bsG{\boldsymbol{G}}

\newcommand\bsU{\boldsymbol{U}}

\newcommand\bsY{\boldsymbol{Y}}
\newcommand\bsZ{\boldsymbol{Z}}

\newcommand\bslambda{\boldsymbol{\lambda}}
\newcommand\bsnu{\boldsymbol{\nu}}


\newcommand{\cN}{{\ensuremath{\mathcal N}} }




\newcommand{\gl}{\lambda}


\DeclareMathSymbol{\leqslant}{\mathalpha}{AMSa}{"36} 
\DeclareMathSymbol{\geqslant}{\mathalpha}{AMSa}{"3E} 
\DeclareMathSymbol{\eset}{\mathalpha}{AMSb}{"3F}     

\newcommand{\sumtwo}[2]{\sum_{\substack{#1 \\ #2}}} 


\newcommand{\be}{\begin{equation}}
\newcommand{\ee}{\end{equation}}


\newcommand{\R}{\mathbb{R}}

\newcommand{\Z}{\mathbb{Z}}
\newcommand{\N}{\mathbb{N}}

\def\bs{\boldsymbol}

\newcommand{\PEfont}{\mathrm}

\newcommand{\p}{\ensuremath{\PEfont P}}

\DeclareMathOperator{\e}{\ensuremath{\PEfont E}}

\newcommand{\E}{\e}
\renewcommand{\P}{\p}
\newcommand{\Cov}{\mathrm{Cov}}

\DeclareMathOperator{\bbvar}{\ensuremath{\mathbb{V}ar}}

\newcommand{\ind}{\mathds{1}}

\newcommand{\eps}{\varepsilon}
\renewcommand{\epsilon}{\varepsilon}
\renewcommand{\theta}{\vartheta}
\renewcommand{\rho}{\varrho}


\newenvironment{myenumerate}{%
\renewcommand{\theenumi}{\arabic{enumi}}%
\renewcommand{\labelenumi}{{\rm(\theenumi)}}%
\begin{list}{\labelenumi}
	{%
	\setlength{\itemsep}{0.4em}%
	\setlength{\topsep}{0.5em}%
	\setlength\leftmargin{2.45em}%
	\setlength\labelwidth{2.05em}%
	\setlength{\labelsep}{0.4em}%
	\usecounter{enumi}%
	}%
	}%
{\end{list}
}

{\end{list}
}

{\end{list}
}

{\end{myenumerate}}

\newenvironment{myitemize}{%
\begin{list}{$\bullet$}%
 	{%
	\setlength{\itemsep}{0.4em}%
	\setlength{\topsep}{0.5em}%
	\setlength\leftmargin{2.65em}%
	\setlength\labelwidth{2.65em}%
	\setlength{\labelsep}{0.4em}%
	}%
	}%
{\end{list}}

\renewenvironment{itemize}{
\begin{myitemize}}%
{\end{myitemize}}


\MakePerPage[2]{footnote} 



\date{\today}

\newcommand\dd{\mathrm{d}}

\newcommand\sfa{\mathsf a}
\newcommand\sfc{\mathsf c}
\newcommand\even{\mathrm{even}}


\begin{document}

\title[The Dickman subordinator, renewal theorems, and disordered systems]{
The Dickman subordinator, renewal theorems,\\ and disordered systems}

\begin{abstract}
We consider the so-called Dickman subordinator,
whose L\'evy measure has density
$\frac{1}{x}$ restricted to the interval $(0,1)$.
The marginal density of this process,
known as the Dickman function, appears
in many areas of mathematics, from number theory to combinatorics.
In this paper, we study renewal processes in the domain of attraction
of the Dickman subordinator, for which we prove local renewal theorems. 
We then present applications to marginally
relevant disordered systems, such as pinning and directed
polymer models, and prove sharp second moment estimates
on their partition functions.
\end{abstract}

\author[F. Caravenna]{Francesco Caravenna}
\address{Dipartimento di Matematica e Applicazioni\\
 Universit\`a degli Studi di Milano-Bicocca\\
 via Cozzi 55, 20125 Milano, Italy}
\email{francesco.caravenna@unimib.it}

\author[R. Sun]{Rongfeng Sun}
\address{Department of Mathematics\\
National University of Singapore\\
10 Lower Kent Ridge Road, 119076 Singapore
}
\email{matsr@nus.edu.sg}

\author[N. Zygouras]{Nikos Zygouras}
\address{Department of Statistics\\
University of Warwick\\
Coventry CV4 7AL, UK}
\email{N.Zygouras@warwick.ac.uk}

\date{\today}

\keywords{Dickman Subordinator, Dickman Function, 
Renewal Process, Levy Process, 
Renewal Theorem, Stable Process, Disordered System, Pinning Model,
Directed Polymer Model}
\subjclass[2010]{Primary: 60K05; Secondary: 82B44, 60G51}

\maketitle

\section{Introduction and main results}

\subsection{Motivation}
We consider the subordinator (increasing L\'evy process)
denoted by $Y=(Y_s)_{s\geq 0}$, which is pure jump with
 L\'evy measure
\begin{equation} \label{eq:bsLevymeasure}
	\nu(\dd t) := \frac{1}{t} \, \ind_{(0,1)}(t)  \, \dd t  \,.
\end{equation}
Equivalently, its Laplace transform is given by
\begin{equation}\label{eq:LapY}
	\E[e^{\lambda  Y_s}] =
	\exp \bigg\{s \, \int_{0}^1
	(e^{\lambda t } - 1) \, \frac{\dd t}{t}
	 \bigg\} \,.
\end{equation}
We call $Y$ \emph{the Dickman subordinator} (see Remark~\ref{rem:Dickman} below).
It is suggestive to view it
as a ``truncated $0$-stable subordinator'',
by analogy with the well known $\alpha$-stable subordinator
whose L\'evy 
measure is $\frac{1}{t^{1+\alpha}} \ind_{(0,\infty)}(t) \, \dd t$,
for $\alpha \in (0,1)$.
In our case $\alpha = 0$ and the restriction $\ind_{(0,1)}(t)$ in \eqref{eq:bsLevymeasure}
ensures that $\nu$ is a legitimate L\'evy measure,
i.e.\ $\int_\R (t^2 \wedge 1) \, \nu(\dd t) < \infty$.

\smallskip

Interestingly, the Dickman subordinator admits an
explicit marginal density
\begin{equation} \label{eq:fst}
	f_s(t)\,  := \frac{\P(Y_s \in \dd t)}{\dd t} \,, \qquad
	\text{for } s, t \in (0,\infty) \,,
\end{equation}
which we recall in the following result. 

\begin{theorem}[Density of the Dickman subordinator] \label{th:scalingY}
For all $s \in (0,\infty)$ one has
\begin{equation}\label{eq:scalingf}
	 f_s(t) =  \left\{
	 \begin{aligned}
	 & \frac{s\, t^{s-1} \, e^{-\gamma \, s}}{\Gamma(s+1)} \quad \quad & \mbox{for } t\in (0, 1], \\
	 & \frac{s\, t^{s-1}e^{-\gamma s}}{\Gamma(s+1)} - st^{s-1} \int_0^{t-1} \frac{f_s(a)}{(1+a)^s} \dd a \quad \quad & \mbox{for } t\in (1, \infty),
	 \end{aligned}
	 \right.
\end{equation}
where $\Gamma(\cdot)$ denotes Euler's gamma function
and $\gamma = -\int_0^\infty \log u \, e^{-u} \, \dd u \simeq 0.577$
is the Euler-Mascheroni constant.
\end{theorem}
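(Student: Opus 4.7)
The plan is to derive \eqref{eq:scalingf} from the Laplace transform \eqref{eq:LapY} via a delay-differential equation. Writing $\hat f_s(\lambda) := \E[e^{-\lambda Y_s}]$ (i.e.\ \eqref{eq:LapY} with $\lambda$ replaced by $-\lambda$) and differentiating $\log \hat f_s(\lambda)$, one finds
\[
    \lambda\,\hat f_s'(\lambda) + s\,\hat f_s(\lambda) \,=\, s\,e^{-\lambda}\,\hat f_s(\lambda)\,.
\]
I would then invert the Laplace transform termwise: an integration by parts rewrites $\lambda\hat f_s'(\lambda)$ as the transform of $-(t f_s(t))'$, while $e^{-\lambda}\hat f_s(\lambda)$ is the transform of $f_s(t-1)\ind_{t>1}$. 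By uniqueness of Laplace transforms the density thus satisfies
\[
    t f_s'(t) \,=\, (s-1)\,f_s(t) - s\,f_s(t-1)\,\ind_{t>1}\,.
\]

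On $(0,1]$ this reduces to the linear ODE $t f_s'(t) = (s-1)f_s(t)$, whose solutions are $f_s(t) = C t^{s-1}$. To identify $C$ I would match Laplace asymptotics as $\lambda \to \infty$. The substitution $u=\lambda t$ in \eqref{eq:LapY} combined with the classical representation $\gamma = \int_0^1 \frac{1-e^{-u}}{u}\,\dd u - \int_1^\infty \frac{e^{-u}}{u}\,\dd u$ yields $\int_0^1 (e^{-\lambda t}-1)\frac{\dd t}{t} = -\log\lambda - \gamma + o(1)$, so $\hat f_s(\lambda) \sim e^{-\gamma s}\lambda^{-s}$. On the other hand, $Y_s$ has bounded jumps, hence exponentially decaying right tail, so the contribution of $(1,\infty)$ to the transform is exponentially small and $\hat f_s(\lambda) \sim C\int_0^\infty t^{s-1}e^{-\lambda t}\,\dd t = C\,\Gamma(s)\,\lambda^{-s}$. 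Matching gives $C = e^{-\gamma s}/\Gamma(s) = s e^{-\gamma s}/\Gamma(s+1)$, which is exactly the first branch of \eqref{eq:scalingf}.

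For $t>1$ I would set $u(t):=f_s(t)/t^{s-1}$; the delay equation becomes $u'(t) = -s\,f_s(t-1)/t^s$. Integrating from $1$ to $t$ with the boundary value $u(1^+)=C$ inherited by continuity from the $(0,1]$ branch, and changing variables $a = \tau - 1$, yields
\[
    \frac{f_s(t)}{t^{s-1}} \,=\, \frac{s\,e^{-\gamma s}}{\Gamma(s+1)} - s \int_0^{t-1} \frac{f_s(a)}{(1+a)^s}\,\dd a\,,
\]
which is the second branch of \eqref{eq:scalingf}.

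The main obstacle is to justify the Laplace inversion rigorously, i.e.\ to ensure that $Y_s$ possesses a continuous density with enough regularity and decay that the transform identity can be read pointwise. A clean workaround is to reverse the direction: \emph{define} $\tilde f_s(t)$ by the right-hand side of \eqref{eq:scalingf} (the integral equation on $(1,\infty)$ defines $\tilde f_s$ uniquely by Picard iteration, as it is a Volterra equation with locally bounded kernel), compute its Laplace transform by unfolding the above calculation in reverse, and appeal to uniqueness of the Laplace transform. A minor subtlety near $t=1$, where $f_s'$ has a power-type singularity when $s\leq 1$, is automatically handled by the integral form in \eqref{eq:scalingf}.
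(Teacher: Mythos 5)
Your proposal is correct in substance but follows a genuinely different route from the paper. The paper offers two derivations: (i) a short one quoting general results on self-decomposable L\'evy processes (Sato, Lemma~53.2 and Theorem~53.6), and (ii) a self-contained probabilistic argument in Appendix~\ref{sec:Dickman}, built on the scale invariance $\P(Y_s/t\in\cdot\,|\,M_s<t)=\P(Y_s\in\cdot)$ (Proposition~\ref{th:scale}, via the PPP representation), the Dirichlet-distribution convolution trick to identify the constant $e^{-\gamma s}/\Gamma(s+1)$, and a decomposition over the largest jump $M_s$ to get the integral equation for $t>1$. You instead differentiate the log-Laplace transform to obtain the first-order linear ODE $\lambda\hat f_s'(\lambda)+s\hat f_s(\lambda)=se^{-\lambda}\hat f_s(\lambda)$, invert it to the delay equation $tf_s'(t)=(s-1)f_s(t)-sf_s(t-1)\ind_{t>1}$, solve explicitly on $(0,1]$, and fix the constant by matching the $\lambda\to\infty$ asymptotics $\hat f_s(\lambda)\sim e^{-\gamma s}\lambda^{-s}$ against $C\,\Gamma(s)\lambda^{-s}$. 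This is essentially the classical analytic derivation of the Dickman differential-delay equation, and it makes the link to the defining relation \eqref{eq:Dickman} transparent, which the paper's probabilistic proof does not. What the paper's argument buys is that it never needs to presuppose regularity of $f_s$: scale invariance directly produces $\P(Y_s\le t)=t^s\P(Y_s\le 1)$ and hence a density, whereas your forward Laplace inversion needs $f_s$ to be (piecewise) $C^1$ with enough decay, a fact that is not a priori available. You correctly flag this and propose the standard fix (define $\tilde f_s$ by the Volterra recursion, reverse the computation, and invoke uniqueness of Laplace transforms); to make that fix airtight you would still have to supply (a) absolute integrability and sub-Gaussian decay of $\tilde f_s$ so that $\hat{\tilde f}_s(\lambda)$ exists for all $\lambda$, (b) piecewise smoothness of $\tilde f_s$ (clear from the Volterra recursion by bootstrap), and (c) the matching of the one free constant in the first-order ODE, e.g.\ via the $\lambda\to\infty$ asymptotics with the $t>1$ part shown to be $O(e^{-\lambda})$, which requires the bound in (a). None of these is difficult, but they are the analogue of what the paper handles through Lemma~\ref{th:prellem} and the conditioning on $M_s$.
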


Theorem~\ref{th:scalingY} follows
from general results about \emph{self-decomposable L\'evy processes}
\cite{cf:Sato}.\footnote{We thank Thomas Simon for pointing out this connection.}
We give the details in Appendix~\ref{sec:Dickman},
where we also present an alternative, self-contained 
derivation of the density $f_s(t)$, based
on direct probabilistic arguments.
We refer to \cite{BKKK14}
for further examples of subordinators with explicit densities.

\begin{remark}[Dickman function and Dickman distribution]\label{rem:Dickman}
The function
\begin{equation*}
	\rho(t) := e^\gamma\, f_1(t)
\end{equation*}
is known
as the \emph{Dickman function}
and plays an important role in number theory and combinatorics \cite{cf:Ten,ABT}.
By \eqref{eq:scalingf} we see that $\rho$ satisfies
\begin{equation}\label{eq:Dickman}
	\rho(t) \equiv 1 \quad \text{for } \ t \in (0,1] \,, \qquad
	\qquad t \, \rho'(t) + \rho(t-1) = 0 \quad \text{for } \ t \in (1,\infty) \,,
\end{equation}
which is the classical
definition of the Dickman function. Examples where $\rho$ emerges are:
\begin{itemize}
\item If $X_n$ denotes the largest prime factor of a uniformly chosen integer in $\{1,\ldots, n\}$, 
then $\lim_{n\to\infty} \P(X_n \le n^t) = \rho(1/t)$ \cite{Dickman}. 

\item If $Y_n$ denotes the size of the longest cycle in a uniformly chosen
permutation of $n$ elements, then
$\lim_{n\to\infty} \P(Y_n \le n t) = \rho(1/t)$ \cite{Kingman}.
\end{itemize}
Thus both $(\log X_n / \log n)$ and $(Y_n / n)$ converge in law as $n\to\infty$
to a random variable $L_1$ with $\P(L_1 \le t) = \rho(1/t)$.
The density of $L_1$ equals
$t^{-1}  \rho(t^{-1} - 1)$, by \eqref{eq:Dickman}.

\smallskip

The marginal law $Y_1$ of  our subordinator,
called the \emph{Dickman distribution} in the literature,
also arises in many contexts, 
from logarithmic combinatorial structures \cite[Theorem~4.6]{ABT}
to theoretical computer science \cite{HT}.
We stress that $Y_1$ and $L_1$ are different
-- their laws are supported in $(0,\infty)$ and $(0,1)$, respectively --
though both are  related to the Dickman function: their
densities are $e^{-\gamma} \rho(t)$ and $t^{-1} \rho(t^{-1}-1)$,
respectively
\end{remark}

In this paper, we present a novel application of the Dickman subordinator 
in the context of \emph{disordered systems},
such as pinning and directed polymer models.
We will discuss the details in Section~\ref{sec:main3},
but let us give here the crux of the problem in an elementary way, 
which can naturally arise in various other settings. 

Given $q,r \in (0,\infty)$, let us consider the 
weighted series of convolutions
\begin{equation} \label{eq:uN}
	v_N :=\sum_{k = 1}^\infty \, q^k \, \sum_{0 < n_1 < n_2 < \ldots
	< n_k \le N}
	\, \frac{1}{n_1^r (n_2 - n_1)^r \cdots (n_k - n_{k-1})^r} \,.
\end{equation}
We are interested in the following question: for a fixed
exponent $r\in (0,\infty)$,
\emph{can one choose $q = q_N$ so that $v_N$ converges
to a non-zero and finite limit limit as $N\to\infty$}, i.e.\ $v_N \to v \in (0,\infty)$?
The answer naturally depends on the exponent $r$.

\smallskip

If $r < 1$, we can, straightforwardly, use a Riemann sum approximation 
and by choosing $q = \lambda N^{-1+r} $, for fixed $ \lambda \in (0,\infty)$, we have that $v_N$ will converge to
\begin{equation}
\begin{split}
	v & \,:=\,  \sum_{k=1}^\infty \, 
	\lambda^k \ \Bigg\{ \ \,
	\idotsint\limits_{0 < t_1 < \ldots < t_{k} < 1} \
	\frac{\dd t_1 \cdots \dd t_k}
	{t_1^r (t_2 - t_1)^r \cdots (t_k - t_{k-1})^r} \Bigg\}  
	\,=\, \sum_{k=1}^\infty \lambda^k
	\, \frac{\Gamma(r)^{k+1}}{\Gamma((k+1)r)}
\end{split}
\end{equation}
where the last equality is deduced from the normalization of the Dirichlet distribution.

\smallskip

 If $r \geq 1$, then, as it is readily seen, the Riemann sum 
approach fails, as it leads to iterated integrals which are
 infinite. The idea now is to express the series
\eqref{eq:uN} as a renewal function. The case $r>1$ is easy:
 we can take a  small, but \emph{fixed} $q > 0$, more precisely
\begin{equation*}
	q \in \bigg(0, \frac{1}{R}\bigg) \,, \qquad \text{where} \qquad R :=
	\sum_{n\in\N} \frac{1}{n^r} \ \in (0, \infty) \,,
\end{equation*}
and consider the renewal process $\tau = (\tau_k)_{k\ge 0}$ with inter-arrival law $\P(\tau_1 = n)= \frac{1}{R} \, \frac{1}{n^r}$ for $n\in\N$. 
We can then write
\begin{equation*}
	v_N = \sum_{k=1}^\infty
	\big(q R \big)^k \, \P(\tau_k \le N) 
	\ \xrightarrow[\, N\to\infty \, ]{} \
	v := \frac{q R}{1- q R} \ \in (0,\infty) \,.
\end{equation*}

\smallskip

The case $r = 1$ is more interesting\footnote{It can be called \emph{marginal}
or \emph{critical}, due to its relations to disordered
systems, see \cite{CSZ17b} for the relevant terminology and 
statistical mechanics background.}.
 This case is subtle because the normalization $R=\sum_{n\in\N} \frac{1}{n} =\infty$.
The way around this problem  is to first normalize $\frac{1}{n}$ to a probability
on $\{1,2,\ldots, N\}$. More precisely, we take
\begin{equation*}
	R_N := \sum_{n=1}^N \frac{1}{n} = \log N \, \big(1+o(1)\big) \,,
\end{equation*}
and consider the renewal process
$\tau^{(N)} = (\tau^{(N)}_k)_{k\ge 0}$ with inter-arrival law
\begin{equation} \label{eq:tauN0}
	\P\big( \tau^{(N)}_1=n \big)
	= \frac{1}{R_N}\, \frac{1}{n} \qquad 
	\text{for } n \in \{1,2,\ldots, N\} \,.
\end{equation}
Note that this renewal process is a discrete analogue 
of the Dickman subordinator.
Choosing $q = \lambda / R_N$, with $\lambda<1$, we can see, via dominated convergence, that
\begin{equation} \label{eq:vNas}
	v_N = \sum_{k=1}^\infty \lambda^k \, \P(\tau^{(N)}_k \le N)
	\ \xrightarrow[\, N\to\infty \, ]{} \
	v := \frac{\lambda}{1- \lambda} \ \in (0,\infty) \,
\end{equation}
because $\P(\tau^{(N)}_k \le N) \to 1$ as $N\to\infty$, for any fixed $k\in\N$. But when
$\lambda = 1$, then $v_N \to \infty$ and then finer questions emerge, e.g., 
at which rate does $v_N \to \infty$? 
Or what happens if
instead of $\P(\tau^{(N)}_k \le N)$ we consider $\P(\tau^{(N)}_k = N)$ in \eqref{eq:vNas}, i.e.\ if we fix
$n_k = N$ in \eqref{eq:uN}?

\medskip

To answer these questions,
it is necessary to explore the domain of attraction of the 
Dickman subordinator --- to which $\tau^{(N)}$ belongs,
as we show below --- and to 
prove renewal theorems. Indeed, the left hand side of \eqref{eq:vNas}
for $\lambda = 1$ defines the \emph{renewal measure of $\tau^{(N)}$}.
Establishing results of this type is the core of our paper.

\smallskip

\subsection{Main results}

We study a class of renewal processes $\tau^{(N)}$ which generalize \eqref{eq:tauN0}.
Let us fix a sequence $(r(n))_{n\in\N}$ such that
\begin{align} \label{eq:rn}
	r(n) &:= \frac{\sfa}{n} (1+o(1)) \qquad \text{as } n \to \infty \,, 
\end{align}
for some constant $\sfa \in (0,\infty)$, so that
\begin{align} 
	\label{eq:RN}
	R_N &:= \sum_{n=1}^N r(n) = \sfa \, \log N (1+o(1)) \qquad \text{as } N \to \infty \,.
\end{align}
For each $N\in\N$, we consider i.i.d.\ random variables $(T^{(N)}_i)_{i\in\N}$ with distribution
\begin{equation} \label{eq:XN}
	\P(T^{(N)}_i = n) := \frac{r(n)}{R_N} \, \ind_{\{1,\ldots, N\}}(n) \,.
\end{equation}
(The precise value of the constant $\sfa$ is immaterial, since it gets simplified in \eqref{eq:XN}.)

Let $\tau^{(N)} = (\tau^{(N)}_k)_{k \in \N_0}$ denote the associated random walk (renewal process):
\begin{equation} \label{eq:deftau}
	\tau^{(N)}_0 := 0 \,, \qquad \tau^{(N)}_k := \sum_{i=1}^k T^{(N)}_i \,.
\end{equation}
We first show that $\tau^{(N)}$ is in the domain of attraction
of the Dickman subordinator $Y$.

\begin{proposition}[Convergence of rescaled renewal process]\label{prop:YN0}
The rescaled process
\begin{equation*}
	\Bigg( \frac{\tau^{(N)}_{\lfloor s \log N \rfloor}}{N} \Bigg)_{s\geq 0}
\end{equation*}
converges in distribution to the Dickman subordinator $(Y_s)_{s\geq 0}$, as $N\to\infty$.
\end{proposition}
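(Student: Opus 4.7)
The plan is to prove convergence of Laplace transforms and then upgrade to process-level convergence using the L\'evy/independent-increment structure.

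\textbf{Step 1: Laplace transform of a single (rescaled) step.} For $\lambda\ge 0$, I would compute
\begin{equation*}
1 - \E\!\big[e^{-\lambda T^{(N)}_1/N}\big]
= \frac{1}{R_N}\sum_{n=1}^N r(n)\,\bigl(1-e^{-\lambda n/N}\bigr).
\end{equation*}
Using $r(n)=\sfa/n+o(1/n)$, the sum on the right becomes
$\sfa\sum_{n=1}^N \tfrac{1}{n}(1-e^{-\lambda n/N})$ up to a vanishing remainder. Writing $\tfrac{1}{n}=\tfrac{1}{N}\cdot\tfrac{N}{n}$, this is a Riemann sum for the continuous function $t\mapsto (1-e^{-\lambda t})/t$ on $(0,1]$ (which extends continuously to $0$ with value $\lambda$), so it converges to $\sfa\int_0^1 (1-e^{-\lambda t})\,\tfrac{\dd t}{t}$. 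Combining with $R_N=\sfa\log N\,(1+o(1))$ yields
\begin{equation*}
1-\E\!\big[e^{-\lambda T^{(N)}_1/N}\big]
=\frac{1}{\log N}\Bigl[\,\Psi(\lambda)+o(1)\,\Bigr],
\qquad \Psi(\lambda):=\int_0^1 \frac{1-e^{-\lambda t}}{t}\,\dd t.
\end{equation*}
The control of the remainder uses only boundedness of $(1-e^{-\lambda t})/t$ on $(0,1]$ together with $\sum_{n\le N}(r(n)-\sfa/n)=o(\log N)$ from \eqref{eq:rn}; this is the one routine but slightly delicate bookkeeping step.

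\textbf{Step 2: One-dimensional marginals.} Since the $T^{(N)}_i$ are i.i.d., for $k_N:=\lfloor s\log N\rfloor$,
\begin{equation*}
\E\!\big[e^{-\lambda \tau^{(N)}_{k_N}/N}\big]
=\Bigl(\E\!\big[e^{-\lambda T^{(N)}_1/N}\big]\Bigr)^{k_N}
=\Bigl(1-\tfrac{\Psi(\lambda)+o(1)}{\log N}\Bigr)^{\lfloor s\log N\rfloor}
\ \xrightarrow[N\to\infty]{}\ e^{-s\,\Psi(\lambda)}.
\end{equation*}
Comparing with \eqref{eq:LapY} (taking $\lambda\mapsto-\lambda$ there) shows the limit equals $\E[e^{-\lambda Y_s}]$. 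By the continuity theorem for Laplace transforms on $[0,\infty)$, $\tau^{(N)}_{\lfloor s\log N\rfloor}/N$ converges in distribution to $Y_s$ for each fixed $s\ge 0$.

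\textbf{Step 3: Finite-dimensional distributions.} Fix $0=s_0<s_1<\cdots<s_m$. The increments $\tau^{(N)}_{\lfloor s_j\log N\rfloor}-\tau^{(N)}_{\lfloor s_{j-1}\log N\rfloor}$ are independent (being disjoint partial sums of i.i.d.\ variables) and, by exactly the computation in Step 2 applied with $s_j-s_{j-1}$ in place of $s$, each converges in law to $Y_{s_j}-Y_{s_{j-1}}$. Since the increments of $Y$ are also independent, the joint Laplace transforms factor and convergence of finite-dimensional distributions on $[0,\infty)$ follows.

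\textbf{Step 4: Tightness and path convergence.} The rescaled processes $s\mapsto \tau^{(N)}_{\lfloor s\log N\rfloor}/N$ are nondecreasing c\`adl\`ag, and so is the limit $Y$. For nondecreasing processes with continuous-in-probability limits (which $Y$ is, being a L\'evy process), convergence of finite-dimensional distributions on a dense set together with tightness of the value at a fixed terminal time $s=T$ (immediate from Step 2) is enough to upgrade to convergence in law in the Skorokhod space $D([0,\infty))$; see e.g.\ the standard criteria for monotone processes, or invoke Jacod--Shiryaev, Thm.~VII.3.4 on convergence of PII to a L\'evy limit, whose hypotheses reduce exactly to the Laplace-transform asymptotics established in Step 1. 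The main obstacle in this whole argument is really Step 1 -- identifying the correct scaling $k=s\log N$ and keeping the error terms in \eqref{eq:rn} under control so that the Riemann-sum limit produces precisely the L\'evy symbol of $Y$; the remaining steps are then structural.
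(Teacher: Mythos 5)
Your argument is correct, but it is precisely the alternative route the paper explicitly acknowledges and then bypasses. After reducing to one-dimensional marginals via independent, stationary increments, the paper remarks that the convergence ``could be proved by checking the convergence of Laplace transforms'' and instead gives ``a more direct proof'' via a Poisson point process construction: it truncates the increments at level $\epsilon N$, shows the surviving rescaled jumps converge as a point process on $[\epsilon,1]$ to a PPP with intensity $s\,\tfrac{\dd t}{t}$ (matching first the number of points, Binomial $\to$ Poisson, then the law of each point), and controls the discarded small increments by an $L^1$ bound uniform in $N$. Your Steps~1--3 replace all of this with a single Laplace-symbol computation and exponentiation, which is shorter for the temporal marginal. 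One small remark on your Step~1 bookkeeping: the ingredient that actually makes the remainder $o(1/\log N)$ --- and not merely $o(1)$, which would be swamped by the main $\Theta(1/\log N)$ term --- is the bound $|1-e^{-\lambda n/N}|\le K_\lambda\,n/N$ (i.e.\ boundedness of $(1-e^{-\lambda t})/t$, as you say) combined with $n\,(r(n)-\sfa/n)=o(1)$ and a Ces\`aro average; the weaker fact $\sum_{n\le N}(r(n)-\sfa/n)=o(\log N)$ that you cite does not by itself close this gap. What the paper's PPP approach buys is a painless upgrade to the space-time version in Proposition~\ref{prop:YN01}: once the jump times and sizes of $\tau^{(N)}$ converge as a point process, the conditional local CLT hypothesis \eqref{eq:proppnx} on the spatial kernels directly delivers the conditionally Gaussian structure of $\bsY^\sfc_s$, with no joint Laplace--Fourier computation; your route would need a separate such analysis there. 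Finally, note that the paper interprets the proposition as convergence of finite-dimensional distributions only and explicitly omits the Skorokhod upgrade, so your Step~4 is extra (and acceptably sketchy given it is not required).
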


We then define an \emph{exponentially weighted renewal density} $U_{N,\lambda}(n)$
for $\tau^{(N)}$, which is a local version of the quantity which appears
in \eqref{eq:vNas}:
\begin{equation} \label{UN}
	U_{N,\lambda}(n) := \sum_{k\ge 0} \lambda^k \,
	\P(\tau^{(N)}_k=n) \qquad  \text{for} \quad N, n \in \N, 
	\ \lambda\in (0,\infty) \,.
\end{equation}
We similarly define the corresponding quantity for the
Dickman subordinator:
\begin{equation} \label{eq:G0}
	G_{\theta}(t) := \int_0^\infty 
	e^{\theta s} \, f_s(t) \, \dd s
	\qquad \text{for} \quad t \in (0,\infty) \,, \
	\theta \in \R \,,
\end{equation}
which becomes more explicit for $t \in (0,1]$, by \eqref{eq:scalingf}:
\begin{equation} \label{eq:G0+}
	G_{\theta}(t) = \int_0^\infty  \frac{ e^{(\theta - \gamma) s} \,  
	s \, t^{s-1}}{\Gamma(s+1)} \, \dd s 
	\qquad \text{for} \quad t \in (0,1] \,, \
	\theta \in \R \,.
\end{equation}

Our main result identifies
the asymptotic behavior of the renewal density $U_{N,\lambda}(n)$
for large $N$ and $n = O(N)$. 
This is shown to be of the order
 $\E[T_1^{(N)}]^{-1} \sim (\frac{N}{\log N})^{-1}$,
in analogy with the classical renewal theorem,
with a sharp prefactor given by $G_\theta(\frac{n}{N})$.

\begin{theorem}[Sharp renewal theorem]\label{th:WR}
Fix any $\theta \in \R$ and let $(\lambda_N)_{N\in\N}$ satisfy
\begin{equation}\label{eq:lambdaN}
	\lambda_N = 1 + \frac{\theta}{\log N} \big(1+o(1)\big) \qquad
	\text{as } N \to \infty \,.
\end{equation}
For any fixed
$0 < \delta < T < \infty$, the following relation holds as $N \to \infty$:
\begin{equation} \label{eq:UNas}
	U_{N,\lambda_N}(n) = \frac{\log N}{N} \, G_{\theta}(\tfrac{n}{N}) \, (1+o(1)) \,,
	\quad \ \ 
	\text{uniformly for } \ \delta N \le n \le T N \,.
\end{equation}
Moreover, for any fixed $T < \infty$,
the following uniform bound holds, for a suitable $C \in (0,\infty)$:
\begin{equation} \label{eq:unibo}
	U_{N,\lambda_N}(n) \le C \, \frac{\log N}{N} \, G_{\theta}(\tfrac{n}{N}) \, ,
	\qquad \forall 0 < n \le T N \,.
\end{equation}
\end{theorem}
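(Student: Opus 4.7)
The plan is to reduce the sharp renewal theorem to a uniform \emph{local} limit theorem for $\tau^{(N)}_k$ in the bulk regime $k \asymp \log N$, and then perform a Riemann sum approximation in the series defining $U_{N,\lambda_N}(n)$. Specifically, I aim to prove the pointwise local statement
\begin{equation*}
\P(\tau^{(N)}_k = n) \;=\; \frac{1}{N}\, f_{k/\log N}(n/N)\, (1+o(1))
\end{equation*}
uniformly for $(k/\log N,\, n/N)$ in compact subsets of $(0,\infty)\times(0,\infty)$. Combining this with the consequence $\lambda_N^k = e^{\theta k/\log N}(1+o(1))$ of \eqref{eq:lambdaN} for $k = O(\log N)$, the contribution of indices $k \in [\eps \log N,\, \eps^{-1}\log N]$ to $U_{N,\lambda_N}(n)$ becomes a Riemann sum with mesh $\Delta s = 1/\log N$ that approximates $\frac{\log N}{N}\int_\eps^{1/\eps} e^{\theta s} f_s(n/N)\,\dd s$, which in turn tends to $\frac{\log N}{N}\, G_\theta(n/N)$ as $\eps\downarrow 0$ by \eqref{eq:G0}. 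Uniformity in $n/N \in [\delta, T]$ is preserved throughout.

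The local limit theorem itself I will obtain via Fourier inversion. Set $\phi_N(\xi) := \E[e^{i\xi T^{(N)}_1}]$ and rescale $\xi = \eta/N$. Using $r(n) \sim \sfa/n$ and \eqref{eq:RN}, a summation-by-parts argument gives
\begin{equation*}
\phi_N(\eta/N) \;=\; 1 + \frac{1}{R_N}\sum_{n=1}^N r(n)\bigl(e^{i\eta n/N}-1\bigr) \;=\; 1 + \frac{\Psi(\eta)}{\log N}\,(1+o(1)),
\end{equation*}
where $\Psi(\eta) := \int_0^1 (e^{i\eta u}-1)\,\frac{\dd u}{u}$ is the characteristic exponent of $Y_1$ read off from \eqref{eq:LapY}. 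Hence $\phi_N(\eta/N)^k \to e^{s\Psi(\eta)}$ for $k = s\log N$, and the inversion formula
\begin{equation*}
\P(\tau^{(N)}_k = n) \;=\; \frac{1}{2\pi N}\int_{-\pi N}^{\pi N}\phi_N(\eta/N)^k\, e^{-i\eta n/N}\,\dd \eta
\end{equation*}
converges, after truncating $|\eta|$ large, to $\frac{1}{2\pi N}\int_\R e^{s\Psi(\eta)}e^{-it\eta}\,\dd\eta = f_s(t)/N$ by Fourier inversion of the density of $Y_s$. The truncation is legitimate because on $|\eta|\in[M,\pi N]$ one has $|\phi_N(\eta/N)| \le 1 - c/\log N$, so $|\phi_N(\eta/N)|^k$ is uniformly negligible for $k \gtrsim \log N$.

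For the uniform bound \eqref{eq:unibo}, a quantitative version of the Fourier estimate delivers a pointwise majorisation $\P(\tau^{(N)}_k = n) \le C\, f_{k/\log N}(n/N)/N$ for $n \le TN$ and $k \ge 1$; summing against $\lambda_N^k$ then yields \eqref{eq:unibo} immediately. The regime $n/N$ small is slack because $G_\theta(n/N)$ diverges at $0$ (from \eqref{eq:G0+} and the factor $t^{s-1}$), while the regime $n \asymp N$ uses uniform boundedness of $f_s$ on compacts in $s$ bounded away from zero.

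The main obstacle is the uniformity of the local limit theorem and the control of the small-$k$ and large-$k$ tails of the defining series. The large-$k$ range is the hardest: one needs a tail estimate on $\P(\tau^{(N)}_k \le TN)$ strong enough to absorb the weight $\lambda_N^k \le e^{|\theta|k/\log N}(1+o(1))$, and I expect to derive this by exponential tilting \emph{à la} Cramér, exploiting that under a suitable tilt the mean of $T^{(N)}_1$ is reduced by a factor comparable to $1/\log N$, pushing $\tau^{(N)}_k$ away from $[0,TN]$ at an exponential rate in $k/\log N$. The small-$k$ contribution is handled more directly, by observing that $f_s(t) = O(s)$ as $s \to 0$ for fixed $t > 0$ (cf.\ \eqref{eq:scalingf}), so that the sum over $k \le \eps\log N$ contributes only $O(\eps^2)\cdot \frac{\log N}{N}$ and vanishes on letting $\eps \downarrow 0$.
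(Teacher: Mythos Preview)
Your overall scheme --- local limit theorem plus Riemann sum plus tail control --- is reasonable, and the large-$k$ tail via exponential tilting is indeed what the paper does (Lemma~\ref{markov2}). But the Fourier-inversion step has a real gap in the regime $s := k/\log N \le 1$. The limiting density $f_s(t) = s\,t^{s-1}e^{-\gamma s}/\Gamma(s+1)$ blows up at $t=0$ for $s<1$, so $e^{s\Psi(\cdot)}\notin L^1(\R)$: since $\mathrm{Re}\,\Psi(\eta)\sim -\log|\eta|$, one has $|e^{s\Psi(\eta)}|\asymp|\eta|^{-s}$. Your truncation bound $|\phi_N(\eta/N)|\le 1-c/\log N$ yields only $|\phi_N|^k\le e^{-cs}$, a constant in $\eta$, so $\int_M^{\pi N}|\phi_N|^k\,\dd\eta$ is of order $N$ and certainly not $o(1)$. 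Even the sharper (and correct) estimate $|\phi_N(\eta/N)|\lesssim \eta^{-1/\log N}$ gives $|\phi_N|^k\lesssim\eta^{-s}$, still not integrable for $s<1$. Hence the Fourier tail cannot be dropped, and the whole interval $k\in[\eps\log N,\log N]$ --- which contributes nontrivially to $G_\theta$ --- is uncovered. The same obstruction undermines the claimed uniform majorisation $\P(\tau^{(N)}_k=n)\le C f_{k/\log N}(n/N)/N$: for small $s$ this bound is sharp precisely where Fourier inversion fails.

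The paper sidesteps Fourier analysis entirely. For \eqref{eq:UNas} it first establishes only the \emph{vague} convergence of the measures $\frac{1}{\log N}\sum_n U_{N,\lambda_N}(n)\,\delta_{n/N}$ to $G_\theta(t)\,\dd t$ (an easy consequence of Proposition~\ref{prop:YN0}), and then upgrades to the local statement by a renewal identity: decomposing according to the jump that straddles level $n/2$ writes $U_{N,\lambda}(n)$ as a bilinear expression in $U_{N,\lambda}(\ell)$, $U_{N,\lambda}(n-m)$ and the single-step kernel $\P(T_1^{(N)}=m-\ell)$, see \eqref{eq:Uren}--\eqref{eq:Gren}. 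This representation involves only averaged quantities, so weak convergence suffices to pass to the limit. For \eqref{eq:unibo} the paper uses Proposition~\ref{prop:sharp}, a purely probabilistic local estimate obtained by isolating the largest increment, which produces the super-exponential factor $e^{-cs\log^+(cs)}$ needed to dominate $\lambda_N^k$; this decay is not visible from characteristic-function bounds.
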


As anticipated, we will present an application to disordered systems
in Section~\ref{sec:main3}: for
pinning and directed polymer models,
we derive the sharp asymptotic
behavior of the second moment of the partition function
in the weak disorder regime (see Theorems~\ref{th:pinning}
and~\ref{sec:dpre}).

\smallskip

We stress that Theorem~\ref{th:WR}
extends the literature on \emph{renewal theorems
in the case of infinite mean}. 
Typically, the cases studied in the literature correspond to 
renewal processes of the form $\tau_n= T_1+\cdots +T_n $, 
where the i.i.d.\ increments $(T_i)_{i\geq 1}$ have law
\begin{align}\label{regular}
\P(T_1=n)=\phi(n)\, n^{-(1+\alpha)},
\end{align}
with $\phi(\cdot)$ a slowly varying function. 
In case $\alpha \in (0,1]$, limit theorems for the
renewal density $U(n)=\sum_{k\geq 1} \P(\tau_k=n)$
have been the subject of many works,
e.g.\ \cite{GL}, \cite{E70}, \cite{D97}, 
just to mention a few of the most notable ones.
  The sharpest results in this direction have been recently established in  \cite{CD16+} when
$\alpha \in (0,1)$, and in \cite{B17+} when $\alpha = 1$.

\smallskip

In the case of  \eqref{regular} with $\alpha =0$, results 
of the sorts of Theorem \ref{th:WR} have been obtained in \cite{NW08, N12, AB16}.
One technical difference between these references and our result 
is that we deal with a non-summable sequence $1/n$,
hence it is necessary to consider a family of renewal processes $\tau^{(N)}$ whose
law varies with $N\in\N$
(\emph{triangular array}) via a suitable cutoff. 
This brings our renewal process
out of the scope of the cited references.

\smallskip

We point out that it is possible to generalize our assumption \eqref{eq:rn} to more general renewals with
inter-arrival decay exponent $\alpha=0$. More precisely, replace the constant $\sfa$ 
therein by a slowly varying function $\phi(n)$ \emph{such that $\sum_{n\in\N} \phi(n)/n = \infty$}, 
in which case $R_N=\sum_{n=1}^N \phi(n)/n$ is 
also a slowly varying function with $R_N/\phi(N)\to\infty$ (see \cite[Prop.~1.5.9a]{BGT89}).
We expect that our results extend to this case
with the same techniques,
but we prefer to stick to the simpler assumption \eqref{eq:rn},
which considerably simplifies notation.

\medskip

Let us give an overview of the proof of
Theorem~\ref{th:WR} (see Section~\ref{sec:WR} for more details).
In order to prove the upper bound \eqref{eq:unibo}, a key tool is the following
sharp estimate on the local probability $\P(\tau_k^{(N)}=n)$. 
It suggests that the main contribution to $\{\tau_k^{(N)}=n\}$
comes from the strategy that \emph{a single increment $T_i^{(N)}$ takes values close to $n$}.

\begin{proposition}[Sharp local estimate]\label{prop:sharp}
Let us set $\log^+(x) := (\log x)^+$.
There are constants $C \in (0,\infty)$ and $c \in (0,1)$ such that
for all $N, k \in \N$ and $n \le N$ we have
\begin{equation}\label{eq:pezzofin0intro}
	\P\big(\tau_k^{(N)}=n\big) 
	\ \le \ C \, k \, \P\big(T_1^{(N)} = n\big) \,
	\, \P\big(T_1^{(N)} \le n\big)^{k-1}
	\, e^{-\frac{c \, k}{\log n + 1} \, \log^+\frac{c\, k}{\log n + 1}}  \,.
\end{equation}
\end{proposition}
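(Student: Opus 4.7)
The plan is to combine a one-big-jump decomposition with an iterative truncation argument. First I would reduce to a truncated setting: since $\{\tau_k^{(N)} = n\}$ forces $T_i^{(N)} \le n$ for all $i$, conditioning on this event gives the factorization
\[
\P(\tau_k^{(N)} = n) = \P(T_1^{(N)} \le n)^k \, \P(\tilde\tau_k = n),
\]
where $\tilde T_i$ are i.i.d.\ with law $r(m)\ind_{\{1,\ldots,n\}}(m)/R_n$ and $\tilde\tau_k = \tilde T_1 + \cdots + \tilde T_k$. Using $\P(T_1^{(N)} = n)/\P(T_1^{(N)} \le n) = \P(\tilde T_1 = n)$, the claim is equivalent to
\[
\P(\tilde\tau_k = n) \le C k \, \P(\tilde T_1 = n) \, \exp\!\Big(-\tfrac{ck}{\log n + 1} \log^+ \tfrac{ck}{\log n + 1}\Big).
\]

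To produce the polynomial prefactor $k \, \P(\tilde T_1 = n)$, I would use a standard one-big-jump bound. By exchangeability,
\[
\P\bigl(\tilde\tau_k = n,\, \max_i \tilde T_i > n/2\bigr) \le k \sum_{m=\lceil n/2\rceil+1}^n \P(\tilde T_1 = m) \, \P(\tilde\tau_{k-1} = n-m).
\]
The assumption $r(m) = \sfa/m\,(1+o(1))$ implies $\P(\tilde T_1 = m) \le C \P(\tilde T_1 = n)$ uniformly for $m \in (n/2, n]$; combined with the trivial bound $\sum_{j < n/2} \P(\tilde\tau_{k-1} = j) \le 1$, this yields $\le C k \, \P(\tilde T_1 = n)$, matching the claim with trivial penalty factor.

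To extract the Stirling-type penalty, I would handle the complementary event by iterated truncation. Conditioning on $\max_i \tilde T_i \le n/2$,
\[
\P\bigl(\tilde\tau_k = n,\, \max_i \tilde T_i \le n/2\bigr) = \bigl(R_{n/2}/R_n\bigr)^k \, \P\bigl(\tilde\tau_k^{(n/2)} = n\bigr),
\]
where $\tilde\tau_k^{(n/2)}$ has steps distributed as $r(\cdot)/R_{n/2}$ on $\{1, \ldots, n/2\}$. Since $R_{n/2}/R_n \le 1 - \log 2/R_n \le \exp(-c'/\log n)$, each halving costs a factor $\exp(-c'k/\log n)$. Iterate this halving down to level $j^* = \lfloor \log_2 k\rfloor$, beyond which the constraint $\max_i \tilde T_i \le n/2^{j^*+1}$ would force $\tilde\tau_k < n$. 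The accumulated truncation factor at the top level is
\[
\bigl(R_{n/2^{j^*}}/R_n\bigr)^k \le (1 - \log k/\log n)^k \le \exp(-ck\log k/\log n),
\]
matching the claimed penalty in the regime $k \gtrsim \log n$. Combining the one-big-jump contribution at each iteration level with the accumulated truncation factor, and summing telescopically, yields the target estimate.

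The main technical obstacle lies in the bookkeeping of the iteration: the one-big-jump constant applied at level $j$ depends on the ratio $R_n/R_{n/2^j}$, which grows with $j$ when $2^j$ is large compared to $\log n$. One has to verify that the sum across iteration levels is absorbed into a universal constant. For $k \lesssim \log n$ only $O(1)$ levels are effectively reached, so constants compound harmlessly; for $k \gg \log n$ the shrinking truncation factor overwhelms the growing constants. An alternative approach would be to bound the generating function coefficient $[x^n]\E[x^{\tilde T_1}]^k$ directly via sharp Stirling-number asymptotics (exploiting $\sum_{n_1+\cdots+n_k=n}\prod n_i^{-1} = k!\,|s(n,k)|/n!$), but the iterative truncation seems conceptually more transparent.
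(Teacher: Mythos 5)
Your reduction to the truncated setting and the one-big-jump decomposition by dyadic scales of the maximum are both on the right track and align with the paper's strategy (which uses scales $e^{-\ell}n$ instead of $n/2^j$). However, there is a genuine gap in how you account for the penalty factor, and it cannot be patched by bookkeeping alone.

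The problem is that you use the trivial bound $\sum_{m}\P(\tilde\tau_{k-1}^{(m)} = n-m)\leq 1$ at every level, including $j=0$. At level $j=0$ your bound then reads $\P(\tilde\tau_k = n,\ \max > n/2)\leq Ck\,\P(\tilde T_1 = n)$, with \emph{no} super-exponential factor. Since this is already a lower-order slice of the decomposition, the truncation factor $(R_{n/2^{j^*}}/R_n)^k$ you attach to the top level cannot retroactively make the sum small: the $j=0$ term alone saturates $Ck\,\P(\tilde T_1 = n)$, whereas for $k\gg \log n$ the target is smaller by the factor $e^{-c\beta\log\beta}$ with $\beta = k/\log n$. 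In fact, what you discard at $j=0$, namely the quantity
\begin{equation*}
\sum_{m>n/2}\P\big(\tilde\tau_{k-1}=n-m,\ \text{rest}\leq m\big)\ \leq\ \P\big(\tilde\tau_{k-1}^{(n)}<n/2\big),
\end{equation*}
is precisely the lower-tail event of the truncated renewal process, and \emph{this} is where the Stirling penalty lives: when $k\gg \log n$ the truncated increments have mean $\asymp n/\log n$, so $\tilde\tau_{k-1}^{(n)}<n/2$ is super-exponentially unlikely. The paper extracts exactly this via Lemma~\ref{markov2}, $\P(\tau_k^{(m)}\leq n)\leq\big(\tfrac{n(\log m+1)}{ckm}\wedge 1\big)^{ck/(\log m+1)}$, and this bound (not the truncation factor $(R_m/R_n)^k$) is the source of the super-exponential decay in $k$. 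Your identification of the penalty with $(1-\log k/\log n)^k$ is therefore attributing it to the wrong mechanism.

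There is a second, independent issue in the regime $k\lesssim\log n$. There the target penalty is $O(1)$, so you only need $\sum_{j=0}^{j^*}2^j(R_{n/2^j}/R_n)^{k-1}\lesssim 1$. But with the trivial inner bound and $j^*=\lfloor\log_2 k\rfloor$, this sum is $\asymp 2^{j^*}\asymp k$, giving an extra factor of $k$ (which can be as large as $\log n$). The paper handles this via the Fuk--Nagaev upper-tail bound (Lemma~\ref{markov}): the probability $\P(\tau_{k-1}^{(n/2^j)}\geq n/2)$ decays like $e^{-c\,2^j}$ once $2^j$ exceeds $\asymp k/\log n$, which makes $\sum_j 2^j\cdot(\text{inner probability})$ converge uniformly. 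Without such an estimate the sum over levels does not close. In short: both large-deviation lemmas (the lower tail for $k\gg\log n$ and the Fuk--Nagaev upper tail for $k\lesssim\log n$) are essential inputs that your proposal replaces with the trivial bound, and the argument fails in both regimes as written. Your alternative suggestion via Stirling-number asymptotics could in principle give \eqref{eq:pezzofin} when $r(m)=\sfa/m$ exactly, but it does not cover the general hypothesis $r(m)=\sfa/m(1+o(1))$ and you do not carry it out.
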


\noindent
We point out that \eqref{eq:pezzofin0intro} sharpens 
\cite[eq. (1.11) in Theorem~1.1]{AB16}, thanks to the
last term which decays super-exponentially in $k$.
This will be essential for us, in order to counterbalance the exponential weight
$\lambda^k$ in the renewal density $U_{N,\lambda}(n)$, see \eqref{UN}.

\smallskip

In order to prove the local limit theorem \eqref{eq:UNas}, we use a strategy of
independent interest: we are going to deduce it
from the weak convergence in Proposition~\ref{prop:YN0}
by exploiting \emph{recursive formulas}
for the renewal densities 
$U_{N,\lambda}$ and $G_\theta$, based on a decomposition according to the jump that
straddles a fixed site; see \eqref{eq:Uren} and \eqref{eq:Gren}.
These formulas provide integral representations of the renewal densities
$U_{N,\lambda}$ and $G_\theta$ which reduce a \emph{local}
limit behavior to an \emph{averaged} one, thus allowing to strengthen weak 
convergence results to local ones.

\medskip

Finally, we establish fine asymptotic properties of
the continuum renewal density $G_\theta$. 

\begin{proposition}\label{prop:asGthetac}
For any fixed $\theta \in \R$,
the function $G_{\theta}(t)$ is continuous
(actually $C^\infty$) and strictly positive for $t \in (0,1]$.
As $t \downarrow 0$ we have $G_\theta(t) \to \infty$, more precisely
\begin{equation} \label{eq:Gthetacas}
	G_{\theta}(t) =
	\frac{1}{t (\log\frac{1}{t})^2}
	\Bigg\{ 1
	+ \frac{2\theta}{\log \frac{1}{t}}
	\ + \ O\bigg(\frac{1}{(\log \frac{1}{t})^{2}} \bigg) \Bigg\} \,.
\end{equation}
\end{proposition}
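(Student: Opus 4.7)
\medskip

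\noindent\textbf{Proof plan for Proposition~\ref{prop:asGthetac}.}
The starting point is the explicit formula \eqref{eq:G0+}, valid for $t\in(0,1]$:
\[
	G_\theta(t) \;=\; \int_0^\infty \frac{s\, t^{s-1}\, e^{(\theta-\gamma)s}}{\Gamma(s+1)}\,\dd s.
\]
Positivity is immediate since the integrand is strictly positive for $s>0$, $t>0$. For continuity and $C^\infty$-smoothness on $(0,1]$, I would differentiate under the integral sign: each partial derivative in $t$ produces an extra factor that is polynomial in $s$ and in $|\log t|$, and the super-exponential decay of $1/\Gamma(s+1)$ (via Stirling) dominates all such factors uniformly for $t$ in compact subsets of $(0,1]$, so dominated convergence applies.

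For the asymptotic as $t\downarrow 0$, set $L:=\log(1/t)>0$ and perform the change of variables $u=sL$. Writing $t^{s-1}=t^{-1}e^{-sL}$, this yields the rescaled representation
\begin{equation} \label{eq:plan-resc}
	G_\theta(t) \;=\; \frac{1}{t\,L^2}\int_0^\infty u\, e^{-u}\, \Phi\!\left(\tfrac{u}{L}\right)\dd u,
	\qquad \Phi(x):=\frac{e^{(\theta-\gamma)x}}{\Gamma(1+x)}.
\end{equation}
This reduces the problem to a Laplace-type expansion of the integral, where $ue^{-u}\dd u$ concentrates on $u$ of order one, i.e.\ on $s=u/L$ of order $1/L$.

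The key analytic input is the Taylor expansion at $0$. Using $\log\Gamma(1+x)=-\gamma x+\frac{\pi^2}{12}x^2+O(x^3)$, I would write $\Phi(x)=\exp\bigl(\theta x-\frac{\pi^2}{12}x^2+O(x^3)\bigr)=1+\theta x+O(x^2)$ as $x\downarrow 0$. Combined with the fact that $\Phi$ is continuous on $[0,\infty)$ with $\Phi(x)\to 0$ as $x\to\infty$ (so $\Phi$ is bounded on $[0,\infty)$ by some constant $M$), Taylor's theorem with remainder gives a global estimate
\[
	\bigl|\Phi(u/L)-1-\theta\, u/L\bigr| \;\le\; C\,(u/L)^2 \qquad \text{for } u\le L,
\]
while for $u>L$ one has $|\Phi(u/L)-1-\theta u/L|\le M+1+|\theta|\,u/L$, which when multiplied by $u e^{-u}$ yields a contribution that is exponentially small in $L$. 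Hence
\[
	\int_0^\infty u e^{-u}\,\Phi(u/L)\,\dd u \;=\; \int_0^\infty u e^{-u}\,\dd u + \frac{\theta}{L}\int_0^\infty u^2 e^{-u}\,\dd u + O\!\left(\tfrac{1}{L^2}\right),
\]
and the two integrals evaluate to $1$ and $2$ respectively. Plugging into \eqref{eq:plan-resc} gives exactly \eqref{eq:Gthetacas}.

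The only genuine obstacle is the uniform control of the remainder in \eqref{eq:plan-resc}: one must check that the bound $(u/L)^2$ from the Taylor expansion near $0$ can be stitched together with the boundedness of $\Phi$ at infinity into a single envelope integrable against $u e^{-u}$. The boundedness of $\Phi$ on $[0,\infty)$ follows cleanly from Stirling's formula, after which the remainder estimate is a routine split of the integration range at $u=L$.
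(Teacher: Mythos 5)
Your proof is correct and follows essentially the same route as the paper: the substitution $u = s\log(1/t)$, reduction to a Laplace-type integral $\frac{1}{tL^2}\int_0^\infty u e^{-u}(\cdots)\,\dd u$, and the values $\int_0^\infty u e^{-u}\,\dd u = 1$, $\int_0^\infty u^2 e^{-u}\,\dd u = 2$. The only cosmetic difference is that the paper rewrites $e^{-\gamma s}/\Gamma(s+1)$ as $\P(Y_s\le 1)$ and cites its earlier bound $\P(Y_s>1)=O(s^2)$, whereas you expand $\log\Gamma(1+x)=-\gamma x+O(x^2)$ directly and are somewhat more explicit about the uniform remainder control (splitting at $u=L$), where the paper simply invokes dominated convergence.
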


\smallskip

\begin{remark}\label{rem:density}
Our results also apply to renewal processes with a density. 
Fix a bounded and continuous function $r: [0,\infty) \to (0,\infty)$ with
$r(t) = \frac{\sfa}{t}(1+o(1))$ as $t \to \infty$, so that
$R_N := \int_0^N r(t) \, \dd t = \sfa \, \log N(1+o(1))$.
If we consider the renewal process $\tau^{(N)}_k$ in \eqref{eq:deftau} with
\begin{equation*}
	\P(T^{(N)}_i \in \dd t) = \frac{r(t)}{R_N} \, \ind_{[0,N]}(t) \, \dd t \,,
\end{equation*}
then Proposition~\ref{prop:YN0}, Theorem~\ref{th:WR} and Proposition~\ref{prop:sharp}
still hold, provided $\P\big(\tau_k^{(N)}=n\big)$ denotes
the density of $\tau_k^{(N)}$. The proofs can be easily adapted, replacing
sums by integrals. 
\end{remark}

\smallskip

\subsection{Organization of the paper.}
In Section~\ref{sec:main2} we present multi-dimensional extensions
of our main results,
where we extend the subordinator and the renewal processes with a spatial 
component.
This is guided by applications to the directed polymer model.

In Section~\ref{sec:main3} we discuss the applications of our results 
to disordered systems and
more specifically to pinning and directed polymer models.
A result of independent interest is Proposition~\ref{prop:EM}, where
we prove sharp 
asymptotic results on the expected number of encounters at the origin 
of two independent simple random walks on $\Z$; this also gives 
the expected number of encounters (anywhere) of two independent
simple random walks on $\Z^2$.

The remaining sections \ref{sec:conve}-\ref{sec:Green} are devoted to the proofs. 
Appendix~\ref{sec:appds} contains results for disordered systems,
while Appendix~\ref{sec:Dickman} is devoted to the Dickman subordinator.

\medskip

\section{Multidimensional extensions}
\label{sec:main2}

We extend our subordinator $Y$ by adding a spatial component, 
that for simplicity we assume to 
be Gaussian. More precisely, we fix a dimension $d \in \N$ and we
let $W = (W_t)_{t \in [0,\infty)}$ denote a
standard Brownian motion on $\R^d$.
Its density is given by
\begin{equation} \label{eq:gt}
	g_t(x) := \frac{1}{(2\pi t )^{d/2}}
	\exp(- \tfrac{|x|^2}{2 t} ) \,,
\end{equation}
where $|x|$ is the Euclidean norm.
Note that $\sqrt{\sfc} \, W_t$
has density $g_{\sfc t}(x)$, for every $\sfc \in (0,\infty)$.

\smallskip

Recall the definition \eqref{eq:bsLevymeasure} of the measure $\nu$.
We denote by $\bsY^{\sfc}:=(\bsY^{\sfc}_s )_{s\geq 0} 
= (Y_s, V^{\sfc}_s)_{s\geq 0}$ 
the L\'evy process on $[0,\infty) \times \R^d$
with zero drift, no Brownian component, and L\'evy measure
\begin{equation} \label{eq:bsLevymeasure_space}
	\bsnu(\dd t, \dd x) := \nu(\dd t) \, g_{\sfc t}(x) \, \dd x =
	\frac{\ind_{(0,1)}(t)}{t} \, g_{\sfc t}(x) \, \dd t \,\dd x \,.
\end{equation}
Equivalently, for all $\bslambda\in\R^{1+d}$ and $s \in [0,\infty)$,
\begin{equation}\label{eq:bsLapY}
	\E[e^{\langle \bslambda , \bsY^\sfc_s \rangle}] =
	\exp \bigg\{s \, \int_{(0,1) \times \R^d}
	(e^{\langle \bslambda, (t,x) \rangle} - 1) \, \frac{g_{\sfc t}(x)}{t}
	\, \dd t \, \dd x \bigg\} \,.
\end{equation}

We can identify the probability density of 
$\bsY^\sfc_s$ for $s\in[0,\infty)$ as follows.

\begin{proposition}[Density of L\'evy process] \label{th:bsYY}
We have the following representation:
\begin{equation*}
	(\bsY^\sfc_s)_{s \in [0,\infty)} \quad \overset{d}{=} \quad
	\big( (Y_s, \, \sqrt{\sfc} \, W_{Y_s}) \big)_{s \in [0,\infty)} \,,
\end{equation*}
with $W$ independent of $Y$.
Consequently, $\bsY_s^\sfc$ has probability density
(recall \eqref{eq:fst} and \eqref{eq:gt})
\begin{equation}\label{eq:densities}
	\bsf_s(t,x) = f_s(t) \, g_{ \sfc t}(x) \,.
\end{equation}
\end{proposition}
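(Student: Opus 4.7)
The plan is to identify the Lévy process $\bsY^\sfc$ by matching its exponential moments with those of the explicit candidate process $\bsZ_s := (Y_s, \sqrt{\sfc}\, W_{Y_s})$, using the fact that a Lévy process on $[0,\infty)\times\R^d$ is uniquely determined (in distribution) by the one-dimensional marginal $\bsY^\sfc_1$ together with the independent-increment, stationarity structure. So the first step is to verify that $\bsZ := (\bsZ_s)_{s\geq 0}$ is itself a Lévy process: this is the standard fact that the subordination of a Brownian motion by an independent subordinator yields a Lévy process, since increments of $Y$ are stationary and independent and, conditionally on the increments of $Y$, the increments $W_{Y_{s+u}}-W_{Y_s}$ are independent Gaussians with variance $Y_{s+u}-Y_s$.

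The second step is the Laplace transform computation. For $\bslambda=(\lambda,\mu)\in\R\times\R^d$, condition on $Y_s$ and use the Gaussian moment generating function:
\begin{equation*}
\E\bigl[e^{\lambda Y_s+\langle\mu,\sqrt{\sfc}\,W_{Y_s}\rangle}\bigr]
=\E\bigl[e^{\lambda Y_s}\,\E[e^{\langle\mu,\sqrt{\sfc}\,W_{Y_s}\rangle}\mid Y_s]\bigr]
=\E\bigl[e^{(\lambda+\frac{\sfc|\mu|^2}{2})Y_s}\bigr].
\end{equation*}
Applying \eqref{eq:LapY} with $\lambda$ replaced by $\lambda+\tfrac{\sfc|\mu|^2}{2}$ gives
\begin{equation*}
\E\bigl[e^{\langle\bslambda,\bsZ_s\rangle}\bigr]
=\exp\!\Bigl\{s\int_0^1\bigl(e^{(\lambda+\frac{\sfc|\mu|^2}{2})t}-1\bigr)\,\frac{\dd t}{t}\Bigr\}.
\end{equation*}

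The third step is to massage the right-hand side of \eqref{eq:bsLapY} into the same form. For each fixed $t\in(0,1)$, Fubini lets us carry out the $x$-integral against the Gaussian $g_{\sfc t}(x)$ using the identity $\int_{\R^d} e^{\langle\mu,x\rangle}g_{\sfc t}(x)\,\dd x = e^{\sfc t|\mu|^2/2}$:
\begin{equation*}
\int_{\R^d}\bigl(e^{\lambda t+\langle\mu,x\rangle}-1\bigr)\,g_{\sfc t}(x)\,\dd x
= e^{\lambda t}\,e^{\frac{\sfc t|\mu|^2}{2}}-1
= e^{(\lambda+\frac{\sfc|\mu|^2}{2})t}-1.
\end{equation*}
Inserting this into \eqref{eq:bsLapY} yields exactly the expression obtained above for $\E[e^{\langle\bslambda,\bsZ_s\rangle}]$, so $\bsY^\sfc_s$ and $\bsZ_s$ have identical Laplace transforms on a neighbourhood of the origin (in fact on all of $\R^{1+d}$, since the Lévy measure $\bsnu$ has compact support in the time coordinate). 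Since both $\bsY^\sfc$ and $\bsZ$ are Lévy processes, agreement of the one-dimensional marginal at (any, hence) every $s$ forces equality in distribution as processes. The density formula \eqref{eq:densities} is then immediate: conditionally on $Y_s=t$, the vector $\sqrt{\sfc}\,W_{Y_s}$ is $\cN(0,\sfc t\,I_d)$ with density $g_{\sfc t}(x)$, so the joint density factorises as $f_s(t)\,g_{\sfc t}(x)$.

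There is no real obstacle in this proof; the only point deserving care is the verification that $\bsZ$ is genuinely a Lévy process (so that the characteristic-triple/one-dimensional-marginal uniqueness for Lévy processes applies), as opposed to merely matching the law at a single time. This can be done either by the subordination argument sketched above, or, equivalently and more pedestrianly, by checking joint Laplace transforms at finitely many times $0<s_1<\cdots<s_n$ using independence of the increments of $Y$ and of $W$.
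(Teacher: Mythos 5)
Your proposal is correct and follows essentially the same route as the paper's own proof: condition on $Y_s$ to compute the Laplace transform of $(Y_s,\sqrt{\sfc}\,W_{Y_s})$, then match it against \eqref{eq:bsLapY} by carrying out the Gaussian $x$-integral (the paper's identity \eqref{eq:makingint}). You are somewhat more explicit than the paper in noting that one must first confirm that the subordinated process is itself Lévy before concluding that equality of one-dimensional marginals yields equality in distribution as processes, which is a reasonable point of care but standard.
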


We now define a family
of random walks in the domain of attraction of $\bs{Y}^{ \sfc }$.
Recall that $r(n)$ was defined in \eqref{eq:rn}. We consider
a family of probability kernels $p(n, \cdot )$ on $\Z^d$, indexed by $n\in\N$,
which converge in law to $\sqrt{\sfc} \, W_1$
when rescaled diffusively. More precisely,
we assume  the following conditions:
\begin{equation} \label{eq:proppnx}
\begin{aligned}
{\rm (i)} &\quad \sum_{x\in \Z^d} x_i \,p(n,x) = 0
\quad \text{for } i=1,\ldots, d\\
{\rm (ii)}& \quad 
\sum_{x\in \Z^d} |x|^2 \,p(n,x) = 
O(n)
\quad \text{as } n \to \infty
 \\
{\rm (iii)}& \quad
\sup_{x\in\Z^d}  \big| 
n^{d/2} \, p(n,x) - g_{\sfc}
\big( \tfrac{x}{\sqrt{n}} \big) \big| = o(1)
\quad \text{as } n \to \infty \,.
\end{aligned} 
\end{equation}
Note that $\sfc \in (0,\infty)$
is the asymptotic variance of each component.
Also note that, by (iii),
\begin{equation}\label{eq:lltnorm}
	\sup_{x\in\Z} \, p(n,x) = O\bigg( \frac{1}{n^{d/2}} \bigg)
	\quad \text{as } n \to \infty \,.
\end{equation}

Then we define, for every $N\in\N$, the i.i.d.\ random variables $(T^{(N)}_i, X^{(N)}_i)\in\N \times \Z^d$ by
\begin{equation} \label{eq:bsXN}
	\P\big((T^{(N)}_i, X^{(N)}_i) = (n,x)\big) 
	:= \frac{r(n) \, p(n, x)}{R_N} \, \ind_{\{1,\ldots, N\}}(n) \,,
\end{equation}
with $r(n)$, $R_N$ as in \eqref{eq:rn}, \eqref{eq:RN}.
Let $(\tau^{(N)}, S^{(N)})$ be the associated random walk, i.e.\
\begin{equation} \label{eq:tauS}
	\tau^{(N)}_k := T^{(N)}_1 + \ldots + T^{(N)}_k \,, \qquad
	S^{(N)}_k := X^{(N)}_1 + \ldots + X^{(N)}_k \,.
\end{equation}
We have the following analogue of Proposition~\ref{prop:YN0}.

\begin{proposition}[Convergence of rescaled L\'evy process]\label{prop:YN01}
Assume that the conditions in \eqref{eq:proppnx} hold.
The rescaled process
\begin{equation*}
	\Bigg( \frac{\tau^{(N)}_{\lfloor s \log N \rfloor}}{N},
	\frac{S^{(N)}_{\lfloor s \log N \rfloor}}{\sqrt{N}} \Bigg)_{s\geq 0}
\end{equation*}
converges in distribution to $(\bsY^{\sfc}_s
:= (Y_s,  V^\sfc_s))_{s\geq 0}$, as $N\to\infty$.
\end{proposition}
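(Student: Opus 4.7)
My plan is to establish convergence via characteristic functions, generalizing the argument underlying Proposition~\ref{prop:YN0}. The product structure of the increment law \eqref{eq:bsXN} together with the local CLT assumption \eqref{eq:proppnx}(iii) makes the computation factor cleanly: the spatial component enters through a Gaussian characteristic function, while the time marginal reproduces the proof of Proposition~\ref{prop:YN0}. Finite-dimensional convergence then follows from the i.i.d.\ structure, and Skorokhod tightness from standard results on row-sums of i.i.d.\ triangular arrays converging to a L\'evy process without Brownian component.

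For the key computation, fix $\bslambda = (\lambda_0, \bslambda_1) \in \R \times \R^d$ and set $\hat{p}_n(\bs{\xi}) := \sum_{x \in \Z^d} p(n, x) \, e^{i \langle \bs{\xi}, x \rangle}$. The characteristic function of a single rescaled increment is
\begin{equation*}
  \phi_N(\bslambda) := \E \bigg[ \exp \Big( i \lambda_0 \tfrac{T_1^{(N)}}{N} + i \big\langle \bslambda_1, \tfrac{X_1^{(N)}}{\sqrt{N}} \big\rangle \Big) \bigg] = \frac{1}{R_N} \sum_{n=1}^N r(n) \, e^{i \lambda_0 n/N} \, \hat{p}_n \big( \tfrac{\bslambda_1}{\sqrt{N}} \big).
\end{equation*}
For $n = \lfloor tN \rfloor$ with $t \in (0,1]$, \eqref{eq:proppnx}(iii) gives $\hat{p}_n(\bslambda_1/\sqrt{N}) \to \hat{g}_{\sfc t}(\bslambda_1) = e^{- \sfc t |\bslambda_1|^2/2}$; combined with $r(n)/R_N = 1/(n \log N)(1+o(1))$, a Riemann sum approximation yields
\begin{equation*}
  (\log N) \big( \phi_N(\bslambda) - 1 \big) \xrightarrow[N\to\infty]{} \int_0^1 \!\! \int_{\R^d} \big( e^{i \lambda_0 t + i \langle \bslambda_1, x \rangle} - 1 \big) \, \frac{g_{\sfc t}(x)}{t} \, \dd x \, \dd t \ =: \ \Psi(\bslambda),
\end{equation*}
which is precisely the characteristic exponent of $\bsY^\sfc_1$ read off from \eqref{eq:bsLapY}. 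By the i.i.d.\ structure,
\begin{equation*}
  \E \bigg[ \exp \Big( i \lambda_0 \tfrac{\tau^{(N)}_{\lfloor s \log N \rfloor}}{N} + i \big\langle \bslambda_1, \tfrac{S^{(N)}_{\lfloor s \log N \rfloor}}{\sqrt{N}} \big\rangle \Big) \bigg] = \phi_N(\bslambda)^{\lfloor s \log N \rfloor} \xrightarrow[N\to\infty]{} e^{s \Psi(\bslambda)} = \E \big[ e^{i \langle \bslambda, \bsY^\sfc_s \rangle} \big],
\end{equation*}
and finite-dimensional convergence at arbitrary times $0 \le s_1 < \cdots < s_m$ follows by applying the same argument to the disjoint increment blocks, which are independent on both sides.

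Tightness in the Skorokhod space can be obtained from the standard criterion for row-sums of i.i.d.\ triangular arrays converging to a L\'evy process with no Gaussian part: vague convergence of the rescaled jump measure $(\log N)\,\mathcal{L}(T_1^{(N)}/N, X_1^{(N)}/\sqrt{N})$ to $\bsnu$ on the complement of every neighbourhood of the origin, together with vanishing small-jump variance and truncated drift; the drift condition is automatic since $X_1^{(N)}$ has zero mean by \eqref{eq:proppnx}(i). All of these are consequences of the same computation. The main obstacle is that $1/n$ is not summable, so the Riemann approximation must be split at a threshold $\epsilon N$: on $\{n \le \epsilon N\}$, using \eqref{eq:proppnx}(i), (ii) and $|e^{i \theta} - 1 - i \theta| \le \theta^2/2$ one obtains $|\hat{p}_n(\bslambda_1/\sqrt{N}) - 1| = O(n |\bslambda_1|^2 / N)$ and $|e^{i \lambda_0 n/N} - 1| = O(|\lambda_0|\, n/N)$, so the weighted contribution is $O(\epsilon)$ and vanishes as $\epsilon \downarrow 0$; on $\{n > \epsilon N\}$ the uniform local CLT together with dominated convergence yields the Riemann integral over $(\epsilon, 1)$.
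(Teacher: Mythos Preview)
Your argument is correct and takes a genuinely different route from the paper's. The paper proceeds probabilistically: it fixes $\epsilon>0$, isolates the set $I_s^{(N,\epsilon)}$ of indices with $T_i^{(N)}>\epsilon N$, bounds the small-jump contributions to both $\tau^{(N)}$ and $S^{(N)}$ in $L^1$ and $L^2$ via conditions (i)--(ii) of \eqref{eq:proppnx}, and then couples the big jumps to those of $\bsY^\sfc$ through a Skorokhod construction carried over from the proof of Proposition~\ref{prop:YN0}, using the local CLT (iii) to show that each individual spatial increment $X_{i_N}^{(N)}/\sqrt{N}$ converges in law to the appropriate Gaussian. Your characteristic-function computation packages the same ingredients analytically: the $\epsilon$-splitting appears as the device to justify the Riemann approximation, condition (ii) controls the region $\{n\le\epsilon N\}$ through $|\hat p_n(\bslambda_1/\sqrt{N})-1|=O(n|\bslambda_1|^2/N)$, and condition (iii) (which, combined with the fact that $p(n,\cdot)$ is a probability kernel, implies weak convergence of $p(n,\sqrt{n}\,\cdot\,)$ to $g_\sfc$) yields the pointwise limit on $\{n>\epsilon N\}$. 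The paper's approach makes the compound-Poisson structure and the representation $\bsY^\sfc_s=(Y_s,\sqrt{\sfc}\,W_{Y_s})$ explicit and produces an actual coupling; yours is shorter and more mechanical. Note also that the paper only proves finite-dimensional convergence, explicitly remarking that Skorokhod tightness is omitted as it is not needed, so your tightness paragraph goes beyond what is required.
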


We finally introduce the exponentially weighted renewal density
\begin{equation} \label{UN2}
	\bsU_{N,\lambda}(n,x) :=
	\sum_{k\ge 0} \lambda^k \,
	\P(\tau^{(N)}_k=n, \, S^{(N)}_k=x) \,,
\end{equation}
as well as its continuum version:
\begin{equation} \label{eq:G}
	\bsG_{\theta}(t,x) := \int_0^\infty 
	e^{\theta s} \, \bsf_s(t,x) \, \dd s
	= G_{\theta}(t) \, 	g_{ \sfc t}(x) 
	\qquad \text{for } t \in (0,\infty) \,, \ x \in \R^d \,,
\end{equation}
where the second equality follows by \eqref{eq:G0} and Proposition~\ref{th:bsYY}.
Recall \eqref{UN} and observe that
\begin{equation}\label{eq:Usum}
	\sum_{x\in\Z^d} \bsU_{N,\lambda}(n,x) = U_{N,\lambda}(n)
\end{equation}

The following result is an extension of Theorem~\ref{th:WR}.

\begin{theorem}[Space-time renewal theorem]\label{th:WR2}
Fix any $\theta \in \R$ and let $(\lambda_N)_{N\in\N}$ satisfy
\begin{equation*}
	\lambda_N = 1 + \frac{\theta}{\log N} \big(1+o(1)\big) \qquad
	\text{as } N \to \infty \,.
\end{equation*}
For any fixed $0 < \delta < T < \infty$, 
the following relation holds as $N \to \infty$:
\begin{equation} \label{eq:bsUNas}
\begin{split}
	& \bsU_{N,\lambda_N}(n,x) 
	\,=\, \frac{\log N}{N^{1+d/2}} \, 
	G_{\theta}\big(\tfrac{n}{N}\big) \, g_{\sfc \frac{n}{N}}
	\big(\tfrac{x}{\sqrt{N}}\big) \big(1+o(1)\big) \,, \\
	& \rule{0pt}{1.3em} \quad \text{uniformly for } \
	\delta N \le n \le T N, \
	|x| \le \tfrac{1}{\delta}\sqrt{N} \,.
\end{split}
\end{equation}
Moreover, for any fixed $T<\infty$, the following uniform bound holds, for a suitable $C \in (0,\infty)$:
\begin{gather} \label{eq:unibo2}
	\bsU_{N,\lambda_N}(n,x) \le C \, \frac{\log N}{N} \, \frac{1}{n^{d/2}} \, 
	G_{\theta}(\tfrac{n}{N}) \, ,
	\qquad \forall 0 < n \le T N \,, \ \forall x \in \Z^d \,.
\end{gather}
\end{theorem}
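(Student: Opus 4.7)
The plan is to mirror the two-part strategy used to prove Theorem~\ref{th:WR}: first establish the uniform upper bound \eqref{eq:unibo2} via the sharp local estimate plus a conditional local-CLT argument, then deduce the sharp asymptotic \eqref{eq:bsUNas} by combining the space-time weak convergence of Proposition~\ref{prop:YN01} with the renewal equation.

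\textbf{Upper bound.} I would condition on the time trajectory $(T^{(N)}_1,\ldots,T^{(N)}_k)$: given these, the conditional mass function of $S^{(N)}_k$ at $x$ is the convolution $(p(T^{(N)}_1,\cdot)*\cdots*p(T^{(N)}_k,\cdot))(x)$. Since $\sum_i T^{(N)}_i=n$ there exists $i^\star$ with $T^{(N)}_{i^\star}\ge n/k$, so by \eqref{eq:lltnorm} the convolution is bounded pointwise by $C(T^{(N)}_{i^\star})^{-d/2}\le C\,k^{d/2}n^{-d/2}$. Hence
\begin{equation*}
	\P\big(\tau^{(N)}_k=n,\,S^{(N)}_k=x\big)\,\le\,\frac{C\,k^{d/2}}{n^{d/2}}\,\P\big(\tau^{(N)}_k=n\big),
\end{equation*}
so $\bsU_{N,\lambda_N}(n,x)\le Cn^{-d/2}\sum_{k\ge 1}k^{d/2}\lambda_N^k\,\P(\tau^{(N)}_k=n)$. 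Plugging in Proposition~\ref{prop:sharp}, the super-exponential factor in $k$ comfortably absorbs the polynomial $k^{d/2}$, and the remaining sum is controlled exactly as in the proof of \eqref{eq:unibo} in Theorem~\ref{th:WR}; this yields \eqref{eq:unibo2}.

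\textbf{Local limit.} The starting point is the space-time renewal equation, for $n\ge 1$,
\begin{equation*}
	\bsU_{N,\lambda_N}(n,x)=\lambda_N\sum_{m=1}^{n\wedge N}\sum_{y\in\Z^d}\bsU_{N,\lambda_N}(n-m,x-y)\,\frac{r(m)\,p(m,y)}{R_N}\,,
\end{equation*}
the space-time analogue of the recursion used in Theorem~\ref{th:WR}. Rescaling $t'=m/N$, $z=y/\sqrt{N}$, and using \eqref{eq:rn}--\eqref{eq:RN} together with \eqref{eq:proppnx}(iii), one finds
\begin{equation*}
	\frac{r(m)\,p(m,y)}{R_N}=\frac{1}{N^{1+d/2}\log N}\,\frac{g_{\sfc t'}(z)}{t'}\,\big(1+o(1)\big)\,.
\end{equation*}
Combining the space-time weak convergence of Proposition~\ref{prop:YN01}, which gives $\bsU_{N,\lambda_N}(n-m,x-y)\sim(\log N/N^{1+d/2})\bsG_\theta((n-m)/N,(x-y)/\sqrt{N})$ in the averaged sense, with the upper bound \eqref{eq:unibo2} as an integrable majorant to justify dominated convergence of the resulting Riemann sum (volume element $N\cdot N^{d/2}\,\dd t'\dd z$), one obtains
\begin{equation*}
	\bsU_{N,\lambda_N}(n,x)\,\sim\,\frac{\lambda_N}{N^{1+d/2}}\int_{1/N}^{n/N}\int_{\R^d}\bsG_\theta\big(\tfrac{n}{N}-t',\,\tfrac{x}{\sqrt{N}}-z\big)\,\frac{g_{\sfc t'}(z)}{t'}\,\dd z\,\dd t'\,.
\end{equation*}
Using $\bsG_\theta(t,y)=G_\theta(t)g_{\sfc t}(y)$ (cf.~\eqref{eq:G}), the $z$-integral is a Gaussian convolution collapsing to $g_{\sfc n/N}(x/\sqrt{N})$. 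The remaining $t'$-integral has a logarithmic divergence at $t'=0$ that is cut off at the discrete scale $1/N$: by continuity and boundedness of $G_\theta$ on compact subsets of $(0,\infty)$ (Proposition~\ref{prop:asGthetac}),
\begin{equation*}
	\int_{1/N}^{n/N}\frac{G_\theta(n/N-t')}{t'}\,\dd t' = G_\theta(n/N)\,\log N\,\big(1+o(1)\big)\,,
\end{equation*}
which combined with the Gaussian factor reproduces \eqref{eq:bsUNas}.

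\textbf{Main obstacle.} The delicate step is the passage to the limit in the renewal sum, because of two non-integrable singularities: the factor $1/t'$ at $t'\downarrow 0$ (the source of the $\log N$ prefactor) and $G_\theta(n/N-t')\sim 1/((n/N-t')\log^2(1/(n/N-t')))$ as $t'\uparrow n/N$ by Proposition~\ref{prop:asGthetac} (the regime where the last straddling jump is essentially the whole walk). The uniform bound \eqref{eq:unibo2} is precisely the integrable majorant needed to control both boundary regimes $m\le\eps N$ and $m\ge n-\eps N$ via dominated convergence; this is where the order in which the two halves of the theorem are proved is essential. Uniformity of \eqref{eq:bsUNas} over $|x|\le\sqrt{N}/\delta$ follows from the uniformity of the local CLT \eqref{eq:proppnx}(iii) on compact subsets of $\R^d$, together with uniformity on compacts in Proposition~\ref{prop:YN01}.
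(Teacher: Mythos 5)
Your upper bound argument has a quantitative gap. Isolating the largest increment $T^{(N)}_{i^\star}$ through the crude bound $T^{(N)}_{i^\star}\ge n/k$ only gives $\P\big(\tau^{(N)}_k=n,\,S^{(N)}_k=x\big)\le C\,k^{d/2}n^{-d/2}\,\P\big(\tau^{(N)}_k=n\big)$. The renewal sum $\sum_k \lambda_N^k\,\P(\tau^{(N)}_k=n)$ is concentrated on $k$ of order $\log N$, so the extra $k^{d/2}$ costs a factor $(\log N)^{d/2}$ after summing over $k$: the series still converges, but what you obtain is of order $\frac{(\log N)^{1+d/2}}{N}\,\frac{1}{n^{d/2}}\,G_\theta(\tfrac{n}{N})$, which is off by $(\log N)^{d/2}$ from \eqref{eq:unibo2}. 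What one needs is the actual size $m$ of the largest jump, not the trivial lower bound $n/k$: the paper reopens the proof of Proposition~\ref{prop:sharp}, and in the shell decomposition $M^{(n)}_k\in(e^{-1}m,m]$ with $m=e^{-\ell}n$ it pairs the spatial factor $m^{-d/2}$ with the largest-jump probability, producing an extra $e^{(1+d/2)\ell}$ in the $\ell$-sum that is still killed by the super-exponential term. This yields the analogue \eqref{eq:pezzofin02} of Proposition~\ref{prop:sharp} with a single factor $k$ in front, not $k^{1+d/2}$.

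The local limit argument has a more serious, structural problem. Your one-step renewal equation is correct, but it cannot be pushed to the limit using only the averaged (vague) convergence. Re-expressed as an integral against the measure $\bsG^{(N)}_{\lambda_N}$ of \eqref{eq:bsG^N}, the test function behaves like $g_{\sfc(t_N-t')}(w_N-z')/(t_N-t')$, which is \emph{not} integrable against the limit $\bsG_\theta(t',z')\,\dd t'\,\dd z'$: the $t'$-integral diverges logarithmically as $t'\uparrow t_N$, which is exactly where your $\log N$ factor comes from. Equivalently, the small last jumps $m\le\epsilon N$ already carry total mass $\sum_{m\le\epsilon N}\sum_{y}\P(T^{(N)}_1=m,X^{(N)}_1=y)=R_{\epsilon N}/R_N\to 1$, and there $\bsU_{N,\lambda}(n-m,x-y)$ is approximately the quantity being estimated, so the one-step identity is self-referential to leading order; substituting $\bsU\approx\frac{\log N}{N^{1+d/2}}\bsG_\theta$ inside the sum assumes precisely what is to be proved. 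The paper circumvents this by decomposing instead on the unique jump that straddles a fixed mesoscopic time $\bar n=n/2$, which expresses $\bsU_{N,\lambda}(n,x)$ as a \emph{double} integral against two independent copies of $\bsG^{(N)}_{\lambda_N}$ (eq.~\eqref{eq:Uren2}); the continuum counterpart \eqref{eq:Gren2} is a \emph{finite} integral, the vague convergence of Lemma~\ref{th:weakGU2} applies directly off the diagonal, and the near-diagonal boundary contribution is controlled by the uniform bound \eqref{eq:unibo2}. Your intuitions — that \eqref{eq:unibo2} is the right integrable majorant, and that the order of the two halves of the proof matters — are correct, but they must be deployed inside the straddling decomposition, not the one-step one.
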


The bound \eqref{eq:unibo2} is to be expected, in view of
\eqref{eq:bsUNas}, because $\sup_{z\in\R^d}
g_{t}(z) \le \frac{C}{t^{d/2}}$. Finally,
we show that the probability
$\frac{\bsU_{N,\lambda}(n,\cdot)}{U_{N,\lambda}(n)}$
is concentrated on the diffusive scale $O(\sqrt{n})$.

\begin{theorem}\label{th:Udiffusive}
There exists a constant $C \in (0,\infty)$ such that
for all $N\in\N$ and $\lambda \in (0,\infty)$
\begin{equation}	\label{eq:Udiffusive}
	\sum_{x \in \Z^d: \; |x| > M \sqrt{n}} 
	\frac{\bsU_{N,\lambda}(n,x)}{U_{N,\lambda}(n)}
	\le \frac{C}{M^2} \,, \qquad \forall n \in \N \,, \
	\forall M > 0  \,.
\end{equation}
\end{theorem}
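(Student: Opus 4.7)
The plan is to derive \eqref{eq:Udiffusive} from a second-moment bound via Chebyshev's inequality. Specifically, I will first prove the existence of a constant $C' \in (0,\infty)$, independent of $N$, $n$ and $\lambda$, such that
$$
	\sum_{x \in \Z^d} |x|^2 \, \bsU_{N,\lambda}(n,x) \,\le\, C' \, n \, U_{N,\lambda}(n) \,.
$$
Once this is established, \eqref{eq:Udiffusive} is immediate: on the event $\{|x| > M\sqrt{n}\}$ one has $1 \le |x|^2 / (M^2 n)$, so summing and dividing by $U_{N,\lambda}(n)$ yields the bound $C'/M^2$.

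To obtain the second-moment estimate, I would unfold definition \eqref{UN2} to write
$$
	\sum_{x \in \Z^d} |x|^2 \, \bsU_{N,\lambda}(n,x) \,=\, \sum_{k \ge 0} \lambda^k \, \E\!\left[ |S^{(N)}_k|^2 \, \ind_{\{\tau^{(N)}_k = n\}} \right] \,,
$$
and then bound the expectation on the right-hand side by $C'' n \, \P(\tau^{(N)}_k = n)$; summing over $k$ with weights $\lambda^k$ and recalling \eqref{UN} gives the claim with $C' = C''$. The key observation is that, by the joint law \eqref{eq:bsXN}, the spatial increments $X^{(N)}_1, \dots, X^{(N)}_k$ are \emph{conditionally independent} given the time increments $T^{(N)}_1, \dots, T^{(N)}_k$, with $X^{(N)}_i$ distributed as $p(T^{(N)}_i, \cdot)$. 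Assumption (i) of \eqref{eq:proppnx} forces $\E[X^{(N)}_i \mid T^{(N)}_i] = 0$ coordinatewise, so all cross terms cancel and
$$
	\E\!\left[ |S^{(N)}_k|^2 \,\middle|\, T^{(N)}_1, \dots, T^{(N)}_k \right] \,=\, \sum_{i=1}^{k} \sum_{x \in \Z^d} |x|^2 \, p\big(T^{(N)}_i, x\big) \,.
$$
By assumption (ii) of \eqref{eq:proppnx}, there is a constant $C''$ with $\sum_x |x|^2 p(m,x) \le C'' m$ for all $m \in \N$, so the right-hand side is at most $C'' \tau^{(N)}_k$. Restricting to the event $\{\tau^{(N)}_k = n\}$ and taking expectations gives precisely $C'' n \, \P(\tau^{(N)}_k = n)$, as desired.

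I do not anticipate any substantive obstacles: the argument is essentially the standard second-moment method, exploiting the mean-zero property (i) and the diffusive variance bound (ii) from \eqref{eq:proppnx}, combined with the conditional independence structure of the increments built into \eqref{eq:bsXN}. The only routine point to check is that the asymptotic estimate (ii) can be upgraded to hold uniformly over all $m \in \N$ (not merely for $m$ large), which is automatic since $\sum_x |x|^2 p(m,x)$ is finite for each $m$, so $\sup_m m^{-1} \sum_x |x|^2 p(m,x) < \infty$.
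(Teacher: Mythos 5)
Your proposal is correct and follows essentially the same route as the paper: conditioning on the time increments, using the mean-zero property (i) to kill cross terms, the variance bound (ii) to get $\E[|S_k^{(N)}|^2 \mid \tau_k^{(N)} = n] \le C'' n$, and then Markov/Chebyshev. The only cosmetic difference is that you sum over $k$ before applying Chebyshev, whereas the paper applies Markov's inequality at fixed $k$ and then sums; the content is identical.
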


\medskip

\section{Applications to disordered systems}
\label{sec:main3}

In this section we discuss applications of our previous results 
to two marginally relevant disordered
systems: the pinning model with tail exponent $1/2$ and the $(2+1)$-dimensional
directed polymer model.
For simplicity, we focus on the case when these models are built
from the simple random walk on $\Z$ and on $\Z^2$, respectively.

Both models contain \emph{disorder}, given by a family
$\omega = (\omega_i)_{i\in\bbT}$ of i.i.d.\ random variables; $\bbT = \N$
for the pinning model, $\bbT = \N \times \Z^2$ for the directed polymer model.
We assume that
\begin{equation} \label{eq:assomega}
	\bbE[\omega_i] = 0 \,, \qquad \bbE[\omega_i^2] = 1 \,, \qquad
	\lambda(\beta) := \log \bbE[\exp(\beta\omega_i)] < \infty \quad \forall \beta > 0 \,.
\end{equation}
An important role is played by
\begin{equation}\label{eq:sigmabeta2}
	\sigma_\beta^2 := e^{\lambda(2\beta)-2\lambda(\beta)}-1 \,.
\end{equation}

Before presenting our results, in order to put them into context and to provide motivation,
we discuss the key notion of \emph{relevance of disorder}.

\subsection{Relevance of disorder}

Both the pinning model and the directed polymer model are Gibbs measures on random
walk paths, which depend on the realization of the disorder.
A key question for these models, and more generally for disordered systems,
is whether an arbitrarily small, but fixed amount of disorder is able to change
the large scale properties of the model without disorder. When the answer is positive
(resp.\ negative), the model is called \emph{disorder relevant} (resp.\ \emph{irrelevant}).
In borderline cases, 
where the answer depends on finer properties,
the model is called \emph{marginally relevant} or \emph{irrelevant}.

Important progress has been obtained in recent years in the mathematical understanding
of the relevance of disorder, in particular for the pinning model, where the problem
can be cast in terms of \emph{critical point shift} (and \emph{critical exponents}).
We refer to~\cite{G10}
for a detailed presentation of the key results and for the relevant literature.

The pinning model based on the simple random walk on $\Z$ is 
\emph{marginally relevant}, as shown in \cite{GLT10}. Sharp
estimates on the critical point shift were more recently obtained in \cite{BL18}.
For the directed polymer model based on the simple random walk on $\Z^2$,
analogous sharp results are given in \cite{BL17}, in terms of \emph{free energy} estimates.

\smallskip

In \cite{CSZ17a} we proposed a different approach to study disorder
relevance: when a model
is disorder relevant, it should be possible to suitably 
\emph{rescale the disorder strength to zero,
as the system size diverges,} and still obtain a non-trivial limiting model where disorder 
is present. Such an \emph{intermediate disorder regime}
had been  investigated in \cite{AKQ14a,AKQ14b} for the directed polymer model
based on the simple random walk on $\Z$, which is disorder relevant.
The starting point to build a non-trivial limiting model is to determine the scaling limits of the
family of \emph{partition functions}, which encode a great deal of information.

The scaling limits of partition functions
were obtained in \cite{CSZ17a} for several models that are 
disorder relevant (see also  \cite{CSZ15}).
However, the case of marginally relevant models
--- which include the pinning model on $\Z$ and
the directed polymer model on $\Z^2$ --- is much more delicate.
In \cite{CSZ17b} we showed that for such models a phase 
transition emerges on a suitable intermediate
disorder scale, and below the critical point, the family of partition functions 
converges to an explicit Gaussian random field (the solution of the additive stochastic 
heat equation, in the case of the directed polmyer on $\Z^2$). 

\smallskip

In this section we focus on a suitable window
around the critical point, which corresponds to a precise way of scaling
down the disorder strength to zero (see \eqref{eq:sigmaresc}
and \eqref{eq:sigmarescbis} below). In this critical window,
the partition functions are expected to converge to a non-trivial limiting random field,
which has fundamental
connections with singular stochastic PDEs (see the discussion in \cite{CSZ17b}).

Our new results, described
in Theorems~\ref{th:pinning} and~\ref{th:dpre} below, give sharp asymptotic estimates
for the second moment of partition functions. 
These estimates, besides providing an important piece of information by
themselves, are instrumental to investigate scaling limits. Indeed,
we proved in the recent paper \cite{CSZ18} that
the family of partition functions of 
the directed polymer on $\Z^2$ admits non-trivial random field limits, 
whose covariance exhibits logarithmic divergence along the diagonal. 
This is achieved by a third moment computation on the partition function, 
where the second moment estimates derived here play a crucial role. 

\subsection{Pinning model}
\label{sec:pinning}

Let $X = (X_n)_{n\in\N_0}$ be the simple symmetric random walk on $\Z$ with
probability and expectation denoted by $\P(\cdot)$ and $\E[\cdot]$, respectively.  We set
\begin{equation} \label{eq:un}
	u(n) := \P(X_{2n}=0) = \frac{1}{2^{2n}} \, \binom{2n}{n}
	= \frac{1}{\sqrt{\pi}} \, \frac{1}{\sqrt{n}} \, \big(1+o(1)\big) \qquad
	\text{as } n\to\infty \,.
\end{equation}
Fix a sequence of i.i.d.\ random variables $\omega = (\omega_n)_{n\in\N}$,
independent of $X$, satisfying \eqref{eq:assomega}.
The (constrained) partition function of the \emph{pinning model} is defined as follows:
\begin{equation} \label{eq:pinning}
	Z_{N}^{\beta} := \E\Big[ e^{\sum_{n=1}^{N-1} 
	(\beta \omega_n - \lambda(\beta)) \ind_{\{X_{2n}=0\}}}
	\, \ind_{\{X_{2N}=0\}}\Big] \,,
\end{equation}
where we work with $X_{2n}$ rather than $X_{n}$ to avoid periodicity issues.

Writing $Z_{N}^{\beta}$ as a polynomial chaos expansion \cite{CSZ17a}
(we review the computation in Appendix~\ref{sec:polychaos}), we obtain the following
expression for the second moment:
\begin{equation} \label{eq:ZN2}
	\bbE[(Z_{N}^{\beta})^2] = \sum_{k \ge 1} \, (\sigma_\beta^2)^{k-1} 
	\sum_{0 < n_1 < \ldots < n_{k-1} < n_k := N}
	u(n_1)^2 \, u(n_2-n_1)^2 \, \cdots \, u(n_k - n_{k-1})^2 \,,
\end{equation}
where $\sigma_\beta^2$ is defined in \eqref{eq:sigmabeta2}.
Let us define
\begin{align} \label{eq:rpinning}
	r(n) & := u(n)^2 = \frac{1}{\pi \, n} \, \big(1 + o(1)\big) \,, \\
	\label{eq:Rpinning}
	R_N &:= \sum_{n=1}^N r(n)
	= \sum_{n=1}^N \bigg\{\frac{1}{2^{2n}} \,
	\binom{2n}{n}\bigg\}^2 = \frac{1}{\pi} \, \log N \, \big(1+o(1)\big) \,,
\end{align}
and denote by $(\tau^{(N)}_k)_{k\in\N_0}$ the renewal process with
increments law given by \eqref{eq:XN}.
Then, recalling \eqref{eq:ZN2}
and \eqref{UN}, for every $N\in\N$ and $1 \le n \le N$ we can write
\begin{equation} \label{eq:repZN}
\begin{split}
	\bbE[(Z_{n}^{\beta})^2] & = \frac{1}{\sigma_\beta^2} 
	\sum_{k\ge1} \big( \sigma_\beta^2 \, R_N \big)^k \,
	\P(\tau_k^{(N)} = n) \\
	& = \frac{1}{\sigma_\beta^2} \, U_{N,\lambda}(n) \,, \qquad
	\text{where} \qquad \lambda := \sigma_\beta^2 \, R_N \,.
\end{split}
\end{equation}
As a direct corollary of Theorem~\ref{th:WR}, we have the following result.

\begin{theorem}[Second moment asymptotics for pinning model]\label{th:pinning}
Let $Z_{N}^{\beta}$ be the partition function of the pinning model based on the
simple symmetric random walk on $\Z$,
see \eqref{eq:pinning}. Define $\sigma_\beta^2$ by \eqref{eq:sigmabeta2}
and $R_N$ by \eqref{eq:Rpinning}.
Fix $\theta \in \R$ and rescale
$\beta = \beta_N$ so that
\begin{equation} \label{eq:sigmaresc}
	\sigma_{\beta_N}^2 
	= \frac{1}{R_N} \bigg(  1 + \frac{\theta}{\log N}
	\big(1+o(1)\big)  \bigg) \qquad \text{as } N \to \infty \,.
\end{equation}
Then, for any fixed $\delta > 0$, the following relation holds as $N \to \infty$:
\begin{equation} \label{eq:pinningUNas}
	\bbE[(Z_{n}^{\beta_N})^2] = \frac{(\log N)^2}{\pi \, N} 
	\, G_{\theta}(\tfrac{n}{N}) \, (1+o(1)) \,, 
	\quad \ \ 
	\text{uniformly for } \ \delta N \le n \le N \,.
\end{equation}
Moreover, the following uniform bound holds, for a
suitable constant $C \in (0,\infty)$:
\begin{equation} \label{eq:pinningunibo}
	\bbE[(Z_{n}^{\beta_N})^2] \le C \, \frac{(\log N)^2}{N} \, 
	G_{\theta} (\tfrac{n}{N}) \,,
	\qquad \forall 1 \le n \le N \,.
\end{equation}
\end{theorem}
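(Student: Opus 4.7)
The plan is to derive Theorem~\ref{th:pinning} as an essentially immediate consequence of the sharp renewal theorem, Theorem~\ref{th:WR}. The bridge has already been built: the polynomial chaos identity \eqref{eq:ZN2} (reviewed in Appendix~\ref{sec:polychaos}) collapses into the representation \eqref{eq:repZN},
\begin{equation*}
\bbE[(Z_n^\beta)^2] \,=\, \frac{1}{\sigma_\beta^2} \, U_{N,\lambda}(n), \qquad \lambda := \sigma_\beta^2 \, R_N,
\end{equation*}
so the task reduces to checking that the hypotheses of Theorem~\ref{th:WR} are met and then performing an algebraic substitution.

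First, I would verify that the kernel $r(n) = u(n)^2$ fits the framework of Section~1.2: by \eqref{eq:rpinning} we have $r(n) = \frac{1}{\pi\, n}(1+o(1))$, which is \eqref{eq:rn} with $\sfa = 1/\pi$, and $(\tau_k^{(N)})$ appearing in \eqref{eq:repZN} is exactly the associated renewal process from \eqref{eq:XN}. Setting $\lambda_N := \sigma_{\beta_N}^2 R_N$, the rescaling prescription \eqref{eq:sigmaresc} gives
\begin{equation*}
\lambda_N \,=\, 1 + \frac{\theta}{\log N}\bigl(1+o(1)\bigr),
\end{equation*}
which matches hypothesis \eqref{eq:lambdaN} verbatim.

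Second, I would invoke Theorem~\ref{th:WR} with $T = 1$. The local asymptotic \eqref{eq:UNas} yields $U_{N,\lambda_N}(n) = \frac{\log N}{N}\, G_\theta(\tfrac{n}{N})(1+o(1))$ uniformly on $\delta N \le n \le N$, and \eqref{eq:unibo} supplies the matching uniform upper bound. To conclude \eqref{eq:pinningUNas} and \eqref{eq:pinningunibo}, I would multiply both estimates by $\frac{1}{\sigma_{\beta_N}^2}$ and invoke \eqref{eq:sigmaresc} together with $R_N = \frac{\log N}{\pi}(1+o(1))$ from \eqref{eq:Rpinning}, which give
\begin{equation*}
\frac{1}{\sigma_{\beta_N}^2} \,=\, R_N\bigl(1 + o(1)\bigr) \,=\, \frac{\log N}{\pi}\bigl(1+o(1)\bigr),
\end{equation*}
producing the prefactor $\frac{(\log N)^2}{\pi\, N}$ in \eqref{eq:pinningUNas} and the bound \eqref{eq:pinningunibo} with constant $C$ inherited from \eqref{eq:unibo}.

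There is no genuine obstacle at this stage; every ingredient has been prepared. The \emph{real} work lies upstream, in proving the polynomial chaos identity \eqref{eq:ZN2} (which underlies \eqref{eq:repZN}), in establishing the sharp local estimate of Proposition~\ref{prop:sharp}, and in deducing the local renewal asymptotic \eqref{eq:UNas} from the weak limit of Proposition~\ref{prop:YN0} via the ``straddling jump'' recursion. Once Theorem~\ref{th:WR} is in hand, the present statement is a clean harvest of that machinery for a specific choice of $r(n)$ and $\lambda_N$.
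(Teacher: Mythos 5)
Your proposal is correct and matches the paper exactly: the paper labels Theorem~\ref{th:pinning} as a ``direct corollary of Theorem~\ref{th:WR}'' and provides no separate proof beyond the representation \eqref{eq:repZN}, which you reproduce and exploit in the same way. Your verification that $\lambda_N = \sigma_{\beta_N}^2 R_N$ satisfies \eqref{eq:lambdaN} and the algebra converting $\frac{1}{\sigma_{\beta_N}^2}$ into the prefactor $\frac{\log N}{\pi}(1+o(1))$ via \eqref{eq:Rpinning} is exactly the intended chain of substitutions.
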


In view of \eqref{eq:Rpinning}, it is tempting to replace $R_N$ by $\frac{1}{\pi}\log N$ in 
\eqref{eq:sigmaresc}. However, to do this properly,
a sharper asymptotic estimate on $R_N$ as $N\to\infty$ is needed.
The following result, of independent interest,
is proved in Appendix~\ref{sec:EM}.

\begin{proposition}\label{prop:EM}
As $N\to\infty$
\begin{equation} \label{eq:EM}
	R_N := \sum_{n=1}^N
	\bigg\{ \frac{1}{2^{2n}} \binom{2n}{n} \bigg\}^2 =
	\frac{\log N + \alpha}{\pi} + o(1) \,,
	\qquad \text{with} \quad
	\alpha := \gamma + \log 16 - \pi \,,
\end{equation}
where $\gamma = -\int_0^\infty \log u \, e^{-u} \, \dd u \simeq 0.577$
is the Euler-Mascheroni constant.
\end{proposition}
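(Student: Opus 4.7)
The plan is to split $R_N$ into a divergent harmonic part plus an absolutely convergent correction, and then identify the correction in closed form via a generating-function identity with the complete elliptic integral of the first kind.

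First I would apply Stirling's formula to one extra order on $u(n) = \binom{2n}{n} 4^{-n}$, which gives
$$u(n) = \frac{1}{\sqrt{\pi n}}\bigg(1 - \frac{1}{8n} + O(n^{-2})\bigg), \qquad \text{hence} \qquad u(n)^2 - \frac{1}{\pi n} = O(n^{-2}).$$
Combined with the standard expansion $\sum_{n=1}^N \frac{1}{n} = \log N + \gamma + o(1)$, this reduces \eqref{eq:EM} to the identity
$$C_\infty \,:=\, \sum_{n=1}^\infty \bigg( u(n)^2 - \frac{1}{\pi n} \bigg) \,=\, \frac{\log 16 - \pi}{\pi},$$
the series converging absolutely by the Stirling bound.

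To pin down $C_\infty$, I would introduce the generating function $F(z) := \sum_{n\geq 0} u(n)^2 z^n$ for $|z|<1$ and recognize it, via the binomial expansion of $(1-k^2 \sin^2 \theta)^{-1/2}$ and Wallis' formula $\int_0^{\pi/2}\sin^{2n}\theta\, d\theta = \frac{\pi}{2} u(n)$, as a complete elliptic integral of the first kind:
$$F(z) \,=\, \frac{2}{\pi}\, K(\sqrt{z}), \qquad K(k) \,:=\, \int_0^{\pi/2}\frac{d\theta}{\sqrt{1-k^2 \sin^2\theta}}.$$
The classical singular expansion $K(k) = \log(4/k') + o(1)$ as $k' := \sqrt{1-k^2}\downarrow 0$ (obtainable by isolating the endpoint divergence at $\theta=\pi/2$) then yields
$$F(z) \,=\, -\frac{1}{\pi}\log(1-z) + \frac{\log 16}{\pi} + o(1) \qquad \text{as } z\uparrow 1.$$
Since $-\log(1-z) = \sum_{n\geq 1} z^n/n$, Abel's theorem applied to the absolutely convergent series for $C_\infty$ gives
$$C_\infty \,=\, \lim_{z\uparrow 1}\bigg[ F(z) - 1 + \frac{1}{\pi}\log(1-z)\bigg] \,=\, \frac{\log 16}{\pi} - 1,$$
which is exactly the required value $(\alpha-\gamma)/\pi$.

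The main obstacle is the singular asymptotic of $K(k)$ at $k=1$: it is the one non-elementary ingredient, although it is classical and follows from splitting the integral near $\theta = \pi/2$ after writing $1-k^2\sin^2\theta = (k')^2 + k^2\cos^2\theta$. A fully self-contained alternative would start from the double Fourier representation $u(n)^2 = \pi^{-2}\iint_{[0,\pi]^2}(\cos\theta_1\cos\theta_2)^{2n}\, d\theta_1\, d\theta_2$ and analyze $R_N = \pi^{-2}\iint \frac{y(1-y^N)}{1-y}\,d\theta_1\, d\theta_2$ with $y=\cos^2\theta_1\cos^2\theta_2$, by isolating the four corners of $[0,\pi]^2$ where $y=1$; this produces the same constant, at the cost of a somewhat heavier bookkeeping.
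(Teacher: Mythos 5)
Your proof is correct, and it takes a genuinely different route from the paper. The paper's argument is probabilistic: it writes $R_N = \E[L_N]$ with $L_N = \sum_{n \le N} \ind_{\{S_{2n}=0\}}$ the local time at the origin of the simple random walk on $\Z^2$, decomposes $R_N = \sum_k \P(\xi_1 + \cdots + \xi_k \le N)$ via the first-return times $\xi_i$, and then extracts the constant $\alpha$ from Uchiyama's sharp tail expansion $\P(T \ge 2k) = \frac{\pi}{\log k} - \frac{\pi(\gamma + \log 16)}{(\log k)^2} + O((\log k)^{-3})$ together with a careful truncation of the correction sum. Your argument instead isolates the convergent remainder $C_\infty = \sum_{n\ge1}(u(n)^2 - \tfrac{1}{\pi n})$ and evaluates it in closed form through Abel summation and the identity $\sum_{n\ge0}u(n)^2 z^n = \tfrac{2}{\pi}K(\sqrt z)$, feeding in the classical endpoint expansion $K(k) = \log(4/k') + o(1)$. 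The bookkeeping checks out: $R_N = \frac{1}{\pi}(\log N + \gamma) + C_\infty + o(1)$ and $C_\infty = \frac{\log 16}{\pi} - 1 = \frac{\alpha - \gamma}{\pi}$ give exactly \eqref{eq:EM}. What your approach buys is self-containedness — the only nonelementary ingredient is the singular expansion of the complete elliptic integral, which is textbook and which you could also rederive from the double-integral representation you mention. What the paper's approach buys is a probabilistic interpretation of the constant $\alpha$ as encoding the sharp tail behavior of the first return of the 2D walk, which is thematically aligned with the rest of the paper; the price is a heavier reliance on Uchiyama's estimate and a nontrivial truncation argument (their $\rho_N \to 0$ step). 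Both proofs are sound; yours is arguably the cleaner one for the purpose of just establishing \eqref{eq:EM}.
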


\begin{corollary}\label{cor:sigmaresc}
Relation \eqref{eq:sigmaresc} can be rewritten as follows,
with $\alpha := \gamma + \log 16 - \pi$:
\begin{equation} \label{eq:sigmaresc2}
	\sigma_{\beta_N}^2 = \frac{\pi}{\log N} \bigg( 1 + 
	\frac{\theta - \alpha}{\log N}
	\big(1+o(1)\big) \bigg) \qquad \text{as } N \to \infty \,.
\end{equation}
\end{corollary}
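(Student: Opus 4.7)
The plan is to derive \eqref{eq:sigmaresc2} directly from the sharp asymptotic in Proposition~\ref{prop:EM}, since the two formulations \eqref{eq:sigmaresc} and \eqref{eq:sigmaresc2} differ only in how the prefactor $1/R_N$ is expanded. The entire content of the corollary is therefore a two-term asymptotic expansion of $1/R_N$ in powers of $1/\log N$, followed by a routine multiplication.

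First I rewrite Proposition~\ref{prop:EM} in multiplicative form as
\[
R_N = \frac{\log N}{\pi}\bigg(1 + \frac{\alpha}{\log N} + o\big(\tfrac{1}{\log N}\big)\bigg), \qquad \alpha := \gamma + \log 16 - \pi.
\]
Inverting via the geometric expansion $1/(1+x) = 1 - x + O(x^2)$ applied to the $O(1/\log N)$ quantity $x = \alpha/\log N + o(1/\log N)$ yields
\[
\frac{1}{R_N} = \frac{\pi}{\log N}\bigg(1 - \frac{\alpha}{\log N} + o\big(\tfrac{1}{\log N}\big)\bigg).
\]
I then substitute this into \eqref{eq:sigmaresc} and multiply out the two parenthetical factors. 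The constant terms produce $\pi/\log N$, while the two cross terms at order $1/\log N$ combine to give $(\theta-\alpha)/\log N$; the remaining contributions (the cross term at order $1/(\log N)^2$ together with the various $o$-corrections) are absorbed into an overall $o(1/\log N)$ remainder. This is precisely \eqref{eq:sigmaresc2}.

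The only real task is bookkeeping of the error terms: one needs to verify that $\frac{\theta}{\log N}(1+o(1)) \cdot \frac{\alpha}{\log N} = O(1/(\log N)^2) = o(1/\log N)$, and that the $o(1)$ multiplier already present in \eqref{eq:sigmaresc} together with the $o(1/\log N)$ correction obtained from inverting $R_N$ can be consolidated into the single $(1+o(1))$ factor attached to $(\theta-\alpha)/\log N$. No analytic difficulty is involved; the corollary is essentially an algebraic restatement of Proposition~\ref{prop:EM}, and the main (mild) obstacle is simply to organize the error terms cleanly rather than to establish any new estimate.
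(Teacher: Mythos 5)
Your proof is correct and takes the only natural route: invert the two-term expansion of $R_N$ from Proposition~\ref{prop:EM} and multiply it into \eqref{eq:sigmaresc}. The paper itself does not display a proof of Corollary~\ref{cor:sigmaresc} — it is stated immediately after Proposition~\ref{prop:EM} and left as an evident consequence — so your write-up simply makes the implicit bookkeeping explicit, and it does so correctly.

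One small remark on the notation: the final step, where you rewrite
\[
1 + \frac{\theta-\alpha}{\log N} + o\!\Big(\frac{1}{\log N}\Big)
\quad\text{as}\quad
1 + \frac{\theta-\alpha}{\log N}\big(1+o(1)\big),
\]
tacitly assumes $\theta \ne \alpha$; if $\theta=\alpha$ the right side reads $1$ exactly while the left side still carries an $o(1/\log N)$ correction. This is a harmless abuse that the paper itself commits systematically (in \eqref{eq:lambdaN}, \eqref{eq:sigmaresc}, \eqref{eq:sigmarescbis}), where the factor $(1+o(1))$ multiplying a coefficient is always meant to absorb an $o(1/\log N)$ term regardless of whether the coefficient vanishes. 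So your argument matches the paper's conventions; it would only be worth a parenthetical remark if one wished to be scrupulous about the degenerate case.
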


We stress that identifying the constant $\alpha$ in \eqref{eq:EM}
is subtle, because it is a non asymptotic
quantity (changing \emph{any single term}
of the sequence in brackets modifies the value of $\alpha$!).
To accomplish the task, in Appendix~\ref{sec:EM} we relate $\alpha$ to a truly
asymptotic property, i.e.\ the tail behavior
of the first return to zero of the simple symmetric
random walk on $\Z^2$.

\begin{remark}
From \eqref{eq:repZN}, we note that $\bbE[(Z_n^{\beta_N})^2]$ is in fact the partition 
function of a \emph{homogeneous pinning model}, see~\cite{G07}, 
with underlying renewal $\tau^{(N)}$, which has inter-arrival exponent $\alpha=0$. 
Theorem~\ref{th:pinning} effectively identifies the ``critical window'' for
such a pinning model and determines the asymptotics 
of the partition function in this critical window. 
Analogous results when $\alpha>0$ have been obtained in \cite{S09}. 
\end{remark}

\begin{remark}\label{rem:betaexpl}
Relation \eqref{eq:sigmaresc2}
can be made more explicit, by expressing 
$\sigma_{\beta_N}^2$ in terms of $\beta_N^2$.
The details are carried out in Appendix~\ref{sec:explbeta}.
\end{remark}

\begin{remark}
If one removes the constraint
$\{X_{2N}=0\}$ from \eqref{eq:pinning}, then one obtains the 
\emph{free partition function} $Z_{N}^{\beta, \mathrm{f}}$.
The asymptotic behavior of its second moment can be determined explicitly,
in analogy with Theorem~\ref{th:pinning}, see 
Appendix~\ref{sec:unibo}.
\end{remark}

\subsection{Directed polymer in random environment}
\label{sec:dpre}

Let $S = (S_n)_{n\in\N_0}$ be the simple symmetric random walk on $\Z^2$,
with probability and expectation denoted by $\P(\cdot)$ and $\E[\cdot]$, respectively.
We set
\begin{equation} \label{eq:qn}
	q_n(x) := \P(S_n = x) \,,
\end{equation}
and note that, recalling the definition \eqref{eq:un}
of $u(n)$, we can write
\begin{equation} \label{eq:P2n0}
	\sum_{x\in\Z^2} q_n(x)^2 =
	\P(S_{2n}=0) = \bigg\{ \frac{1}{2^{2n}} \, \binom{2n}{n} \bigg\}^2
	=: u(n)^2 \,,
\end{equation}
where the second equality holds because
the projections of $S$ along the two main
diagonals are independent simple random walks on $\Z/\sqrt{2}$.

Note that $\Cov[S_1^{(i)}, S_1^{(j)}] = \frac{1}{2} \, \ind_{\{i=j\}}$,
where $S_1^{(i)}$ is the $i$-th component of $S_1$, for $i=1,2$.
As a consequence, $S_n / \sqrt{n}$ converges in distribution
to the Gaussian law on $\R^2$ with density $g_{\frac{1}{2}}(\cdot)$ (recall \eqref{eq:gt}).
The random walk $S$ is periodic,
because $(n,S_n)$ takes values in
\begin{equation*}
	\Z^3_\even := \big\{ z = (z_1, z_2, z_3) \in \Z^3: \
	z_1 + z_2 + z_3 \in 2\Z\big\} \,.
\end{equation*}
Then the local central limit theorem gives that, as $n\to\infty$,
\begin{equation} \label{eq:llt0}
	n \, q_n(x) \,=\, 
	g_{\frac{1}{2}}\big(\tfrac{x}{\sqrt{n}}\big) 
	\, 2 \, \ind_{\{(n,x) \in \Z^3_\even\}}
	\,+\, o(1) \,, \qquad \text{uniformly for } x \in \Z^2 \,,
\end{equation}
where the factor $2$ is due to periodicity, because
the constraint $(n,x) \in \Z^3_\even$ restricts $x$ in a 
sublattice of $\Z^2$ whose cells have area equal to $2$.

\smallskip

Fix now a sequence of i.i.d.\ random variables $\omega = (\omega_{n,x})_{(n,x)
\in\N \times \Z^2}$ satisfying \eqref{eq:assomega},
independent of $S$.
The (constrained) partition function of the \emph{directed polymer
in random environment} is defined as follows:
\begin{equation} \label{eq:dpre}
\begin{split}
	\bsZ_{N}^{\beta}(x)
	&:= \E\Big[ e^{\sum_{n=1}^{N-1} (\beta \omega_{n,S_n} - \lambda(\beta)) }
	\, \ind_{\{S_{N}=x\}}\Big] \\
	&= \E\Big[ e^{\sum_{n=1}^{N-1} \sum_{z\in\Z^2} 
	(\beta \omega_{n,z} - \lambda(\beta)) \ind_{\{S_n = z\}} }
	\, \ind_{\{S_{N}=x\}}\Big] \,.
\end{split}
\end{equation}
In analogy with \eqref{eq:ZN2} (see Appendix~\ref{sec:polychaos}),
we have a representation for the second moment:
\begin{equation} \label{eq:ZNdp2}
\begin{split}
	\bbE\big[\big(\bsZ_{N}^{\beta}(x) \big)^2 \big] = \sum_{k \ge 1} \, (\sigma_\beta^2)^{k-1} 
	\sumtwo{0 < n_1 < \ldots < n_{k-1} < n_k = N}{x_1, \ldots, x_k \in \Z^2: \
	x_k = x}
	& q_{n_1}(x_1)^2 \, q_{n_2-n_1}(x_2 - x_1)^2 \, \cdot \\
	& \qquad \cdots \, q_{n_k - n_{k-1}}(x_k - x_{k-1})^2 \,.
\end{split}
\end{equation}

To apply the results in Section~\ref{sec:main2}, we define
for $(n,x) \in \N \times \Z^2$
\begin{equation*}
	p(n,x) := \frac{q_n(x)^2}{u(n)^2} \,, \qquad \text{where} \qquad
	u(n) := \frac{1}{2^{2n}} \, \binom{2n}{n} \,.
\end{equation*}
Note that $p(n,\cdot)$ is a probability kernel on $\Z^2$, by \eqref{eq:P2n0}.
Since $g_t(x)^2 = \frac{1}{4\pi t} g_{t/2}(x)$ (see \eqref{eq:gt}),
it follows by \eqref{eq:llt0} and \eqref{eq:un} that,
uniformly for $x\in\Z^2$,
\begin{equation}\label{eq:lltq}
	n \, p(n,x) \,=\, 
	g_{\frac{1}{4}}\big(\tfrac{x}{\sqrt{n}}\big) \, 
	2 \, \ind_{\{(n,x) \in \Z^3_\even\}}
	\,+\, o(1) \,.
\end{equation}
Thus $p(n,\cdot)$ fulfills
condition (iii) in \eqref{eq:proppnx} with $\sfc = \frac{1}{4}$
(the multiplicative factor 2 is a minor correction, due to periodicity).
Conditions (i) and (ii) in \eqref{eq:proppnx} are also fulfilled.

Let $(\tau^{(N)}, S^{(N)}) = (\tau^{(N)}_k, S^{(N)}_k)_{k\ge 0}$ be
the random walk with increment law given by \eqref{eq:bsXN},
where $r(n)$ and $R_N$ are the same as in \eqref{eq:rpinning}-\eqref{eq:Rpinning}.
More explicitly:
\begin{equation} \label{eq:bsXNb}
	\P\big((\tau^{(N)}_1, S^{(N)}_1) = (n,x)\big) 
	:= \frac{1}{R_N} \, q_n(x)^2 \, \ind_{\{1,\ldots, N\}}(n) \,.
\end{equation}
Recalling \eqref{eq:ZNdp2} and \eqref{UN2}, we can write
\begin{equation} \label{eq:ZNdp2b}
\begin{split}
	\bbE\big[\big(\bsZ_{n}^{\beta}(x)\big)^2 \big] 
	&= \frac{1}{\sigma_\beta^2} 
	\sum_{k\ge1} \big(\sigma_\beta^2 \,  R_N \big)^k \,
	\P(\tau_k^{(N)} = n, \, S_k^{(N)}=x) \\
	& = \frac{1}{\sigma_\beta^2} \, \bsU_{N,\lambda}(n,x) \,, \qquad
	\text{where} \qquad \lambda := \sigma_\beta^2 \,  R_N \,.
\end{split}
\end{equation}
As a corollary of Theorem~\ref{th:WR2},
taking into account periodicity, we have the following result.

\begin{theorem}[Second moment asymptotics for directed polymer]\label{th:dpre}
Let $\bsZ_{N}^{\beta}(x)$ be the partition function of the directed 
polymer in random environment based on the
simple symmetric random walk on $\Z^2$,
see \eqref{eq:dpre}. Define $\sigma_\beta^2$ by \eqref{eq:sigmabeta2}
and $R_N$ by \eqref{eq:Rpinning}.
Fix $\theta \in \R$ and rescale
$\beta = \beta_N$ so that
\begin{equation} \label{eq:sigmarescbis}
	\sigma_{\beta_N}^2 
	= \frac{1}{R_N} \bigg(  1 + \frac{\theta}{\log N}
	\big(1+o(1)\big)  \bigg) \qquad \text{as } N \to \infty \,.
\end{equation}
For any fixed $\delta > 0$, the following relation holds as $N \to \infty$:
\begin{equation} \label{eq:dpreUNas}
\begin{split}
	& \bbE\big[\big(\bsZ_{n}^{\beta_N}(x)\big)^2\big] = \frac{(\log N)^2}{\pi \, N^2} 
	\, G_{\theta}\big(\tfrac{n}{N}\big) \, g_{\frac{n}{4N}}
	\big(\tfrac{x}{\sqrt{N}}\big)
	\, 2 \, \ind_{\{(n,x) \in \Z^3_\even\}} \, (1+o(1)) \,, \\
	& \rule{0pt}{1.5em}\qquad
	\text{uniformly for} \ \, \delta N \le n \le N , \
	|x| \le \tfrac{1}{\delta}\sqrt{N}  \,.
\end{split}
\end{equation}
\end{theorem}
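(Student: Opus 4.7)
The plan is to deduce Theorem~\ref{th:dpre} as a direct corollary of the space-time renewal theorem (Theorem~\ref{th:WR2}), via the second-moment identity \eqref{eq:ZNdp2b}. The role of the proof is essentially bookkeeping: one has to match hypotheses, check periodicity, and chase constants.

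\smallskip

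First, I would observe that \eqref{eq:ZNdp2b} expresses $\bbE[(\bsZ_n^{\beta_N}(x))^2]$ as $\sigma_{\beta_N}^{-2}\,\bsU_{N,\lambda_N}(n,x)$, where the renewal process $(\tau^{(N)},S^{(N)})$ is built from the kernel \eqref{eq:bsXNb}. Setting $\lambda_N := \sigma_{\beta_N}^2\,R_N$, the hypothesis \eqref{eq:sigmarescbis} gives exactly $\lambda_N = 1 + \theta/\log N\,(1+o(1))$, which matches the scaling required in Theorem~\ref{th:WR2}. Furthermore, the sequence $r(n)=u(n)^2$ satisfies \eqref{eq:rn} with $\sfa = 1/\pi$ (by \eqref{eq:un}), and the kernel $p(n,x)=q_n(x)^2/u(n)^2$ satisfies the local limit \eqref{eq:lltq} with $\sfc = 1/4$; moreover its first two moments obey conditions (i)--(ii) of \eqref{eq:proppnx}, inherited from $S$.

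\smallskip

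Next, I would apply Theorem~\ref{th:WR2} (with $d=2$, $\sfc=1/4$) to obtain, uniformly on the range $\delta N \le n \le N$, $|x| \le \delta^{-1}\sqrt{N}$,
\begin{equation*}
\bsU_{N,\lambda_N}(n,x) = \frac{\log N}{N^{2}}\,G_\theta\!\big(\tfrac{n}{N}\big)\,g_{\frac{n}{4N}}\!\big(\tfrac{x}{\sqrt N}\big)\cdot 2\,\ind_{\{(n,x)\in\Z^3_{\even}\}}\,(1+o(1)).
\end{equation*}
Substituting into \eqref{eq:ZNdp2b} and using $\sigma_{\beta_N}^{-2}= R_N/\lambda_N \sim \log N/\pi$ (from \eqref{eq:Rpinning} and $\lambda_N \to 1$) yields exactly \eqref{eq:dpreUNas}.

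\smallskip

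The only nontrivial point is the periodicity of the simple random walk on $\Z^2$: Theorem~\ref{th:WR2} is stated for kernels $p(n,\cdot)$ satisfying the aperiodic local limit \eqref{eq:proppnx}(iii), whereas here $p(n,x)$ is supported on the sublattice $\{(n,x)\in\Z^3_{\even}\}$, which is responsible for the factor $2\,\ind_{\{(n,x)\in\Z^3_{\even}\}}$ in \eqref{eq:lltq} and \eqref{eq:dpreUNas}. The main obstacle is therefore verifying that the proof of Theorem~\ref{th:WR2} goes through in the periodic case. I would handle this by inspecting its derivation: the weak convergence in Proposition~\ref{prop:YN01} is insensitive to periodicity, and the reduction from averaged to local statements via the recursive decomposition according to the straddling jump works identically on the sublattice, with the periodicity factor $2$ propagating coherently through every step. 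Alternatively, one can decouple the two diagonal projections of $S$ (which are independent simple random walks on $\Z/\sqrt 2$), restrict to $n$ even, and invoke the theorem on each coordinate separately, recovering the same factor.
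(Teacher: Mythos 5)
Your proposal is correct and follows essentially the same route as the paper, which states the result as a direct corollary of Theorem~\ref{th:WR2} via the identity \eqref{eq:ZNdp2b}, with periodicity supplying the factor $2\,\ind_{\{(n,x)\in\Z^3_\even\}}$. Your verification of the constants ($\sfa=1/\pi$, $\sfc=1/4$, $\sigma_{\beta_N}^{-2}\sim\log N/\pi$) and your discussion of why the periodic local limit \eqref{eq:lltq} slots into condition \eqref{eq:proppnx}(iii) are exactly the bookkeeping the paper leaves implicit.
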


\begin{remark}
Relation \eqref{eq:sigmarescbis} 
can be equivalently rewritten as relation \eqref{eq:sigmaresc2},
as explained in Corollary~\ref{cor:sigmaresc}. These
conditions on $\sigma_{\beta_N}^2$ can be
explicitly reformulated in terms of $\beta_N^2$,
see Appendix~\ref{sec:explbeta} for details.
\end{remark}

\begin{remark}
Also for the directed polymer model we can define a 
\emph{free partition function} $\bsZ_{N}^{\beta,\mathrm{f}}$,
removing the constraint
$\{S_{2N}=x\}$ from \eqref{eq:dpre}. 
The asymptotic behavior of its second moment is determined in
Appendix~\ref{sec:unibo}.
\end{remark}

\medskip

\section{Preliminary results}
\label{sec:conve}

In this section we prove Propositions~\ref{prop:YN0}, \ref{prop:asGthetac},
\ref{th:bsYY}, and~\ref{prop:YN01}.

\smallskip

We start with Propositions~\ref{prop:YN0} and~\ref{prop:YN01},
for which
we prove convergence in the sense of finite-dimensional distributions.
It is not difficult to obtain convergence in the Skorokhod topology,
but we omit it for brevity, since we do not need such results.

\begin{proof}[Proof of Proposition~\ref{prop:YN0}]

We recall that the renewal process $\tau_k^{(N)}$
was defined in \eqref{eq:deftau}. We set
\begin{equation}\label{eq:YN}
	Y^{(N)}_s := \frac{\tau^{(N)}_{\lfloor s \, \log N \rfloor}}{N}.
\end{equation}

Note that the process $Y^{(N)}_s$ has independent and stationary increments
(for $s \in \frac{1}{\log N}\N_0$), hence the convergence of its finite-dimensional
distributions follows if we show that
\begin{equation}\label{eq:convindist}
	Y^{(N)}_s \, \xrightarrow[\,N\to\infty\,]{} \, Y_s
	\quad \text{in distribution} \,
\end{equation}
for every fixed $s \in [0,\infty)$.
This could be proved by checking the convergence of
Laplace transforms. We give a more direct proof,
which will be useful in the proof of Proposition~\ref{prop:YN01}.

\smallskip

Fix $\epsilon > 0$ and let $\Xi^{(\eps)}$ be
a Poisson Point Process on $[\eps, 1]$ with intensity measure $s \frac{\dd t}{t}$. 
More explicitly, we can write
\begin{equation*}
	\Xi^{(\eps)} = \{t^{(\eps)}_i\}_{i=1,\ldots, \cN^{(\eps)}} \,,
\end{equation*}
where the number of points $\cN^{(\eps)}$ 
has a Poisson distribution:
\begin{equation} \label{eq:lambdaeps}
	\cN^{(\eps)} \sim \mathrm{Pois}(\lambda^{(\eps)}) \,, \qquad \text{where} \qquad
	\lambda^{(\eps)} = \int_\epsilon^1 s \, \frac{\dd t}{t} = s \, \log 1/\epsilon \,,
\end{equation}
while $(t^{(\eps)}_i)_{i\in\N}$ are i.i.d.\ random variables with law
\begin{equation} \label{eq:tieps}
	\P(t^{(\eps)}_i > x)
	= \frac{\int_x^1 s \, \frac{\dd t}{t}}{\int_\eps^1 s \, \frac{\dd t}{t}}
	= \frac{\log x}{\log \epsilon}
	\qquad \text{for} \quad x \in [\epsilon, 1] \,.
\end{equation}

We define
\begin{equation} \label{eq:Yeps}
	Y^{(\eps)}_s := \sum_{t \in\Xi^{(\eps)}} t
	= \sum_{i=1}^{\cN^{(\eps)}} t^{(\eps)}_i \,,
\end{equation}
which is a compound Poisson random variable. Its Laplace transform equals
\begin{equation*}
	\E[e^{-\lambda Y^{(\eps)}_s}]
	= \exp \bigg( -s \int_\epsilon^1 \frac{1-e^{-\lambda t}}{t} \, \dd t \bigg) \,,
\end{equation*}
from which it follows that $\lim_{\eps \to 0} Y^{(\eps)}_s = Y^s$ in distribution
(recall \eqref{eq:LapY}).

Next we define
\begin{equation} \label{eq:YNeps}
	Y^{(N, \eps)}_s  := \frac{1}{N}\sum_{i\in I^{(N, \eps)}_s} T^{(N)}_i \,, \qquad
	\text{where} \qquad
	I^{(N, \eps)}_s:= 
	\big\{ 1\leq i\leq \lfloor s \log N\rfloor:
	\ T^{(N)}_i> \eps N \big\} \,.
\end{equation}
Note that, by \eqref{eq:rn}-\eqref{eq:RN}, for some constant $C \in (0,\infty)$ we can write
\begin{equation}\label{eq:YNapprox1}
\begin{aligned}
\E\big[ \big| Y^{(N)}_s-Y^{(N, \eps)}_s
\big| \big] & = \frac{1}{N} \E\Bigg[\sum_{i\notin I^{(N, \eps)}_s} T^{(N)}_i \Bigg] =
 \frac{\lfloor s\log N\rfloor}{N}  \E\Big[T^{(N)}_1\ind_{\{T^{(N)}_1\leq \eps N\}} \Big] \\
& =  \frac{\lfloor s\log N\rfloor}{N}  \sum_{n=1}^{\lfloor \eps N \rfloor}n \,\frac{r(n)}{R_N}
\le C \, \frac{\lfloor s\log N\rfloor}{N} \frac{\lfloor \eps N \rfloor}{\log N} 
\le C \, \eps s \,.
\end{aligned}
\end{equation}
Thus $Y^{(N)}_s$ and $Y^{(N, \eps)}_s$ are close in distribution
for $\eps > 0$ small, uniformly in $N\in\N$.

The proof of \eqref{eq:convindist} will be completed
if we show that
$\lim_{N\to\infty} Y^{(N, \eps)}_s = Y^{(\eps)}_s$ in distribution,
for any fixed $\epsilon > 0$.
Let us define the point process
\begin{equation*}
	\Xi^{(N, \eps)}:= \bigg\{
	t^{(N,\eps)}_i := \frac{1}{N}T^{(N)}_i: \ \ i \in I^{(N, \eps)}_s \bigg\} \,,
\end{equation*}
so that we can write
\begin{equation*}
	Y^{(N,\eps)}_s := \sum_{t \in\Xi^{(N,\eps)}} t 
	= \sum_{i \in I_s^{(N, \eps)}} t^{(N,\eps)}_i \,.
\end{equation*}
It remains to show that $\Xi^{(N, \eps)}$
converges in distribution to $\Xi^{(\eps)}$ as $N\to\infty$ (recall \eqref{eq:Yeps}).
\begin{itemize}
\item The number of points $|I^{(N, \eps)}_s|$
in $\Xi^{(\eps)}$ has a Binomial distribution $\mathrm{Bin}(n,p)$, with
\begin{equation*}
	n = \lfloor s \log N \rfloor \,, \qquad
	p = \P(T_1^{(N)} > \epsilon N) \sim \frac{\log 1/\epsilon}{\log N} \,,
\end{equation*}
hence as $N\to\infty$ it converges in distribution to $\cN^{(\eps)}
\sim \mathrm{Pois}(\lambda^{(\eps)})$, see \eqref{eq:lambdaeps}.

\item Each point $t^{(N,\eps)}_i \in \Xi^{(N, \eps)}$ has the law of $\frac{1}{N}T_1^{(N)}$
conditioned on $T_1^{(N)} > \eps N$, and it follows by \eqref{eq:rn}-\eqref{eq:RN}
that as $N\to\infty$ this converges in distribution to $t^{(\eps)}_1$, see \eqref{eq:tieps}.
\end{itemize}
This completes the proof of Proposition~\ref{prop:YN0}.
\end{proof}

\begin{proof}[Proof of Proposition~\ref{prop:YN01}]

We recall that the random walk $(\tau_k^{(N)}, S_k^{(N)})$ 
was introduced in \eqref{eq:tauS}.
We introduce the shortcut
\begin{equation} \label{eq:bsYN}
	\bsY^{(N)}_s := (Y^{(N)}_s, V^{(N)}_s):=  \Bigg( \frac{\tau^{(N)}_{\lfloor s \, \log N \rfloor}}{N},
	\frac{S^{(N)}_{\lfloor s \, \log N \rfloor}}{\sqrt{N}} \Bigg), \quad  s\geq 0.
\end{equation}
In analogy with \eqref{eq:convindist}, it suffices to show that for every fixed
$s \in [0,\infty)$
\begin{equation}\label{eq:convindist2}
	\bsY^{(N)}_s \, \xrightarrow[\,N\to\infty\,]{} \, 
	\bsY_s := (Y_s, V^{\sfc}_s)
	\quad \text{in distribution} \,.
\end{equation}

Fix $\eps > 0$ and recall that $Y^{(\eps)}_s$ was defined in \eqref{eq:Yeps}. 
With Proposition~\ref{th:bsYY} in mind, we define
\begin{equation} \label{eq:Vepss}
	V^{(\eps)}_s := \sqrt{\sfc} \, W_{Y^{(\eps)}_s} \,,
\end{equation}
where $W$ is an independent Brownian motion on $\R^d$.
Since $\lim_{\eps \to 0} Y^{(\eps)}_s = Y_s$ in distribution,
recalling Proposition~\ref{th:bsYY} we see that for every fixed $s \in [0,\infty)$
\begin{equation*}
	\bsY^{(\eps)}_s := (Y^{(\eps)}_s, V^{(\eps)}_s )
	\, \xrightarrow[\,\eps\to 0\,]{d}\, \bsY_s  = (Y_s, V^{\sfc}_s) \,.
\end{equation*}

Recall the definition \eqref{eq:YNeps} of $Y^{(N,\eps)}_s$ and
$I^{(N,\eps)}_s$. We define similarly
\begin{equation} \label{eq:VNeps}
	V^{(N, \eps)}_s := 
	\frac{1}{\sqrt N}\sum_{i\in I^{(N,\eps)}_s} X^{(N)}_i \,.
\end{equation}
We showed in \eqref{eq:YNapprox1} that 
$Y^{(N, \eps)}_s$ approximates $Y^{(N)}_s$ in $L^1$, for $\eps > 0$ small.
We are now going to show that 
$V^{(N, \eps)}_s$ approximates $V^{(N)}_s$ in $L^2$.
Recalling \eqref{eq:bsXN}, \eqref{eq:proppnx}, we can write
\begin{equation}\label{eq:EX12}
	\E\big[\big| X^{(N)}_1\big|^2 \,\big|\, T^{(N)}_1=n\big] 
	= \sum_{x\in\Z^2} | x |^2 \, p(n,x)
	\leq c\,  n \,.
\end{equation}
Since conditionally on $(T^{(N)}_i)_{i\notin I^{(N, \eps)}_s}$, $(X^{(N)}_i)_{i\notin I^{(N, \eps)}_s}$ 
are independent with mean $0$, we have
\begin{equation}\label{eq:YNapprox2}
\begin{aligned}
\E\big[ \big| V^{(N)}_s-V^{(N, \eps)}_s\big|^2\big] 
& = \frac{1}{N}\E\big[\big|\sum_{i\notin I^{(N, \eps)}_s}  X^{(N)}_i\big|^2\big] \\
& \leq \frac{c}{N} \E\big[ \sum_{i\notin I^{(N, \eps)}_s} T^{(N)}_i\big] = c\E[Y^{(N)}_s-Y^{(N, \eps)}_s]
\le c \, C \, \eps \, s \,,
\end{aligned}
\end{equation}
where we have applied \eqref{eq:YNapprox1}.
This, together with \eqref{eq:YNapprox1}, 
proves that we can approximate $\bsY^{(N)}_s$ by $\bsY^{(N, \eps)}_s$ 
in distribution,
uniformly in $N$, by choosing $\eps$ small.

To complete the proof of \eqref{eq:convindist2}, it remains to show that, for every fixed $\eps > 0$,
\begin{equation}\label{eq:convindisteps}
	\bsY^{(N,\eps)}_s
	:= \big(Y^{(N,\eps)}_s, V^{(N,\eps)}_s \big) \, \xrightarrow[\,N\to\infty\,]{} \, 
	\bsY^{(\eps)}_s = (Y^{(\eps)}_s, V^{(\eps)}_s )
	\quad \text{in distribution} \,,
\end{equation}
where $V^{(\eps)}_s$ was defined in \eqref{eq:Vepss}.
In the proof of Proposition~\ref{prop:YN0} we showed that
$\Xi^{(N, \eps)}$ converges in distribution to $\Xi^{(\eps)}$
as $N\to\infty$.
By Skorohod's representation theorem, we can construct a coupling such that
$\Xi^{(N, \eps)}$ converges almost surely to $\Xi^{(\eps)}$,
that is the number and sizes of jumps of $Y^{(N,\eps)}_s$ 
converge almost surely to those of $Y^{(\eps)}_s$. 
Given a sequence of jumps of $(Y^{(N, \eps)}_s)_{N\in\N}$, 
say $t^{(N,\eps)}_{i_N} \to t^{(\eps)}_i$ for some jump
$t^{(\eps)}_i$ of $Y^{(\eps)}_s$, we have that 
$X^{(N)}_{i_N}/\sqrt{N}$ converges in distribution to a centered Gaussian random variable 
with covariance matrix $(\sfc \, t_i^{(\eps)} \, I)$, 
by the definition of $X^{(N)}_{i_N}$ in \eqref{eq:bsXN} 
and the local limit theorem in \eqref{eq:proppnx}. 
Therefore, conditionally on all the jumps, the random variables $V^{(N, \eps)}_s$ in
\eqref{eq:VNeps} converges in distribution to the Gaussian 
law with covariance matrix
\begin{equation*}
	\sum_{i=1}^{\cN^{(\eps)}} (\sfc \, t_i^{(\eps)} \, I)
	= \sfc \, Y^{(\eps)}_s \, I\,,
\end{equation*}
which is precisely the law of $V_s^{(\eps)} := \sqrt{\sfc} \, W_{Y^{(\eps)}_s}$.
This proves \eqref{eq:convindisteps}.
\end{proof}

\begin{proof}[Proof of Proposition~\ref{prop:asGthetac}]
Note that $\P(Y_s \le 1)= e^{-\gamma s}/\Gamma(s+1)$, by 
the first line of \eqref{eq:scalingf}.
With the change of variable $u = (\log\frac{1}{t})s $ in \eqref{eq:G0+}, we can write
\begin{equation*}
\begin{split}
	G_\theta(t)
	& = \frac{1}{t} \int_0^\infty
	s \, e^{(\log t )s} \, e^{\theta s}
	\, \P(Y_s \le 1) \, \dd s \\
	& = \frac{1}{t (\log\frac{1}{t})^2} \int_0^\infty u \, e^{-u} \,
	e^{\frac{\theta}{\log (1/t)} u}
	\, \P(Y_{u / \log(1/t)} \le 1) \, \dd u \,.
\end{split}
\end{equation*}
Note that $\P(Y_{u / \log(1/t)} \le 1) = 1 - O(\frac{1}{(\log (1/t))^{2}})$ as $t\downarrow 0$,
for any fixed $u > 0$,
by \eqref{eq:asrip}.
Expanding the exponential,
as $t \downarrow 0$, we obtain  by dominated convergence
\begin{equation*}
\begin{split}
	G_\theta(t)
	& = \frac{1}{t (\log\frac{1}{t})^2}
	\Bigg\{ \int_0^\infty u \, e^{-u} \, \dd u
	\ + \ \frac{\theta}{\log (1/t)}
	\int_0^\infty u^2 \, e^{-u} \, \dd u
	\ + \ O\bigg(\frac{1}{(\log (1/t))^{2}} \bigg) \Bigg\} \,,
\end{split}
\end{equation*}
which coincides with \eqref{eq:Gthetacas}.
\end{proof}

\begin{proof}[Proof of Proposition~\ref{th:bsYY}]
It suffices to compute the joint Laplace transform of
$(Y_s, \, \sqrt{\sfc}\, W_{Y_s})$
and show that it agrees with \eqref{eq:bsLapY}.
For $\rho \in \R^2$, $s \ge 0$, $t > 0$, by independence of $Y$ an $W$,
\begin{equation*}
	\E[e^{\langle \rho, \sqrt{\sfc}\,W_{Y_s}\rangle} \,|\, Y_s = t] =
	\E[e^{\langle \rho, \sqrt{\sfc}\,W_{t}\rangle}]
	= \E[e^{\sqrt{\sfc \, t} \, \langle \rho , W_{1}\rangle}]
	= e^{\frac{1}{2} \sfc  |\rho|^2 t } \,.
\end{equation*}
Then for $\lambda \in \R$,
\begin{equation*}
	\E[e^{\lambda Y_s + \langle \rho, \sqrt{\sfc}\, W_{Y_s}\rangle} ] =
	\E[e^{(\lambda + \frac{1}{2} \sfc |\rho|^2) Y_s} ] =
	\exp \bigg\{s \, \int_0^1 (e^{(\lambda + \frac{1}{2} \sfc |\rho|^2) t} - 1) \,
	\frac{1}{t} \, \dd t \bigg\} \,,
\end{equation*}
where we have applied \eqref{eq:LapY}. It remains to observe that, by explicit computation,
\begin{equation} \label{eq:makingint}
\begin{split}
	e^{(\lambda + \frac{1}{2} \sfc |\rho|^2) t} - 1 &= \int_{\R^2}
	(e^{\lambda t + \langle \rho, x \rangle} - 1)
	\, g_{\sfc t}(x) \, \dd x \,,
\end{split}
\end{equation}
which gives \eqref{eq:bsLapY}.
\end{proof}

\medskip

\section{Proof of Proposition~\ref{prop:sharp}}
\label{sec:renewal}

This section is devoted to the proof of Proposition~\ref{prop:sharp}.
Let us rewrite relation \eqref{eq:pezzofin0intro}:
\begin{equation}\label{eq:pezzofin0}
	\P\big(\tau_k^{(N)}=n\big) 
	\ \le \ C \, k \, \P\big(T_1^{(N)} = n\big) \,
	\, \P\big(T_1^{(N)} \le n\big)^{k-1}
	\, e^{-\frac{c \, k}{\log n + 1} \, \log^+\frac{c\, k}{\log n + 1}}  \,.
\end{equation}
The strategy, as in
\cite{AB16}, is to isolate the contribution of the largest increment
$T^{(N)}_i$.
Our analysis is complicated by the fact that our renewal processes $\tau^{(N)}$ 
varies with $N\in\N$.

\medskip

Before proving Proposition~\ref{prop:sharp}, we derive some useful consequences.
We recall that the renewal process $(\tau^{(N)}_k)_{k\ge 0}$ 
was defined in \eqref{eq:deftau}.

\begin{proposition}\label{LLTupper}
There are constants $C \in (0,\infty)$,
$c \in (0,1)$ and, for every $\epsilon > 0$, $N_\epsilon \in \N$
such that for all $N \ge N_\epsilon$, $s \in (0,\infty)
\cap \frac{1}{\log N}\N$, $t \in (0,1] \cap \frac{1}{N}\N$ we have
\begin{equation}\label{eq:pezzofinlevy}
\begin{split}
	\P\big(\tau^{(N)}_{s \log N} = t N \big) 
	& \ \le \ C \, \frac{1}{N} \, \frac{s}{t} \, t^{(1-\epsilon)s} \,
	e^{- c s \, \log^+ (cs)} \,.
\end{split}
\end{equation}
Recalling that $f_s(t)$ is the density of $Y_s$, see \eqref{eq:scalingf},
it follows that for $N \in \N$ large enough
\begin{equation}\label{eq:pezzofinlevydens}
\begin{split}
	\P\big(\tau^{(N)}_{s \log N} = t N \big) 
	& \ \le \ C' \, \frac{1}{N} \, f_{cs}(t) \,.
\end{split}
\end{equation}
\end{proposition}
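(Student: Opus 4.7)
The plan is to substitute $k=s\log N$ (an integer by the hypothesis $s\in\frac{1}{\log N}\N$) and $n=tN$ directly into the inequality of Proposition~\ref{prop:sharp}, bound each of the three resulting factors, and then rewrite the result in terms of $f_{cs}(t)$ to get \eqref{eq:pezzofinlevydens}. One can restrict throughout to $n\ge k$, since otherwise $\P(\tau_k^{(N)}=n)=0$ and the bound is trivial; in particular this forces $n\to\infty$ whenever $N$ does.

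First I would handle the prefactor and the exponential factor, which are easy. By the asymptotics \eqref{eq:rn}--\eqref{eq:RN}, one has $k\,\P(T_1^{(N)}=n)=s\log N\cdot r(tN)/R_N\le C\,s/(tN)$ for $N\ge N_\epsilon$, which gives the $\frac{1}{N}\cdot\frac{s}{t}$ piece in \eqref{eq:pezzofinlevy}. For the exponential factor, since $\log n+1\le(1+o(1))\log N$ uniformly in $n\le N$, one has $\frac{ck}{\log n+1}\ge c's$ for any fixed $c'<c$ (for $N$ large enough), so this factor is at most $\exp(-c's\log^+(c's))$.

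The delicate step is controlling $\P(T_1^{(N)}\le n)^{k-1}=(R_{tN}/R_N)^{s\log N-1}$. Writing $R_m=\sfa\log m\,(1+\eta_m)$ with $\eta_m\to0$, we have
\[
\frac{R_{tN}}{R_N}=\Big(1+\frac{\log t}{\log N}\Big)\cdot\frac{1+\eta_{tN}}{1+\eta_N}.
\]
Taking logarithms (using $\log(1+x)\le x$) and multiplying by $k-1\sim s\log N$ yields
\[
(k-1)\log\frac{R_{tN}}{R_N}\le s\log t+(s\log N)\cdot O(|\eta_{tN}|+|\eta_N|).
\]
For any fixed $\epsilon>0$, by choosing $N_\epsilon$ large enough that the error term is majorised by $\epsilon s|\log t|$ in the relevant range of $t$ (the residual region where $t$ is too close to $1$ being dealt with separately, using that both $(R_{tN}/R_N)^{k-1}$ and $t^{(1-\epsilon)s}$ are then close to $1$), one obtains $(R_{tN}/R_N)^{k-1}\le C\,t^{(1-\epsilon)s}$. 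This produces the exponent in \eqref{eq:pezzofinlevy}.

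Finally, to deduce \eqref{eq:pezzofinlevydens}, I would pick a constant $c_2\in(0,\min(c,1-\epsilon))$ and use the explicit expression $f_{c_2s}(t)=\frac{c_2s\,t^{c_2s-1}e^{-\gamma c_2s}}{\Gamma(c_2s+1)}$ for $t\in(0,1]$ from \eqref{eq:scalingf}. Up to constants, the ratio of the right-hand side of \eqref{eq:pezzofinlevy} to $\frac{1}{N}f_{c_2s}(t)$ becomes
\[
t^{(1-\epsilon-c_2)s}\,\Gamma(c_2s+1)\,\exp(\gamma c_2s-cs\log^+(cs)),
\]
and the $t$-exponent is non-negative so that the $t$-factor is bounded by $1$ for $t\le 1$; by Stirling, $\log\Gamma(c_2s+1)\sim c_2s\log(c_2s)-c_2s$, which is dominated by $cs\log(cs)$ as $s\to\infty$ since $c_2<c$, while for bounded $s$ everything is trivially bounded. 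Hence this expression is bounded uniformly in $s$, yielding \eqref{eq:pezzofinlevydens}. The main obstacle is the careful propagation of the $o(1)$ correction in $R_n=\sfa\log n(1+o(1))$ through the exponentiation by $k\sim s\log N$; this is precisely why the statement carries the slightly weaker exponent $(1-\epsilon)s$ instead of $s$, and why $N_\epsilon$ must be allowed to depend on $\epsilon$.
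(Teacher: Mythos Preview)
Your overall plan is the same as the paper's: substitute $k=s\log N$, $n=tN$ into Proposition~\ref{prop:sharp} and bound each factor separately; then compare with $f_{cs}(t)$ via Stirling to deduce \eqref{eq:pezzofinlevydens}. Your treatment of the prefactor $k\,\P(T_1^{(N)}=n)$, of the super-exponential factor, and of the passage from \eqref{eq:pezzofinlevy} to \eqref{eq:pezzofinlevydens} are all fine.

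The gap is in your bound on $\P(T_1^{(N)}\le n)^{k-1}=(R_{tN}/R_N)^{k-1}$. After writing $R_m=\sfa\log m\,(1+\eta_m)$ and taking logarithms, you obtain an error term of size $(s\log N)\cdot O(|\eta_{tN}|+|\eta_N|)$ and claim it can be absorbed into $\epsilon s|\log t|$. But the hypothesis only gives $\eta_m=o(1)$ with \emph{no rate}, so after dividing by $s$ you would need $(\log N)\,O(|\eta_{tN}|+|\eta_N|)\le \epsilon|\log t|$, and for fixed $t\in(0,1)$ the right-hand side is a constant while the left-hand side need not stay bounded (nothing prevents, e.g., $\eta_N$ of order $1/\log\log N$). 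Your separate treatment of ``$t$ close to $1$'' addresses the wrong endpoint and is itself not uniform in $s$.

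The paper avoids this by never expanding $R_{tN}$ directly. It uses $R_r/R_N=1-(R_N-R_r)/R_N\le \exp\big(-(R_N-R_r)/R_N\big)$ and bounds the \emph{tail} $R_N-R_r=\sum_{n>r}r(n)$ via the pointwise assumption \eqref{eq:rn}, which for $r\ge N^\theta$ involves only $n$ large and hence gives $(R_N-R_r)/R_N\ge(1-\epsilon)\log(N/r)/\log N$ uniformly. For $r\le N^\theta$ it uses the crude bound $\P(T_1^{(N)}\le r)\le c_1\theta=e^{-1}=(1/N)^{1/\log N}\le(r/N)^{1/\log N}$. Combining the two ranges yields $\P(T_1^{(N)}\le r)\le (r/N)^{(1-\epsilon)/\log N}$ for all $r=1,\ldots,N$, from which $(R_n/R_N)^{k-1}\le e\cdot t^{(1-\epsilon)s}$ follows immediately. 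Replacing your middle step by this two-range argument fixes the proof.
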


\begin{proof}
Let us prove \eqref{eq:pezzofinlevydens}.
Since $\Gamma(s+1) = e^{s (\log s - 1) + \log(\sqrt{2\pi}s)}(1+o(1))$
as $s\to\infty$, by Stirling's formula,
and since $\gamma \simeq 0.577 < 1$, it follows by \eqref{eq:scalingf} that
there is $c_1 > 0$ such that
\begin{equation} \label{eq:bdb}
	f_s(t) \ge c_1 \, \frac{s}{t} \, t^{s} \, e^{-s \, \log^+(s)} \,,
	\qquad \forall t\in (0,1] \,, \ \forall s \in (0,\infty) \,.
\end{equation}
Then, if we choose $\epsilon = 1-c$ in
\eqref{eq:pezzofinlevy}, we see that \eqref{eq:pezzofinlevydens} follows (with
$C' = C / (c c_1)$).

\smallskip

In order to prove \eqref{eq:pezzofinlevy},
let us derive some estimates.
We denote by $c_1, c_2, \ldots$ generic absolute constants in $(0,\infty)$.
By \eqref{eq:XN}-\eqref{eq:RN}, 
\begin{equation} \label{eq:finot0}
	\P\big(T_1^{(N)}\leq r\,\big) = \frac{R_r}{R_N}
	\le c_1 \, \frac{\log r}{\log N} \,, \qquad \forall r,N\in\N \,.
\end{equation}
At the same time
\begin{equation} \label{eq:finot}
	\P\big(T_1^{(N)}\leq r\,\big) = \frac{R_r}{R_N}
	= 1 - \frac{R_N - R_r}{R_N} \le e^{-\frac{R_N - R_r}{R_N}} \,.
\end{equation}
By \eqref{eq:rn}, we can fix $\eta > 0$ small enough so that
$\frac{R_N - R_r}{R_N} \ge \eta \, \frac{\log (N/r)}{\log N}$
\emph{for all $r , N \in \N$ with $r\le N$}. Plugging this into \eqref{eq:finot}, we obtain
a bound that will be useful later:
\begin{equation} \label{eq:ulat}
	\P\big(T_1^{(N)}\leq r\,\big) \le 
	\bigg( \frac{r}{N} \bigg)^{\frac{\eta}{\log N}} \,,
	\qquad \forall N \in \N, \ \forall r=1,\ldots, N \,.
\end{equation}

We can sharpen this bound. For every $\epsilon > 0$,
let us show that there is $N_\epsilon < \infty$ such that
\begin{equation}\label{eq:itrem}
	\P\big(T_1^{(N)} \le r\big) \le
	\bigg(\frac{r}{N}\bigg)^{\frac{1-\epsilon}{\log N}} \,,
	\qquad \forall N \ge N_\epsilon \,, \ \forall r = 1,2,\ldots, N \,.
\end{equation}
We first consider the range $r \le N^\theta$, where $\theta := e^{-1} / c_1$. Then, by \eqref{eq:finot0},
\begin{equation*}
	\P\big(T_1^{(N)}\leq r\,\big) \le 
	\P\big(T_1^{(N)}\leq N^\theta \,\big) \le c_1 \, \theta = e^{-1}
	= \big(\tfrac{1}{N}\big)^{\frac{1}{\log N}}
	\le \big(\tfrac{r}{N}\big)^{\frac{1}{\log N}}
	\le \big(\tfrac{r}{N}\big)^{\frac{1-\epsilon}{\log N}} \,.
\end{equation*}
Next we take $r \ge N^\theta$. Then $\frac{R_N - R_r}{R_N} \ge (1-\epsilon) \, \frac{\log (N/r)}{\log N}$ for $N$ large enough, by \eqref{eq:rn},
which plugged into \eqref{eq:finot} completes the proof of \eqref{eq:itrem}.
We point out that the bounds \eqref{eq:ulat}, \eqref{eq:itrem}
are poor for small $r$, but they provide a simple and
unified expression, valid for all $r=1,\ldots, N$.

\smallskip

We can finally show that \eqref{eq:pezzofinlevy} follows by
\eqref{eq:pezzofin0} (from Proposition~\ref{prop:sharp})
where we plug $k = s \log N$ and $n = tN$,
for $s \in (0,\infty) \cap \frac{1}{\log N}\N_0$ and $t \in (0,1] \cap \frac{1}{N}\N$.
Indeed, note that:
\begin{itemize}
\item by \eqref{eq:XN}-\eqref{eq:RN} we have
$k \, \P\big(T_1^{(N)} = n\big) \le c_2 \frac{k}{(\log N) n}
= c_2 \, \frac{1}{N} \, \frac{s}{t}$;

\item since $\frac{k}{\log n + 1} \ge \frac{k}{\log N + 1} \ge c_3 \, s$ for $n \le N$,
the last term in \eqref{eq:pezzofin0} matches with
the corresponding term in \eqref{eq:pezzofinlevy};

\item by \eqref{eq:itrem}
we have $\P\big(T_1^{(N)} \le n\big)^{k-1} 
\le t^{(1-\epsilon) s} \,
t^{-\frac{1}{\log N}} \le t^{(1-\epsilon) s} \,
(\frac{1}{N})^{-\frac{1}{\log N}} = e \, t^{(1-\epsilon) s}$,
because $t \ge \frac{1}{N}$,
hence \eqref{eq:pezzofinlevy} is deduced.\qedhere
\end{itemize}
\end{proof}

\smallskip

Before starting with the proof of Proposition~\ref{prop:sharp},
we derive some large deviation estimates.
We start by giving an upper bound on
the upper tail $\P(\tau^{(m)}_k \ge n)$ 
for arbitrary $m,k,n \in \N$. This is a Fuk-Nagaev type inequality,
see \cite[Theorem 1.1]{cf:Nag}.

\begin{lemma}\label{markov}
There exists a constant $C \in (1,\infty)$ such that for all
$m\in\N$ and $s,t \in [0,\infty)$
\begin{equation}\label{eq:alistable}
	\P\big( \tau^{(m)}_{\lfloor s (\log m + 1) \rfloor} \ge t m \big)
	\,\le\, e^{-t \, \log^+ (\frac{t}{Cs})} \,.
\end{equation}

\end{lemma}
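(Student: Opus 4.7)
The plan is to apply a Chernoff (exponential Markov) bound with an optimally chosen parameter. Writing $k := \lfloor s(\log m + 1)\rfloor$, for any $\lambda \geq 0$ we have
\begin{equation*}
\P\big(\tau^{(m)}_k \geq tm\big) \leq e^{-\lambda t m}\,\big(\E[e^{\lambda T^{(m)}_1}]\big)^k.
\end{equation*}
Since the increments are bounded by $m$, the natural scale is $\lambda = a/m$ for $a \geq 0$, and the only real work is to control the moment generating function on this scale.

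The key step is the MGF estimate. Using \eqref{eq:XN} together with $r(n)\sim \sfa/n$ and $R_m = \sfa\log m\,(1+o(1))$ from \eqref{eq:rn}-\eqref{eq:RN}, there is a constant $C_1 \in (0,\infty)$, uniform in $m \in \N$ (the $+1$ in $\log m + 1$ will absorb small values of $m$), such that
\begin{equation*}
\E[e^{(a/m) T^{(m)}_1}] - 1 \,=\, \sum_{n=1}^m (e^{an/m}-1)\,\frac{r(n)}{R_m} \,\leq\, \frac{C_1}{\log m + 1}\sum_{n=1}^m \frac{e^{an/m}-1}{n}.
\end{equation*}
Expanding $e^{an/m}-1=\sum_{j\geq 1}(an/m)^j/j!$ and using the trivial bound $\sum_{n=1}^m n^{j-1} \leq m^j$ yields
\begin{equation*}
\sum_{n=1}^m \frac{e^{an/m}-1}{n} \,\leq\, \sum_{j=1}^\infty \frac{a^j}{j!} \,=\, e^a-1.
\end{equation*}
Combining this with $1+x\leq e^x$ and $k\leq s(\log m+1)$, the Chernoff bound becomes
\begin{equation*}
\P\big(\tau^{(m)}_k\geq tm\big) \,\leq\, \exp\!\Big(-at + C_1(e^a-1)\,s\Big), \qquad \forall\, a\geq 0.
\end{equation*}

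The last step is to optimize in $a$. For $t\geq C_1 s$, the minimizer is $a=\log(t/(C_1 s))$, giving exponent $-t\log(t/(C_1 s)) + t - C_1 s \leq -t\log(t/(eC_1 s))$. Setting $C:=eC_1$ (and enlarging if necessary so that $C>1$), we obtain the bound $e^{-t\log^+(t/(Cs))}$ whenever $t\geq Cs$. In the complementary range $t<Cs$ one has $\log^+(t/(Cs))=0$, so the asserted estimate reduces to $\P(\cdot)\leq 1$, which is trivial. There is no real obstacle here; the main technical care is simply in extracting a uniform-in-$m$ constant $C_1$ (using the $+1$ in $\log m + 1$ to handle small $m$ and the case $k=0$) and in tracking the factor $e$ arising from the optimization so that $\log$ can be cleanly replaced by $\log^+$.
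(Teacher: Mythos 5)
Your proof is correct and follows essentially the same approach as the paper's: a Chernoff bound at the natural scale $\lambda=a/m$, a uniform moment-generating-function estimate of the form $1+O\big(e^a/(\log m+1)\big)$, and an explicit optimization over $a$. The paper obtains its MGF bound from the cruder inequality $\E[(T_1^{(m)})^j]\le m^{j-1}\E[T_1^{(m)}]$ instead of your termwise expansion and summation in $n$, but this is a cosmetic variation of the same argument.
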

\begin{proof}
We are going to prove that for all $m,n, k\in \N$
\begin{align} \label{eq:ali}
	\P\big(\tau^{(m)}_k \geq n\big) 
	\,\leq \, \bigg(\frac{C \,k\,m}{n \, (\log m + 1)}
	\wedge 1 \bigg)^{\frac{n}{m}} \,,
\end{align}
which is just a rewriting of \eqref{eq:alistable}.
For some $c_1 <\infty$ we have
$\E[\tau^{(m)}_1] \le c_1 \, \frac{m}{\log m + 1}$,
see \eqref{eq:rn}-\eqref{eq:XN}.
Since $\tau^{(m)}_1 \le m$, we can estimate
\begin{equation*}
\begin{split}
	\E\big[e^{\gl \tau_1^{(m)}}\big] 
	& = 1 + \sum_{j\ge 1} \frac{\lambda^j}{j!} \, \E[(\tau_1^{(m)})^j]	
	\le 1 + \sum_{j\ge 1} \frac{\lambda^j}{j!} \, m^{j-1} \E[\tau_1^{(m)}]	
	\le 1 + \frac{c_1}{\log m + 1} \sum_{j\ge 1} \frac{(\lambda m)^j}{j!} \\
	& \le 1 + \frac{c_1}{\log m + 1} \, e^{\lambda m} \,.
\end{split}
\end{equation*}
This yields, by Markov inequality, for all $\lambda \ge 0$,
\begin{align}
	\P\big(\tau^{(m)}_k \geq n\big)
	&\leq e^{-\gl n} \, \E\big[e^{\gl \tau_1^{(m)}}\big]^k
	= e^{- \gl n} \,\big( 1+\tfrac{c_1}{\log m + 1} e^{\gl m}  \big)^k  \notag \\
	\label{Markov}
	& \leq  e^{- \gl n}  \exp\big( \tfrac{c_1\,k}{\log m + 1} e^{\gl m} \big).
\end{align}
We now choose $\lambda$ such that
\begin{align*}
	\tfrac{k}{\log m + 1} \, e^{\gl m} = \tfrac{n}{m} \,, \qquad \text{that is} \qquad
	e^{-\lambda} = \big( \tfrac{m \, k}{n \, (\log m + 1)} \big)^{\frac{1}{m}} \,.
\end{align*}
If $\frac{m \, k}{n \, (\log m + 1)} > 1$ relation \eqref{eq:ali} holds trivially,
so we assume $\frac{m \, k}{n \, (\log m + 1)} \le 1$, so that $\lambda \ge 0$.
This choice of $\lambda$, when plugged into \eqref{Markov}, 
gives \eqref{eq:ali} with $C = e^{c_1+1}$.
\end{proof}

\begin{remark}
Heuristically, the upper bound \eqref{eq:ali} corresponds to requiring that
among the $k$ increments $T^{(m)}_1, T^{(m)}_2, \ldots, T^{(m)}_k$
there are $\ell := \frac{n}{m}$ ``big jumps'' of size comparable to $m$.
To be more precise, let us first recall the standard Cramer large deviations bound
\begin{equation*}
	\P(\mathrm{Pois}(\lambda) > t) \le 
	e^{- t (\log \frac{t}{\lambda} - 1)} = \big(\tfrac{e \lambda}{t}\big)^t \,, \qquad
	\forall \lambda, t > 0 \,.
\end{equation*}
Now fix $a \in (0,1)$ and note that
$\P(T^{(m)}_1 > a m) \sim p_m := \frac{c}{\log m}$ (where $c = \log \frac{1}{a}$).
If we denote by $N_{k,a m}$ the number of increments
$T_i^{(m)}$ of size at least $a m$, we can write
\begin{equation*}
	\P(N_{k,m} \ge \ell) =
	\P(\mathrm{Bin}(k,p_m) \ge \ell) \approx \P( \mathrm{Pois}(k \, p_m) \ge \ell) 
	\le \Big( \frac{e \, k \, p_m}{\ell}\Big)^\ell \,.
\end{equation*}
If we choose $\ell = \frac{n}{m}$, we obtain the same bound as in \eqref{eq:ali}.
This indicates that the strategy just outlined captures the essential
contribution to the event $\{ \tau^{(m)}_k \geq n \}$.
\end{remark}

We complement Lemma~\ref{markov} with a bound on the 
lower tail $\P(\tau^{(m)}_k \le n)$.

\begin{lemma}\label{markov2}
There exists a constant $c \in (0, 1)$ such that
for all $m\in\N$ and $s,t \in [0,\infty)$
\begin{equation} \label{eq:largeMstable}
	\P\big(\tau^{(m)}_{\lfloor s (\log m + 1) \rfloor} \le t m\big)
	\,\le\, e^{-c\, s \, \log^+(\frac{cs}{t})} \,.
\end{equation}
\end{lemma}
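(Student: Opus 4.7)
The plan is to mirror the exponential Chebyshev argument of Lemma~\ref{markov}, but applied to the \emph{negative} moment-generating function of $T_1^{(m)}$. The starting point is the Chernoff bound
$$\P\bigl(\tau^{(m)}_k \le n\bigr) \,\le\, e^{\lambda n}\,\E\bigl[e^{-\lambda T^{(m)}_1}\bigr]^k, \qquad \lambda\ge 0,$$
which I will optimize over $\lambda$ after establishing an upper bound on the one-step Laplace transform.

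The crux of the proof is the uniform estimate
$$\E\bigl[e^{-\lambda T^{(m)}_1}\bigr] \,\le\, 1 \,-\, c_1\,\frac{\log^+(\lambda m)}{\log m + 1}, \qquad \lambda \in (0,1],\ m \in \N,$$
for an absolute constant $c_1>0$. To derive it, I use $1-e^{-\lambda j}\ge 1-e^{-1}$ whenever $j\ge 1/\lambda$, so that
$$1 - \E\bigl[e^{-\lambda T^{(m)}_1}\bigr] \,=\, \sum_{j=1}^m \frac{r(j)}{R_m}\bigl(1-e^{-\lambda j}\bigr) \,\ge\, (1-e^{-1})\,\frac{R_m - R_{\lceil 1/\lambda\rceil}}{R_m},$$
and then invoke the asymptotics \eqref{eq:rn}--\eqref{eq:RN} to get $R_m - R_{\lceil 1/\lambda\rceil} \ge c_2\,\log^+(\lambda m)$ together with $R_m \le C(\log m + 1)$, both uniformly in $m$ and $\lambda$.

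Feeding this into the Chernoff bound via $\log(1-x)\le -x$, and substituting $k=\lfloor s(\log m + 1)\rfloor$, $n=tm$, I obtain
$$\log \P\bigl(\tau^{(m)}_k \le tm\bigr) \,\le\, \lambda\, t m \,-\, c_1\, s\,\log^+(\lambda m) \,+\, O(1),$$
where the $O(1)$ comes from the floor. I then pick $\lambda = \min\!\bigl(c_1 s/(tm),\,1\bigr)$. In the main range $c_1 s \le tm$, this gives $\lambda tm = c_1 s$ and $\lambda m = c_1 s/t$, so the right-hand side becomes
$$c_1 s\bigl[1 - \log^+(c_1 s/t)\bigr] + O(1) \,=\, -\,c_1 s\,\log^+\!\bigl(\tfrac{c_1}{e}\cdot\tfrac{s}{t}\bigr) + O(1),$$
which matches the target bound on setting $c := c_1/e$ (possibly shrunk to absorb the additive $O(1)$). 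In the complementary range $c_1 s > tm$ I use $\lambda = 1$ together with $\E[e^{-T^{(m)}_1}]\le 1-c_1$ to get $\log \P \le tm - c_1 s(\log m + 1)$; the subrange $cs/t \le m^e$ is covered by this estimate since $\log^+(cs/t) \le e\log m$ there, while for $cs/t > m^e$ one has $k > tm$ and hence $\P = 0$ outright.

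The main technical hurdle is securing the MGF estimate with a \emph{uniform} constant $c_1$ valid for every $m\in\N$ and every $\lambda\in(0,1]$, including the delicate threshold $\lambda m\approx 1$ and small values of $m$. This is precisely why the bulky quantity $\log^+(\lambda m)/(\log m+1)$ appears in place of the cleaner asymptotic $\log(1/\lambda)/\log m$: the plus-function and the $+1$ in the denominator ensure the bound remains meaningful (and the constant $c_1$ uniform) across all regimes. Once this estimate is established, the remaining Chernoff optimization and the case analysis above are routine.
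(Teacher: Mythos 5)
Your proposal follows the same route as the paper: Chernoff with $e^{-\lambda T^{(m)}_1}$, a lower bound on $1-\E[e^{-\lambda T^{(m)}_1}]$ of order $\log^+(\lambda m)/(\log m + 1)$, and then optimization of $\lambda \approx s/(tm)$. The two proofs differ only in how the one-step Laplace estimate is secured. The paper writes $\sum_{n=1}^m \frac{1-e^{-\lambda n}}{n}$, compares it from below to $\int_\lambda^{\lambda(m+1)}\frac{1-e^{-x}}{x}\,dx$ using monotonicity of $\frac{1-e^{-x}}{x}$, and then restricts to $x\in[1,\lambda m]$. This handles the threshold $\lambda m \approx 1$ smoothly: the sub-interval $[1,\lambda m]$ simply shrinks to a point. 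Your direct truncation of the sum to $j\ge 1/\lambda$ is morally the same, but the stated inequality $R_m - R_{\lceil 1/\lambda\rceil}\ge c_2\log^+(\lambda m)$ is literally false near the threshold: for $\lambda\in(1/m,1/(m-1))$ one has $\lceil 1/\lambda\rceil = m$, so $R_m - R_{\lceil 1/\lambda\rceil}=0$ while $\log^+(\lambda m)>0$. (What you actually get from the truncation is $R_m - R_{\lceil 1/\lambda\rceil - 1}$, and with that sharper expression the claim can be salvaged, but it needs care rather than a bare appeal to the asymptotics of $R_N$.) A second, smaller point: the paper dispatches the complementary regime by noting $\tau_k^{(m)}\ge k$, so $\P(\tau_k^{(m)}\le n)=0$ whenever $k>n$; your case split with $\lambda=1$ together with the subrange $cs/t>m^e$ gets to the same place but involves more bookkeeping, and the claim ``$cs/t>m^e$ implies $\P=0$ outright'' needs justification since it requires $k>n$ rather than merely $k\ge n$.

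Finally, the statement as written (and the paper's ``rewriting'' of \eqref{eq:largeM} into \eqref{eq:largeMstable}) is genuinely vacuous only for $k=\lfloor s(\log m+1)\rfloor\ge 1$; you will want to note, as the paper implicitly does, that the integer version \eqref{eq:largeM} is what is actually proved and used, with the continuous version holding up to adjustment of the constant $c$ to absorb floors.
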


\begin{proof}
We are going to prove that there exists $c \in (0,1)$ such that
for all $m,n,k\in\N$
\begin{equation}\label{eq:largeM}
	\P\big(\tau^{(m)}_k \le n\big) \le 
	\bigg(\frac{n \, (\log m + 1)}{c \, k\,m}
	\wedge 1 \bigg)^{\frac{c \, k}{\log m + 1}} \,,
\end{equation}
which is just a rewriting of \eqref{eq:largeMstable}.
For $\lambda \ge 0$ we have
\begin{equation} \label{eq:fina}
	\P(\tau^{(m)}_k \le n) = \P(e^{-\lambda \tau^{(m)}_k} \ge e^{-\lambda n})
	\le e^{\lambda n} \, \E[e^{-\lambda T_1^{(m)}}]^k \,.
\end{equation}

Next we evaluate, by \eqref{eq:rn}-\eqref{eq:RN},
\begin{equation*}
\begin{split}
	\E[e^{-\lambda T_1^{(m)}}] = \sum_{n=1}^m e^{-\lambda n} \, \frac{r(n)}{R_m}
	=1- \sum_{n=1}^m (1-e^{-\lambda n}) \, \frac{r(n)}{R_m}
	\le 1 - \frac{c_1}{\log m + 1} \sum_{n=1}^m \frac{1-e^{-\lambda n}}{n} \,,
\end{split}
\end{equation*}
for some $c_1 \in (0,1)$.
Since the function $x \mapsto \frac{1-e^{-x}}{x}$ is decreasing for $x \ge 0$, we can bound
\begin{equation*}
\begin{split}
	\E[e^{-\lambda T_1^{(m)}}] \le 
	1 - \frac{c_1}{\log m + 1} \int_{1}^{m+1} 
	\frac{1-e^{-\lambda t}}{t} \, \dd t
	= 1 - \frac{c_1}{\log m + 1} \int_{\lambda}^{\lambda(m+1)} 
	\frac{1-e^{-x}}{x} \, \dd x \,.
\end{split}
\end{equation*}
We are going to fix $\frac{1}{m} \le \lambda \le 1$. Restricting the integration
to the interval $1 \le x \le \lambda m$ and bounding $1-e^{-x} \ge (1-e^{-1})$
we obtain, for $c_2 := (1-e^{-1}) c_1$,
\begin{equation*}
	\E[e^{-\lambda T_1^{(m)}}] \le 
	1 - \tfrac{c_2}{\log m + 1} \, \log(\lambda m)
	\le e^{- \frac{c_2}{\log m + 1} \, \log(\lambda m) }
	= \Big( \tfrac{1}{\lambda m} \Big)^{\frac{c_2}{\log m + 1}} \,.
\end{equation*}
Looking back at \eqref{eq:fina}, we obtain
\begin{equation} \label{eq:fina2}
	\P(\tau_k^{(m)} \le n) \le e^{\lambda n} \, 
	\Big( \tfrac{1}{\lambda m} \Big)^{c_2 \, \frac{k}{\log m + 1}} \,.
\end{equation}

We are ready to prove \eqref{eq:largeM}. Assume first that $k \le n$ and let
$\lambda := \frac{k}{n \, (\log m + 1) } \le 1$.
We may assume that $\lambda \ge \frac{1}{m}$,
because for $\lambda m < 1$
the right hand side of \eqref{eq:largeM} equals $1$ and there is nothing to prove.
We then have $\frac{1}{m} \le \lambda \le 1$.
Plugging $\lambda$ into \eqref{eq:fina2}
gives
\begin{equation*}
	\P(\tau_k^{(m)} \le n) \le \bigg( 
	\frac{e^{\frac{1}{c_2}} \,n\, (\log m + 1)}{k m}
	\wedge 1 \bigg)^{c_2 \, \frac{k}{\log m + 1}} \, ,
\end{equation*}
where we inserted ``$\wedge 1$'' because the left hand side is a probability.
Since $x \ge e^{-1/x}$ for $x \ge 0$, 
in the exponent we can replace $c_2$ by $c := e^{-1/c_2}$, which yields
\eqref{eq:largeM}.

Finally, for $k > n$ the left hand side of \eqref{eq:largeM} vanishes,
because $\tau_k^{(m)} \ge k$.
\end{proof}

\begin{remark}
For renewal processes with a density, see Remark~\ref{rem:density},
the proof of Lemma~\ref{markov2} can be easily adapted, replacing sums
by integrals. The only difference is that
we no longer have $\tau_k^{(m)} \ge k$, so the case $k > n$
needs a separate treatment. To this purpose, we note that
\begin{equation*}
\begin{split}
	\E[e^{-\lambda T_1^{(m)}}] = \int_0^m e^{-\lambda t} \, \frac{r(t)}{R_m} \,
	\dd t
	& \le \frac{c_0}{\log m + 1}
	\int_0^\infty e^{-\lambda t} \, \dd t
	= \frac{c_0}{\log m + 1} \, \frac{1}{\lambda}\,,
\end{split}
\end{equation*}
for some $c_0 \in (1,\infty)$.
If we set $\lambda = \frac{k}{n}$, by \eqref{eq:fina}  we get
\begin{equation} \label{eq:debou}
	\P(\tau^{(m)}_k \le n)
	\le \bigg(\frac{n}{k} \bigg)^k \,
	\bigg( \frac{e \, c_0}{\log m + 1}\bigg)^{k}  \,.
\end{equation}
We now give a lower bound on the right hand side of \eqref{eq:largeM}.
We assume that the fraction therein is $\le 1$, otherwise there is nothing
to prove. Since $c \in (0,1)$, for $k > n$ we can bound 
\begin{equation*}
	\bigg(\frac{n}{k}
	\bigg)^{\frac{c \, k}{\log m + 1}} \,
	\bigg(\frac{\log m + 1}{c \, m}
	\bigg)^{\frac{c \, k}{\log m + 1}}
	\ge \bigg(\frac{n}{k}
	\bigg)^{k} \,
	\bigg(\frac{1}{m + 1}
	\bigg)^{\frac{c \, k}{\log m + 1}}
	=
	\bigg(\frac{n}{k}
	\bigg)^{k} \,
	e^{- c \, k}
	\ge \bigg(\frac{n}{k}
	\bigg)^{k} \,
	e^{- k} \, .
\end{equation*}
This is larger than the right hand side of \eqref{eq:debou},
if we take $m \ge m_0 := \lfloor \exp(e^2 \, c_0) \rfloor$
(so that $\frac{e \, c_0}{\log m + 1} \le e^{-1}$). This shows
that \eqref{eq:debou} holds for $k > n$ and $m \ge m_0$.

It remains to consider the case $k > n$ and $m < m_0$.
Note that lowering $c$ increases the right hand side
of \eqref{eq:largeM}, so we can assume that
$c \le \frac{\log m_0 + 1}{e \, c_0 \, m_0}$.
Since $m \mapsto \frac{\log m + 1}{m}$ is decreasing for $m \ge 1$, we can
bound the right hand side of \eqref{eq:largeM} from below
(assuming that the fraction therein is $\le 1$) as follows,
for $k > n$ and $m < m_0$t:
\begin{equation*}
	\bigg(\frac{n}{k} \,
	\frac{\log m_0 + 1}{c \, m_0}
	\bigg)^{\frac{c \, k}{\log m + 1}}
	\ge	\bigg(\frac{n}{k}
	\, e \, c_0 \bigg)^{\frac{c \, k}{\log m + 1}}
	\ge	\bigg(\frac{n}{k}
	\, \frac{e \, c_0}{\log m + 1} \bigg)^{\frac{c \, k}{\log m + 1}} \,,
\end{equation*}
which is larger than the right hand side of \eqref{eq:debou}.
This completes the proof of \eqref{eq:largeM} for renewal
processes with a density, as in Remark~\ref{rem:density}.
\end{remark}

\smallskip

\begin{proof}[Proof of Proposition~\ref{prop:sharp}]
We have to prove relation \eqref{eq:pezzofin0} for all
$N,k,n\in\N$ with $n \le N$.

Let us set
\begin{equation*}
	M^{(N)}_k := \max_{1 \le i \le k} T^{(N)}_i \,,
\end{equation*}
and note that $\{\tau_k^{(N)} = n\} \subseteq \{M^{(N)}_k \le n\}$. This yields
\begin{equation} \label{eq:inview}
	\frac{\P\big( \tau_k^{(N)} = n \big)}{\P\big( T_1^{(N)} \le n \big)^k} 
	= \P\big( \tau_k^{(N)} = n \,\big|\, M^{(N)}_k \le n \big)
	= \P\big( \tau_k^{(n)} = n \big) \,,
\end{equation}
where the last equality holds because the random variables $T^{(N)}_i$, conditioned
on $\{T^{(N)}_i \le n\}$, have the same law as $T^{(n)}_i$,
see \eqref{eq:XN}. Let us now divide both sides of \eqref{eq:pezzofin0}
by $\P\big( T_1^{(N)} \le n \big)^k$. The equality
\eqref{eq:inview} and the observation
that $\P(T_1^{(N)} = n)/\P(T_1^{(N)} \le n) = \P(T_1^{(n)}=n)$
show that \eqref{eq:pezzofin0} is implied by
\begin{equation}\label{eq:pezzofin}
\begin{split}
	\P\big(\tau_k^{(n)}=n\big) 
	&\ \le \ C \, k  \, \frac{1}{n \, (\log n + 1)} \,
	\, e^{-\frac{c \, k}{\log n + 1} \, \log^+\frac{c\, k}{\log n + 1}}  \,.
\end{split}
\end{equation}
Note that there is no longer dependence on $N$.

\smallskip

It remains to prove \eqref{eq:pezzofin}.
By Lemma~\ref{markov2}, more precisely by \eqref{eq:largeM}, we can bound
\begin{equation*}
	\P\big(\tau_k^{(n)}=n\big)  \le \P\big(\tau_k^{(n)} \le n\big) 
	\le \bigg(\frac{\log n + 1}{c \, k}
	\wedge 1 \bigg)^{\frac{c \, k}{\log n + 1}}
	= e^{-\frac{c \, k}{\log n + 1} \, \log^+ \frac{c \, k}{\log n + 1}} \,.
\end{equation*}
This shows that \eqref{eq:pezzofin} holds for every $k\in\N$ 
if we take $C = C(n) := n \, (\log n + 1)$.
Then, for any fixed $\bar n \in \N$, we can set
$C := \max_{n \le \bar n} C(n)$ and relation
\eqref{eq:pezzofin} holds for all $n \le \bar n$ and $k\in\N$.
As a consequence, it remains to prove that
there is another constant $C < \infty$ such that
relation \eqref{eq:pezzofin} holds for all $n \ge \bar n$ and $k\in\N$.
Note that $\bar n \in \N$ is arbitrary.

We start by estimating, for any $m \in (1, n]$
(possibly not an integer, for later convenience)
\begin{equation}\label{eq:step1}
\begin{split}
	&\P\big(\tau_k^{(n)}=n\,,\,M_k^{(n)}\in (e^{-1}m, m] \,\big)\\
	&\le k \sum_{r\in (e^{-1}m,m]} \P(T_1^{(n)}=r) \,
	\P\big(\tau^{(n)}_{k-1} = n-r\,,\, M^{(n)}_{k-1} \le r\,\big) \\
	&\leq k 
	\, \max_{r\in (e^{-1}m,m]} \P(T_1^{(n)}=r) \
	\P\big(T_1^{(n)}\leq m\,\big)^{k-1} \!\!\!
	\sum_{r\in (e^{-1}m,m]} \!\!\!\!\!
	\P\big(\tau^{(n)}_{k-1} = n-r\,\big|\, M^{(n)}_{k-1} \le m \big) \,.
\end{split}
\end{equation}
Since $T^{(n)}_i$ conditioned on $T^{(n)}_i\leq m$ is distributed
as $T_i^{(m)} := T_i^{(\lfloor m \rfloor)}$, we get, by \eqref{eq:XN}-\eqref{eq:RN},
\begin{equation} \label{eq:appli}
\begin{split}
	& \P\big(\tau_k^{(n)}=n\,,\,M_k^{(n)}\in (e^{-1}m, m] \,\big) \\
	& \qquad\le \, c_4 \, k \, \frac{1}{m \, (\log n + 1)} \,
	\, \P\big(T_1^{(n)}\leq m\,\big)^{k-1} \,
	\, \P\big(n-m \le \tau^{(m)}_{k-1} < n-e^{-1} m \big) \,.
\end{split}
\end{equation}
We bound $\P(T_1^{(n)} \le m)^{k-1} 
\le (\frac{m}{n})^{\frac{\eta(k-1)}{\log n}} \le e \, (\frac{m}{n})^{\frac{\eta k}{\log n}}$, by \eqref{eq:ulat}.
Choosing $m = e^{-\ell} n$ in \eqref{eq:appli} and summing over
$0 \le \ell \le \log n$, we obtain the key bound
\begin{equation} \label{eq:dechi}
\begin{split}
	& \P\big(\tau_k^{(n)} =n \,\big) 
	\, = \, \sum_{\ell = 0}^{\lfloor \log n \rfloor}
	\P\big(\tau_k^{(n)}=n\,,\,M_k^{(n)}\in (e^{-\ell-1} n, e^{-\ell} n] \,\big)  \\
	& \ \le \, c_4 \, k \, \frac{1}{n \, (\log n + 1)} \,
	\sum_{\ell = 0}^{\lfloor \log n \rfloor}
	\, e^\ell \, \P\big(T_1^{(n)}\leq e^{-\ell} n\,\big)^{k-1} \,
	\P\Big( (1-e^{-\ell})n \le
	\tau^{(e^{-\ell} n)}_{k-1} < (1-e^{-(\ell+1)})n \Big) \,.
\end{split}
\end{equation}
To complete the proof of \eqref{eq:pezzofin}, we show that, for
suitable $C \in (0,\infty)$ and $c\in (0,1)$,
\begin{equation}\label{eq:ourgoal}
\begin{split}
	& \sum_{\ell = 0}^{\lfloor \log n \rfloor} 
	e^\ell \, \P\big(T_1^{(n)}\leq e^{-\ell} n\,\big)^{k-1} \,
	\P\Big( (1-e^{-\ell})n \le
	\tau^{(e^{-\ell} n)}_{k-1} < n \Big)
	\le C \, 
	e^{-\frac{c \, k}{\log n + 1} \log^+ \frac{c\, k}{\log n + 1}} \,.
\end{split}
\end{equation}

\smallskip
\emph{Let $c \in (0,1)$ be the constant in Lemma~\ref{markov2}.}
We recall that we may fix $\bar n$ arbitrarily and 
focus on $n \ge \bar n$.
We fix $c' \in (0,1)$ with $c' > c$,
and we choose $\bar n$ so that, by \eqref{eq:itrem} with $N = n$ 
and $r = e^{-\ell} n$,
\begin{equation*}
	\P\big(T_1^{(n)}\leq e^{-\ell} n\,\big) \le
	(e^{-\ell})^{\frac{c'}{\log n}} \qquad \forall n \ge \bar n\,, \
	\forall \ell = 0,1,\ldots, \lfloor \log n\rfloor \,.
\end{equation*}
Then \eqref{eq:ourgoal} is reduced to showing that
for all $n \ge \bar n$ and $k =1,\ldots, n$
\begin{equation}\label{eq:ourgoall}
\begin{split}
	& \sum_{\ell = 0}^{\lfloor \log n \rfloor} 
	e^\ell \, (e^{-\ell})^{\frac{c' (k-1)}{\log n}} \,
	\P\Big( (1-e^{-\ell})n \le
	\tau^{(e^{-\ell} n)}_{k-1} < n \Big)
	\le C \, 
	e^{-\frac{c \, k}{\log n + 1} \log^+ \frac{c\, k}{\log n + 1}} \,.
\end{split}
\end{equation}

We first consider the regime of $k\in\N$ such that
\begin{equation}\label{eq:regime1}
	k > 1 + \tfrac{2}{c'-c} \, (\log n + 1) \,.
\end{equation}
We use Lemma~\ref{markov2}
to bound the probability in \eqref{eq:ourgoall}.
More precisely, we apply relation \eqref{eq:largeMstable} with $m = e^{-\ell} n$,
$s = \frac{k-1}{\log(e^{-\ell}n) + 1}$, $t = e^\ell$ and
with $\log^+$ replaced by $\log$, to get an upper bound.
Since $e^{-\ell} n \le n$, we get by monotonicity
\begin{equation} \label{eq:treline}
\begin{split}
	\P\big(\tau^{(e^{-\ell} n)}_{k-1} < n \big)
	& \,\le\, 
	e^{-\frac{c \, (k-1)}{\log (e^{-\ell}n) + 1} 
	\log \left( e^{-\ell}
	\, \frac{c \, (k-1)}{\log (e^{-\ell}n) + 1} \right)}
	 \,\le\,  
	e^{-\frac{c \, (k-1)}{\log n + 1} 
	\log \left( e^{-\ell}
	\, \frac{c \, (k-1)}{\log n + 1} \right)} \\
	& \,=\, 
	\Big\{ e^{-\frac{c \, (k-1)}{\log n + 1} 
	\log \frac{c \, (k-1)}{\log n + 1} } \Big\}
	\ \big( e^{\frac{c \, (k-1)}{\log n}} \big)^\ell \,.
\end{split}
\end{equation}
Since $k-1 \ge \frac{k}{2}$ for $k \ge 2$, if we redefine $c/2$ as $c$,
we see that the term in brackets in \eqref{eq:treline} matches with
the right hand side of \eqref{eq:ourgoall} (where we can replace
$\log^+$ by $\log$, by \eqref{eq:regime1} and $\frac{2}{c'-c} > c$). The other
term in \eqref{eq:treline}, when inserted in the left hand side of \eqref{eq:ourgoall}, 
gives a contribution to the sum which is uniformly bounded, by \eqref{eq:regime1}:
\begin{equation*}
	\sum_{\ell=0}^{\lfloor \log n \rfloor} e^\ell \, (e^{-\ell})^{\frac{c' (k-1)}{\log n}} \ 
	\big( e^{\frac{c \, (k-1)}{\log n}} \big)^\ell \,\le\,
	\sum_{\ell=0}^{\infty} \big(e^{1 - (c'-c)\frac{k}{\log n}} \big)^{\ell} 
	\,\le\, \sum_{\ell=0}^{\infty} e^{-\ell} \,<\, \infty \,.
\end{equation*}
This completes the proof of \eqref{eq:ourgoall}
under the assumption \eqref{eq:regime1}.

\smallskip

Next we consider the complementary regime of \eqref{eq:regime1}, that 
is
\begin{equation} \label{eq:regime2}
	k \le A \log n + B \,,
\end{equation}
for suitably fixed constants $A,B$.
In this case the right hand side
of \eqref{eq:ourgoall} is uniformly bounded from below by a 
positive constant. Therefore
it suffices to show that
\begin{equation}\label{eq:ourgoal2}
\begin{split}
	& \sum_{\ell = 1}^{\lfloor \log n \rfloor} 
	e^\ell \, \P\Big( \tfrac{n}{2} \le
	\tau^{(e^{-\ell} n)}_{k-1} < n \Big)
	\le C \, \,,
\end{split}
\end{equation}
where, in order to lighten
notation, we removed from \eqref{eq:ourgoal} the term $\ell=0$
(which contributes at most one)
and then bounded $(1-e^{-\ell})n \ge \tfrac{n}{2}$ for $\ell \ge 1$.

\smallskip

We apply Lemma~\ref{markov} (with the constant $C$ renamed
$D$, to avoid confusion with \eqref{eq:ourgoal2}).
Relation \eqref{eq:alistable} with $m = e^{-\ell} n$,
$s = \frac{k}{\log (e^{-\ell}n) + 1}$, $t = \frac{1}{2}e^\ell$ gives
\begin{equation} \label{eq:from2}
	\P\big(\tau^{(e^{-\ell}n)}_k \geq \tfrac{n}{2}\big) 
	\,\leq \, e^{- \frac{1}{2} e^\ell
	\, \log^+ \left( \frac{e^\ell}{2D} 
	\frac{\log n - \ell + 1}{k} \right)}
	\,=\, e^{-e^\ell \left\{ \frac{1}{2} \, \log^+ \left(
	\frac{1}{2D} \, \frac{1}{x_\ell} \right) \right\}} \,,
\end{equation}
where we have introduced the shorthand
\begin{equation} \label{eq:xell}
	x_\ell := \tfrac{k \, e^{-\ell}}{\log n - \ell + 1} \,.
\end{equation}
For $\ell$ such that $x_{\ell} < \frac{1}{2D e^2}$
the right hand side of \eqref{eq:from2} is at most $e^{-e^\ell}$.
We claim that
\begin{equation}\label{eq:ccll}
	x_{\ell} < \tfrac{1}{2D e^2} \qquad \text{for all 
	$\ell \ge \bar \ell$, where} \qquad \bar \ell := 
	\lfloor \log \big( 4(A+B) D e^2) \rfloor + 1 \,.
\end{equation}
This completes the proof of \eqref{eq:ourgoal2}, because the sum is at most
$\sum_{\ell = 1}^{\bar \ell} e^\ell
+ \sum_{\ell = \bar \ell + 1}^\infty e^\ell \, e^{-e^\ell} < \infty$.

It remains to prove that relation \eqref{eq:ccll} holds in regime
\eqref{eq:regime2}. We recall that we may assume that $n$ is large enough.
Consider first the range $\frac{1}{2} \log n \le \ell \le \lfloor \log n\rfloor$:
then
\begin{equation*}
	x_\ell \le k \, e^{-\ell} \le \tfrac{k}{\sqrt{n}} \le 
	\tfrac{A \log n + B}{\sqrt{n}}
	\,\xrightarrow[n\to\infty]{}\, 0 \,,
\end{equation*}
hence we have $x_{\ell} < \frac{1}{2D e^2}$ for $n$ large enough.
Consider finally the range $\ell < \frac{1}{2} \log n$: then
\begin{equation*}
	x_\ell \le \tfrac{k}{\frac{1}{2} \log n} \, e^{-\ell}
	\le \tfrac{A \log n + B}{\frac{1}{2} \log n} \, e^{-\ell}
	\le 2(A+B) \, e^{-\bar \ell} \le \tfrac{1}{2D e^2} \,,
\end{equation*}
by the definition \eqref{eq:ccll} of $\bar\ell$. This completes the proof.
\end{proof}

\medskip

We conclude this section by extending Proposition~\ref{LLTupper} to the multidimensional
setting. We recall that $(\tau_k^{(N)}, \, S_k^{(N)})$
is defined in \eqref{eq:tauS}.

\begin{proposition}\label{LLTupper2}
There are constants $C \in (0,\infty)$,
$c \in (0,1)$ and, for every $\epsilon > 0$, $N_\epsilon \in \N$
such that for all $N \ge N_\epsilon$, $s \in (0,\infty)
\cap \frac{1}{\log N}\N$, $t \in (0,1] \cap \frac{1}{N}\N$
and $x \in \frac{1}{\sqrt{N}} \Z^d$ we have
\begin{equation}\label{eq:pezzofinlevy2}
	\P\big(\tau^{(N)}_{s \log N} = t N \,, \
	S^{(N)}_{s \log N} = x \sqrt{N} \, \big) 
	\ \le \ C \, \frac{1}{N^{1+\frac{d}{2}}} \, 
	\frac{s}{t^{1+\frac{d}{2}}} \, t^{(1-\epsilon)s} \,
	e^{- c s \, \log^+ (cs)} \,.
\end{equation}
It follows that for $N \in \N$ large enough
\begin{equation}\label{eq:pezzofinlevydens2}
\begin{split}
	\P\big(\tau^{(N)}_{s \log N} = t N \,, \
	S^{(N)}_{s \log N} = x \sqrt{N} \, \big) 
	& \ \le \ C' \, \frac{1}{N} \, \frac{1}{(Nt)^{\frac{d}{2}}} \, f_{cs}(t) \,.
\end{split}
\end{equation}
\end{proposition}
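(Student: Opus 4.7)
The plan is to first establish a spatial sharpening of Proposition~\ref{prop:sharp}, namely that for all $N,k,n\in\N$ with $n\le N$ and all $x\in\Z^d$,
\begin{equation} \label{eq:spatialsharpplan}
\P\big(\tau_k^{(N)}=n,\, S_k^{(N)}=x\big)
\,\le\, C\,\frac{k\,\P(T_1^{(N)}=n)}{n^{d/2}}\,\P(T_1^{(N)}\le n)^{k-1}\,e^{-\frac{ck}{\log n + 1}\log^+\frac{ck}{\log n+1}}.
\end{equation}
Once \eqref{eq:spatialsharpplan} is available, \eqref{eq:pezzofinlevy2} follows by plugging $k=\lfloor s\log N\rfloor$ and $n=tN$, repeating verbatim the bookkeeping at the end of the proof of Proposition~\ref{LLTupper}: the scalar factors give $\frac{1}{N}\,\frac{s}{t}\,t^{(1-\epsilon)s}\,e^{-cs\log^+(cs)}$, while the new $1/n^{d/2}=(tN)^{-d/2}$ produces the extra spatial prefactor. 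The estimate \eqref{eq:pezzofinlevydens2} is then deduced from \eqref{eq:pezzofinlevy2} with $\epsilon=1-c$ via the lower bound \eqref{eq:bdb} on $f_{cs}(t)$, just as \eqref{eq:pezzofinlevydens} was derived from \eqref{eq:pezzofinlevy}.

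To prove \eqref{eq:spatialsharpplan}, I would rerun the one-big-jump dyadic decomposition of Section~\ref{sec:renewal} while dragging along the spatial constraint. At each dyadic scale $m = e^{-\ell}n$, the spatial analog of \eqref{eq:step1} reads
\begin{equation*}
\P\big(\tau_k^{(N)}=n,\, S_k^{(N)}=x,\, M_k^{(N)}\in(e^{-1}m,m]\big)
\,\le\, k\sum_{r\in(e^{-1}m,m]}\sum_{y\in\Z^d} \P(T_1^{(N)}=r)\,p(r,y)\,P_r(y),
\end{equation*}
where $P_r(y) := \P\big(\tau_{k-1}^{(N)}=n-r,\, S_{k-1}^{(N)}=x-y,\, M_{k-1}^{(N)}\le r\big)$. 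Using the uniform local CLT bound $p(r,y)\le C/r^{d/2}\le C'\,e^{\ell d/2}/n^{d/2}$ from \eqref{eq:lltnorm} and then summing over $y$, which removes the spatial constraint on the remaining $k-1$ steps, one obtains exactly the scalar expression of Section~\ref{sec:renewal} multiplied by $C'\,e^{\ell d/2}/n^{d/2}$. Inserting this into the dyadic sum \eqref{eq:dechi} simply replaces the factor $e^\ell$ there by $e^{\ell(1+d/2)}$, and produces the overall prefactor $1/n^{d/2}$ required by \eqref{eq:spatialsharpplan}.

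It then remains to verify the reinforced version of \eqref{eq:ourgoal} with $e^{\ell(1+d/2)}$ in place of $e^\ell$. The two-regime analysis carries over with only constants changing: in the large-$k$ regime \eqref{eq:regime1}, the estimate \eqref{eq:itrem} yields geometric decay $(e^{-\ell})^{c'(k-1)/\log n}$ which, for $c'$ chosen sufficiently close to $1$ and $k/\log n$ above a $d$-dependent threshold, beats $e^{\ell(1+d/2)}$; in the bounded-$k$ regime \eqref{eq:regime2}, the super-exponential bound $e^{-\mathrm{const}\cdot e^\ell}$ coming from \eqref{eq:from2} trivially absorbs $e^{\ell(1+d/2)}$ once $\bar\ell$ is enlarged accordingly. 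The main technical point, and the only place where care is needed, is precisely this absorption of the extra combinatorial factor $e^{\ell d/2}$ in both regimes; but since the underlying decay in $\ell$ is either strictly geometric with exponent tunable via $c'$, or super-exponential, both mechanisms leave ample room, and no new probabilistic input beyond Lemmas~\ref{markov} and~\ref{markov2} is required.
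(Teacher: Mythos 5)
Your proposal is correct and follows essentially the same path as the paper: you prove the $d$-dimensional sharpening of the one-big-jump estimate (the paper's \eqref{eq:pezzofin02}) by bounding the big jump's spatial increment via the uniform local CLT bound \eqref{eq:lltnorm} and summing out the remaining spatial constraint, which replaces $e^\ell$ by $e^{\ell(1+d/2)}$ in the dyadic sum and yields the $1/n^{d/2}$ prefactor, after which the two-regime analysis and the final bookkeeping with $k=s\log N$, $n=tN$ and $\epsilon=1-c$ are exactly as before. One small imprecision: in the large-$k$ regime it is not $c'$ that needs adjusting but the threshold in \eqref{eq:regime1}, which becomes $\frac{2+d/2}{c'-c}\log n$ instead of $\frac{2}{c'-c}\log n$, but this does not affect the conclusion.
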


\begin{proof}
We follow closely the proof of Proposition~\ref{LLTupper}.
Relation~\eqref{eq:pezzofinlevydens2} follows from \eqref{eq:pezzofinlevy2}
with $\epsilon = 1-c$,
thanks to the bound \eqref{eq:bdb}, so we focus on \eqref{eq:pezzofinlevy2}.

We will prove an analog
of relation \eqref{eq:pezzofin0}: for all $N,k,n \in \N$ with $n \le N$
and for all $z\in\Z^d$
\begin{equation}\label{eq:pezzofin02}
	\P\big(\tau_k^{(N)}=n \,, \ S_k^{(N)} = z\big) 
	\le \, C \, \frac{k}{n^{\frac{d}{2}}} \, \P\big(T_1^{(N)} = n\big) \,
	\, \P\big(T_1^{(N)} \le n\big)^{k-1}
	\, e^{-\frac{c \, k}{\log n + 1} \, \log^+\frac{c\, k}{\log n + 1}}  .
\end{equation}
Note that the only difference with respect to \eqref{eq:pezzofin0}
is the term $n^{\frac{d}{2}}$ in the denominator.

In the proof of Proposition~\ref{LLTupper} we showed that
\eqref{eq:pezzofinlevy} follows from \eqref{eq:pezzofin0}.
In exactly the same way, relation \eqref{eq:pezzofinlevy2} follows from \eqref{eq:pezzofin02},
by choosing $k = s \log N$, $n = Nt$, $z = x \sqrt{N}$.

\smallskip
It remains to prove \eqref{eq:pezzofin02}. Arguing as in
\eqref{eq:inview}, we remove the dependence on $N$
and it suffices to prove the following analog of
\eqref{eq:pezzofin}: for all $n,k \in \N$ 
and for all $z\in\Z^d$
\begin{equation}\label{eq:pezzofin2}
\begin{split}
	\P\big(\tau_k^{(n)}=n \,, \ S_k^{(n)} = z\big) 
	&\ \le \ C \, \frac{k}{n^{\frac{d}{2}}}  \, \frac{1}{n \, (\log n + 1) } \,
	\, e^{-\frac{c \, k}{\log n + 1} \, \log^+\frac{c\, k}{\log n + 1}}  \,.
\end{split}
\end{equation}
To this purpose, we claim that we can modify \eqref{eq:appli} as follows:
\begin{equation} \label{eq:appli2}
\begin{split}
	& \P\big(\tau_k^{(n)}=n\,,\ S_k^{(n)} = z\,,
	\ M_k^{(n)}\in (e^{-1}m, m] \,\big) \\
	& \qquad\le \, c_4 \, \frac{k}{m^{\frac{d}{2}}} \, \frac{1}{m \, (\log n + 1)} \,
	\, \P\big(T_1^{(n)}\leq m\,\big)^{k-1} \,
	\, \P\big(n-m \le \tau^{(m)}_{k-1} < n-e^{-1} m \big) \,.
\end{split}
\end{equation}
This is because, arguing as in  \eqref{eq:step1}, we can write
\begin{equation*}
\begin{split}
	&\P\big(\tau_k^{(n)}=n\,,\, S_k^{(n)}=x \,,\, M_k^{(n)}\in (e^{-1}m, m] \,\big)\\
	&\ \ \le k \sum_{r\in (e^{-1}m,m]\,,\, y \in \Z^d} \P(T_1^{(n)}=r \,, \,
	X_1^{(n)} = y) \,
	\P\big(\tau^{(n)}_{k-1} = n-r\,,\,
	S^{(n)}_{k-1} = x-y \,, \,  M^{(n)}_{k-1} \le r\,\big) \\
	&\ \ \leq k \,\,
	\Big\{ \max_{r\in (e^{-1}m,m]\,,\, y \in \Z^d} \P(T_1^{(n)}=r \,, \,
	X_1^{(n)} = y) \Big\} \,\, \P\big(T_1^{(n)}\leq m\,\big)^{k-1}  \, \\
	& \qquad\qquad\qquad\qquad\qquad\qquad
	\qquad\qquad \sum_{r\in (e^{-1}m,m]} 
	\P\big(\tau^{(n)}_{k-1} = n-r\,\big|\, M^{(n)}_{k-1} \le m \big) \,,
\end{split}
\end{equation*}
and it follows by \eqref{eq:bsXN}, \eqref{eq:lltnorm} and \eqref{eq:rn}-\eqref{eq:RN} that 
\begin{equation*}
	\max_{r\in (e^{-1}m,m]\,,\, y \in \Z^d} \P(T_1^{(n)}=r \,, \,
	X_1^{(n)} = y)
	\le \frac{C}{\log n + 1} \, \frac{1}{m^{1+\frac{d}{2}}} \,.
\end{equation*}

We can now plug $m = e^{-\ell} n$ into \eqref{eq:appli2} and sum over
$\ell=0,1,\ldots, \lfloor \log n\rfloor$, as in \eqref{eq:dechi}.
This leads to our goal \eqref{eq:pezzofin2}, provided we prove the
following analog of \eqref{eq:ourgoal}:
\begin{equation*}
\begin{split}
	& \sum_{\ell = 0}^{\lfloor \log n \rfloor} 
	e^{(1+\frac{d}{2})\ell} \, \P\big(T_1^{(n)}\leq e^{-\ell} n\,\big)^{k-1} \,
	\P\Big( (1-e^{-\ell})n \le
	\tau^{(e^{-\ell} n)}_{k-1} < n \Big)
	\le C \, 
	e^{-\frac{c \, k}{\log n + 1} \log^+ \frac{c\, k}{\log n + 1}} \,.
\end{split}
\end{equation*}
The only difference with respect to \eqref{eq:ourgoal} is the term
$e^{(1+\frac{d}{2})\ell}$ instead of $e^\ell$ in the sum.
It is straightforward to adapt the lines following \eqref{eq:ourgoal}
and complete the proof.
\end{proof}

\section{Proof of Theorem~\ref{th:WR}: case $T = 1$}
\label{sec:WR}

In this section we prove Theorem~\ref{th:WR} for
$T = 1$. The case $T > 1$ will be deduced in the next Section~\ref{sec:WR>}.
We prove separately the uniform upper bound \eqref{eq:unibo}
and the local limit theorem \eqref{eq:UNas},
assuming throughout the section that $n \le N$ (because $T = 1$).

\smallskip

For later use, we state an immediate corollary of Lemma~\ref{markov2}.

\begin{lemma}\label{large s}
There is a constant $c \in (0, 1)$ such that
for all $N \in \N$ and $s,t \in [0,\infty)$
\begin{equation}\label{eq:larges}
	\P\big(\tau^{(N)}_{\lfloor s \log N \rfloor} \le t N\big) \le 
	e^{s - c \, s \log \frac{s}{t}} \,.
\end{equation}
\end{lemma}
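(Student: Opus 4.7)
The plan is to deduce Lemma~\ref{large s} as a direct consequence of Lemma~\ref{markov2}, essentially by matching parameters and absorbing the resulting constants into the allowed $s$ and $c$ terms of the target exponent. Concretely, I would apply \eqref{eq:largeMstable} with $m = N$ and with a rescaled time parameter $\sigma := s \, \log N/(\log N + 1)$, so that $\lfloor \sigma(\log N + 1)\rfloor = \lfloor s \log N\rfloor$ exactly. This yields
\begin{equation*}
    \P\big(\tau^{(N)}_{\lfloor s \log N\rfloor} \le t N\big) \,\le\, \exp\!\big({-c_0 \, \sigma \, \log^+(c_0\sigma/t)}\big),
\end{equation*}
where $c_0 \in (0,1)$ is the constant from Lemma~\ref{markov2}. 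Note that $\sigma \in [s/2, s]$ for $N \ge 2$, while for $N = 1$ the left-hand side reduces to $\P(\tau_0 = 0 \le t) = 1$ and there is nothing to prove (using the convention $0\cdot\infty=0$, the right-hand side of \eqref{eq:larges} is $e^0=1$).

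Next I would compare the two exponents. If $s/t \le e^{1/c}$ for the target constant $c$ (to be chosen below), then $s - c s \log(s/t) \ge 0$, so the claimed upper bound is $\ge 1$ and is trivially valid. In the complementary regime $s/t > e^{1/c}$, provided $c$ is chosen small enough in terms of $c_0$ we also have $c_0 \sigma/t > 1$, so $\log^+$ becomes $\log$, and a direct expansion gives
\begin{equation*}
    -c_0 \, \sigma \, \log(c_0\sigma/t) \,=\, -c_0 \, \sigma \, \log(s/t) \,+\, c_0\, \sigma\,\big(\log\tfrac{\log N + 1}{\log N} - \log c_0\big).
\end{equation*}
Using $\sigma \le s$, the last term is bounded by $K s$ for a universal constant $K$, and it can be absorbed into the target $+s$ term at the cost of replacing $c_0$ by a slightly smaller constant $c \in (0,1)$ in front of $\log(s/t)$ (since $\sigma \ge s/2$ allows passing from $c_0 \sigma$ to $c \, s$ in the leading logarithmic term). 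This yields $\le s - c s \log(s/t)$ and completes the proof.

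I do not expect a real obstacle: the content of Lemma~\ref{large s} is weaker than Lemma~\ref{markov2}, and the only work is the bookkeeping above, mainly (i) matching $\lfloor s \log N\rfloor$ with a floor involving $\log N + 1$, and (ii) absorbing the additive constant produced by replacing $c_0\sigma$ with $s$ inside the logarithm. Both are handled cleanly by enlarging the additive term from $0$ to $s$ in the exponent and shrinking the multiplicative constant from $c_0$ to a smaller $c$.
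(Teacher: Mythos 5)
Your core approach is correct and is clearly the intended one: the paper supplies no proof, stating only that Lemma~\ref{large s} ``is an immediate corollary of Lemma~\ref{markov2}'', and your reparametrization $\sigma := s\log N/(\log N+1)$ (so that $\lfloor\sigma(\log N+1)\rfloor=\lfloor s\log N\rfloor$ exactly), followed by case-splitting on whether the target bound exceeds $1$, is exactly the bookkeeping needed. Two small points, however, deserve attention.

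First, a minor arithmetic slip: $\sigma\in[s/2,s]$ requires $\log N\ge 1$, i.e.\ $N\ge 3$; for $N=2$ one has $\sigma=s\log 2/(\log 2+1)\approx 0.41\,s<s/2$. This is harmless --- it suffices to replace $s/2$ by $\kappa s$ with $\kappa:=\log 2/(\log 2+1)$ for all $N\ge 2$, which just produces a slightly smaller final constant $c=c_0\kappa$ --- but as written the inequality is false.

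Second, and more substantively, your dismissal of the $N=1$ case is incorrect. The right-hand side of \eqref{eq:larges} is $e^{s-cs\log(s/t)}$, which does not depend on $N$ and is certainly not $e^0$ when $N=1$, $s>0$: the convention $0\cdot\infty=0$ only kills the exponent when $s=0$, not when $\log N=0$. For $N=1$ one has $\lfloor s\log 1\rfloor=0$ for \emph{all} $s$, so the left-hand side is identically $1$, while the right-hand side drops below $1$ as soon as $s/t>e^{1/c}$; thus the stated inequality actually fails for $N=1$, $s>0$, $t$ small. This appears to be a minor oversight in the paper's ``for all $N\in\N$'' (the lemma is only ever invoked as $N\to\infty$, so the content is unaffected), but your proposal should acknowledge that the bound requires $N\ge 2$ rather than assert a spurious triviality at $N=1$.
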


\subsection{Proof of \eqref{eq:unibo}}
\label{sec:unibo0}
Recall the definition \eqref{eq:YN} of $Y^{(N)}_s$.
From the definition \eqref{UN} of $U_{N,\lambda}(n)$
and the upper bound \eqref{eq:pezzofinlevydens} (which we can apply
for $\frac{n}{N} \le 1$), we get
for large $N$
\begin{equation} \label{eq:desaim}
\begin{split}
	U_{N,\lambda}(n) 
	= \sum_{k\ge 0} \lambda^k \,
	\P\Big(Y^{(N)}_{\frac{k}{\log N}} = \tfrac{n}{N} \Big)
	\le C \, \frac{\log N}{N} \,
	\Bigg\{ \frac{1}{\log N} \sum_{k \ge 0} \lambda^k \,
	f_{c\frac{k}{\log N}}\big( \tfrac{n}{N}\big) \Bigg\} \,.
\end{split}
\end{equation}
We now choose $\lambda = \lambda_N$ as in \eqref{eq:lambdaN}.
Then for some $A \in (0,\infty)$ we have
\begin{equation*}
	\lambda_N \le 1 + A \, \tfrac{\theta}{\log N} \le 
	e^{A \, \frac{\theta}{\log N} } \,,
	\qquad \forall N \in \N \,,
\end{equation*}
hence
\begin{equation} \label{eq:riesu}
	U_{N,\lambda_N}(n)  \le C \, 
	\frac{\log N}{N} \,
	\Bigg\{ \frac{1}{\log N} \sum_{k \ge 0} e^{\frac{k}{\log N} \, A \, \theta} \,
	f_{c\frac{k}{\log N}}\big( \tfrac{n}{N}\big) \Bigg\} \,.
\end{equation}
The bracket is a Riemann sum, which converges as $N\to\infty$ to the corresponding
integral. It follows that for every $N\in\N$ we can write,
recalling \eqref{eq:G0},
\begin{equation} \label{eq:riein}
	U_{N,\lambda_N}(n)  \le C' \, 
	\frac{\log N}{N} \,
	\bigg\{ \int_0^\infty e^{s \, A \, \theta} \,
	f_{cs}\big( \tfrac{n}{N}\big) \, \dd s \bigg\} 
	= \frac{C'}{c} \, 
	\frac{\log N}{N} \, G_{\frac{A}{c} \theta}\big(\tfrac{n}{N}\big) \,,
\end{equation}
for some constant $C'$. (The fact that $C'$ is uniform
over $1 \le n \le N$ is proved below.)

To complete the proof of \eqref{eq:unibo}, we can
replace $G_{\frac{A}{c} \theta}\big(\tfrac{n}{N}\big)$ 
by $G_{\theta}\big(\tfrac{n}{N}\big)$, possibly enlarging the constant $C'$,
because the function $t \mapsto G_{\theta}(t)$ is strictly positive, continuous
and its asymptotic behavior as $t \to 0$ for different values of $\theta$
is comparable, by Proposition~\ref{prop:asGthetac}.
(Note that in Theorem~\ref{th:WR} the parameter $\theta$ is fixed.)

\smallskip

We finally prove the following claim:
\emph{we can bound the Riemann sum in \eqref{eq:riesu}
by a multiple of
the coresponding integral in \eqref{eq:riein}, uniformly over $1 \le n \le N$}.
By \eqref{eq:scalingf} we can write
\begin{equation} \label{eq:repfs}
	e^{s A\theta} \, f_{cs}(t) =
	\frac{1}{t} \, \exp\Big( \big(\log t + \frac{A}{c} \, \theta - \gamma\big)cs - \log \Gamma(cs) \Big) \,.
\end{equation}
Since $\log \Gamma(\cdot)$ is smooth and strictly convex, 
given any $t \in (0,\infty)$,
the function $s \mapsto e^{s  A \theta} \, f_{cs}(t)$ 
is increasing for
$s \le \bar{s}$ and decreasing for $s \ge \bar{s}$, where
$\bar{s} = \bar{s}(t, A\, \theta, c)$ is characterized by
\begin{equation} \label{eq:Gamma'}
	(\log \Gamma)'(c \bar{s}) = \log t + \frac{A}{c} \, \theta - \gamma \,.
\end{equation}
Henceforth we fix $t = \frac{n}{N}$, with $1 \le n \le N$.

Let us now define $s_k := \frac{k}{\log N}$ and write
\begin{equation} \label{eq:rierew}
\begin{split}
	\frac{1}{\log N} \sum_{k \ge 0} e^{\frac{k}{\log N} A \, \theta} \,
	f_{c\frac{k}{\log N}}\big( \tfrac{n}{N}\big)
	= \sum_{k \ge 0} \tfrac{1}{\log N} \, e^{s_k \, A \, \theta} \,
	f_{c s_k}\big( \tfrac{n}{N}\big) \,.
\end{split}
\end{equation}
If we set $\bar k:= \max\{k\ge 0: \ s_k \le \bar{s}\}$, so that
$s_{\bar k} \le \bar{s} < s_{\bar k+1}$, we note that
each term in the sum \eqref{eq:rierew}
with $k \le \bar k - 1$ (resp.\ with $k \ge \bar k + 2$) 
can be bounded from above by the corresponding
integral on the interval $[s_k, s_{k+1})$ (resp.\ on the interval
$[s_{k-1}, s_k)$), by monotonicity of the function
$s \mapsto e^{s A \theta} \, f_{cs}(t)$.
For the two remaining terms, corresponding to $k=\bar k$ and $k = \bar k + 1$,
we replace $s_k$ by $\bar s$ where the maximum is achieved. This yields
\begin{equation} \label{eq:sumintsum}
	\frac{1}{\log N} \sum_{k \ge 0} e^{\frac{k}{\log N} A \, \theta} \,
	f_{c\frac{k}{\log N}}\big( \tfrac{n}{N}\big)
	\le \int_0^\infty e^{s A \theta} \,
	f_{c s}\big( \tfrac{n}{N}\big) \, \dd s
	\ + \ \tfrac{2}{\log N} \, e^{\bar s  A \theta} \,
	f_{c \bar s}\big( \tfrac{n}{N}\big) \,.
\end{equation}

It remains to deal with the last term.
Recall that $s \mapsto e^{s A \theta} f_{cs}(\frac{n}{N})$ is maximized
for $s = \bar s$.
We will show that shifting $\bar s$ by $\frac{1}{\log N}$ decreases
the maximum by a multiplicative constant:
\begin{equation} \label{eq:ficl}
	c := \sup_{N\in\N, \ 1 \le n \le N}  \
	\frac{e^{\bar s \, A \, \theta} \,
	f_{c \bar s}( \tfrac{n}{N})}{e^{(\bar s + \frac{1}{ \log N}) \, A \, \theta} \,
	f_{c(\bar s + \frac{1}{\log N})}( \tfrac{n}{N})}
	\ < \ \infty \,.
\end{equation}
Since $s \mapsto e^{s A \theta} f_{cs}(\frac{n}{N})$ is decreasing for $s \ge \bar s$,
we can bound the last term in \eqref{eq:sumintsum} as follows:
\begin{equation*}
	\tfrac{2}{\log N} \, e^{\bar s \, A \, \theta} \,
	f_{c \bar s}\big( \tfrac{n}{N}\big)
	\le 2 c \, \int_{\bar s}^{\bar s + \frac{1}{\log N}}
	e^{s  A \theta} \,
	f_{c s}\big( \tfrac{n}{N}\big) \, \dd s
	\le 2 c \, \int_{0}^\infty
	e^{s A \theta} \,
	f_{c s}\big( \tfrac{n}{N}\big) \, \dd s \,,
\end{equation*}
which completes the proof of the claim.

It remains to prove \eqref{eq:ficl}. By the representation \eqref{eq:repfs},
the ratio in \eqref{eq:ficl} equals
\begin{equation*}
\begin{split}
	& \exp\big\{ -\big(\log \tfrac{n}{N} + \tfrac{A}{c} \, \theta - \gamma\big) \tfrac{c}{\log N}
	+ \big( \log \Gamma(c \bar s + \tfrac{c}{\log N})
	- \log \Gamma(c\bar s) \big) \big\} \\
	& \ \le \exp\big\{ O(1)
	+ \tfrac{c}{\log N} \, (\log \Gamma)'(c\bar s + \tfrac{c}{\log N}) \big\}\, ,
\end{split}
\end{equation*}
by $1 \le n \le N$ and by convexity of $\log \Gamma(\cdot)$.
It follows by \eqref{eq:Gamma'} 
that $\bar s$ is uniformly bounded from above
(indeed $\bar s \le A \theta/c - \gamma$, because $t = \frac{n}{N} \le 1$
and $(\log \Gamma)'(\cdot)$ is increasing).
Then $(\log \Gamma)'(c\bar s + \tfrac{c}{\log N}) \le
(\log \Gamma)'(A \theta - c\gamma + \tfrac{c}{\log N})$ is also
uniformly bounded from above.
\qed

\subsection{Proof of \eqref{eq:UNas}}
\label{sec:UNas}

We organize the proof in three steps.

\medskip

\noindent
\textbf{Step 1.}
We first prove an ``integrated version'' of \eqref{eq:UNas}.
Let us define a measure $G_\lambda^{(N)}$ on $[0,\infty)$ as follows:
\begin{equation}\label{eq:G^N}
	G_\lambda^{(N)}(\,\cdot\,) :=
	\frac{1}{\log N} \, \sum_{n =0}^{\infty} U_{N,\lambda}(n) 
	\, \delta_{\frac{n}{N}}(\,\cdot\,) \,,
\end{equation}
where $\delta_t(\,\cdot\,)$ is the Dirac mass at $t$,
and $U_{N,\lambda}(\cdot)$ is defined in \eqref{UN}.
Recall also \eqref{eq:G0}.

\begin{lemma}\label{th:weakGU}
Fix $\theta \in \R$ and choose $\lambda = \lambda_N$ as in \eqref{eq:lambdaN}.
As $N\to\infty$, the measure $G_{\lambda_N}^{(N)}$ converges vaguely to
$G_\theta(t) \, \dd t$, i.e.\ for every 
compactly supported continuous $\phi: [0,\infty) \to \R$
\begin{equation} \label{eq:limNint}
	\int_0^{\infty} \phi(t) \, G_{\lambda_N}^{(N)}(\dd t) 
	\, \xrightarrow[\,N\to\infty\,]{} \, \int_0^{\infty}
	\phi(t) \, G_\theta(t) \, \dd t \,.
\end{equation}
\end{lemma}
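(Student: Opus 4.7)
The plan is to rewrite the left-hand side of \eqref{eq:limNint} as a Riemann-like sum over $k\in\N_0$ (with step size $\frac{1}{\log N}$) in a time variable $s = k/\log N$, and then pass to the limit by dominated convergence. Unfolding the definitions of $G^{(N)}_\lambda$ and $U_{N,\lambda}$, and recalling $Y^{(N)}_s := \tau^{(N)}_{\lfloor s\log N\rfloor}/N$ from \eqref{eq:YN}, one has
\begin{equation*}
	\int_0^\infty \phi(t)\, G^{(N)}_{\lambda_N}(\dd t)
	\;=\; \frac{1}{\log N}\sum_{k\ge 0} \lambda_N^{\,k}\,\E\!\left[\phi\big(Y^{(N)}_{k/\log N}\big)\right]
	\;=\; \int_0^\infty h_N(s)\,\dd s,
\end{equation*}
where $h_N(s) := \lambda_N^{\lfloor s\log N\rfloor}\,\E[\phi(Y^{(N)}_{\lfloor s\log N\rfloor/\log N})]$ is a step function in $s$.

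Next, I would identify the pointwise limit of $h_N(s)$ for each fixed $s>0$. The exponential weight satisfies $\lambda_N^{\lfloor s\log N\rfloor} \to e^{\theta s}$ by \eqref{eq:lambdaN}. For the expectation, Proposition~\ref{prop:YN0} yields $Y^{(N)}_s \to Y_s$ in distribution, and since $\phi$ is bounded and continuous this gives $\E[\phi(Y^{(N)}_s)] \to \E[\phi(Y_s)]$. Hence $h_N(s)\to e^{\theta s}\,\E[\phi(Y_s)]$ pointwise. The integrated limit
\begin{equation*}
	\int_0^\infty e^{\theta s}\,\E[\phi(Y_s)]\,\dd s \;=\; \int_0^\infty \phi(t)\left(\int_0^\infty e^{\theta s} f_s(t)\,\dd s\right)\dd t \;=\; \int_0^\infty \phi(t)\,G_\theta(t)\,\dd t
\end{equation*}
is then the desired right-hand side, where Fubini--Tonelli is justified by the compact support of $\phi$ together with the local integrability of $G_\theta$ established in Proposition~\ref{prop:asGthetac}.

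The only real obstacle is uniform domination of $h_N$ by an $s$-integrable function, which is what allows the interchange of limit and integral. Assume $\mathrm{supp}(\phi)\subseteq[0,T]$. Then $|h_N(s)| \le \|\phi\|_\infty\, \lambda_N^{\lfloor s\log N\rfloor}\, \P(Y^{(N)}_{\lfloor s\log N\rfloor/\log N}\le T)$. For $N$ large, $\lambda_N \le \exp(A|\theta|/\log N)$ for some $A<\infty$, so $\lambda_N^{\lfloor s\log N\rfloor}\le e^{A|\theta| s}$. The probability factor is controlled by Lemma~\ref{large s}, giving the super-exponential bound $\P(Y^{(N)}_s \le T)\le e^{s - cs\log(s/T)}$ for $s\ge 0$. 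Together these produce a dominating function $g(s) = \|\phi\|_\infty\,\exp(A|\theta|s + s - cs\log^+(s/T))$, which is integrable on $[0,\infty)$. Dominated convergence then yields \eqref{eq:limNint}.

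The technical heart of the argument is thus the large deviation estimate of Lemma~\ref{large s}: weak convergence of $Y^{(N)}_s$ alone would not suffice, since one must rule out that very large $k$ (i.e.\ $s\gg 1$), where $Y^{(N)}_s$ is typically far from the support of $\phi$ but still has a small probability of landing inside it, contribute non-negligibly after multiplication by the large exponential weight $\lambda_N^k$. The super-exponential decay in $s$ beats the exponential weight and guarantees integrability.
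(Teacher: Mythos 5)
Your proof is correct and follows essentially the same route as the paper's: rewrite the sum as an integral over $s$ of a step function, identify the pointwise limit $e^{\theta s}\,\E[\phi(Y_s)]$ via \eqref{eq:lambdaN} and Proposition~\ref{prop:YN0}, and control the interchange of limit and integral using the super-exponential tail bound of Lemma~\ref{large s}. The only cosmetic difference is that the paper splits into a bounded interval $[0,M]$ plus a uniformly small tail, whereas you package both into a single global dominating function, which is equally valid.
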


\begin{proof}
Recalling the definition \eqref{UN} of $U_{N,\lambda}(n)$, we can write
\begin{equation} \label{eq:eqsp}
\begin{split}
	\int_0^{\infty} \phi(t) \, G_{\lambda_N}^{(N)}(\dd t) 
	&\,=\, \frac{1}{\log N} \, 
	\sum_{n =0}^{\infty} U_{N,\lambda}(n) \, \phi\big(\tfrac{n}{N}\big) \\
	&\,=\, \frac{1}{\log N} \, \sum_{k\ge 0} (\lambda_N)^k \, 
	\E \Big[ \phi\big( \tfrac{\tau^{(N)}_k}{N}\big) 
	\Big] \\
	&\,=\, \int_0^\infty
	(\lambda_N)^{\lfloor s \log N \rfloor} \,  \E \Big[ \phi\big( 
	\tfrac{\tau^{(N)}_{\lfloor s \log N \rfloor}}{N}\big) 
	\Big] \, \dd s  \,.
\end{split}
\end{equation}
Note that $\lim_{N\to\infty} (\lambda_N)^{\lfloor s \log N \rfloor} = e^{\theta s}$,
by \eqref{eq:lambdaN}. Similarly, by Proposition~\ref{prop:YN0}
\begin{equation*}
	\lim_{N\to\infty} \E \Big[ \phi\big( 
	\tfrac{\tau^{(N)}_{\lfloor s \log N \rfloor}}{N}\big) 
	\Big] \,=\, \E\big[ \phi\big( Y_s \big) 
	\big] \,.
\end{equation*}
Interchanging limit and integral, which we justify in a moment, we obtain
from \eqref{eq:eqsp}
\begin{equation*}
\begin{split}
	\lim_{N\to\infty}
	\int_0^{\infty} \phi(t) \, G_{\lambda_N}^{(N)}(\dd t) 
	& = \int_0^\infty
	e^{\theta s} \,  \E \big[ \phi\big( Y_s \big) 
	\big] \, \dd s \,.
\end{split}
\end{equation*}
If we write $\E \big[ \phi\big( Y_s \big) 
\big] = \int_0^\infty \phi(t) \, f_s(t) \, \dd t$, we have proved \eqref{eq:limNint}
(recall \eqref{eq:G0}).

Let us finally justify that we can bring the limit inside the integral in \eqref{eq:eqsp}.
Since $(\lambda_N)^{\lfloor s \log N \rfloor} \le e^{C s}$ for some constant $C$,
by \eqref{eq:lambdaN}, and since $\phi$ is bounded,
we can apply dominated convergence on any bounded interval $s \in [0,M]$.
It remains to show that the integral restricted
to $s \in [M,\infty)$ is small for large $M$,
uniformly in $N\in\N$. To this purpose, we use Lemma~\ref{large s}:
since $\phi$ is compactly supported, say in $[0,A]$,
the bound \eqref{eq:larges} yields
\begin{equation*}
	\|\phi\|_\infty \, \int_M^\infty
	e^{Cs} \, \P( \tau^{(N)}_{\lfloor s \log N \rfloor} \le A N ) \, \dd s
	\le \|\phi\|_\infty \, \int_M^\infty
	e^{s (C+1 - c\, \log \frac{s}{A})} \, \dd s \,.
\end{equation*}
If we take $M$ large, so that $c \log \frac{M}{A} \ge C + 2$,
the integral is at most $\int_M^\infty e^{-s} \, \dd s = e^{-M}$.
\end{proof}

\medskip
\noindent
\textbf{Step 2.}
We now derive representation formulas
for $U_{N,\lambda}(n)$ and $G_\theta(t)$:
for any $\bar n, \bar t \in (0,\infty)$
\begin{align} \label{eq:Uren}
	U_{N,\lambda}(n) & =
	\lambda \, \sum_{0 \le l < 
	\bar{n} \le m \le n} U_{N,\lambda}(l) \, \P(T_1^{(N)}=m-l)
	\, U_{N,\lambda}(n-m)   \ \ \quad
	\forall n \in \N \cap ( \bar{n},\infty) ,\\
	\label{eq:Gren}
	G_\theta(t) &= \int_{0 < u < \bar{t} \le v < t}
	G_\theta(u) \, \frac{1}{v-u}\, \ind_{(0,1)}(v-u)
	\, G_\theta(t-v) \, \dd u \, \dd v \,, \qquad
	\forall t\in ( \bar{t} , \infty) \,.
\end{align}
(Note that for $t \in (0,1]$ the indicator function $\ind_{(0,1)}(v-u) \equiv 1$ disappears.)

Relation \eqref{eq:Uren} is obtained through a renewal decomposition: if we sum over
the unique index $i \in \{1,\ldots, k\}$ such that 
$\tau^{(N)}_{i-1} < \bar{n}$ while $\tau^{(N)}_i \ge \bar{n}$,
we can write
\begin{equation*}
\begin{split}
	\P(\tau^{(N)}_k=n) 
	&= \sum_{i=1}^k \P\big( \tau^{(N)}_{i-1} < \bar{n},
	\, \tau^{(N)}_i \ge \bar{n}, \, \tau^{(N)}_k = n \big) \\
	&= \sum_{0 \le l < \bar{n} \le m \le n} \
	\sum_{i=1}^k \P\big( \tau^{(N)}_{i-1} = l) \,
	\P\big( T^{(N)}_1 = m-l\big)
	\, \P\big( \tau^{(N)}_{k-i} = n-m \big) \,.
\end{split}
\end{equation*}
Plugging this into the definition \eqref{UN} of $U_{N,\lambda}(n)$,
we obtain \eqref{eq:Uren}.

Next we prove \eqref{eq:Gren}. Define the stopping time $\tau := \inf\{r \in [0,\infty):
Y_r > \bar{t}\}$ and note that $Y_{\tau-} \le \bar{t}$,
$Y_\tau > \bar{t}$. The joint law of $(\tau, Y_{\tau-}, Y_\tau)$ is explicit:
for $r \in (0,\infty)$ and $u \le \bar{t} < v$
\begin{equation*}
\begin{split}
	\P(\tau \in \dd r, \, Y_{\tau-} \in \dd u, \, Y_\tau \in \dd v) & =
	\dd r \, \P(Y_r \in \dd u) \, \nu(\dd v-u) \\
	& =
	\dd r \, \P(Y_r \in \dd u) \, \frac{1}{v-u} \, \ind_{(0,1)}(v-u) \, \dd v \,,
\end{split}
\end{equation*}
by a slight generalization of \cite[Prop.~2 in Ch.~III]{Ber96}. 
By the strong Markov property
\begin{equation*}
\begin{split}
	\P(Y_s \in \dd t) 
	&= \int_{(0,s) \times (0,\bar{t}) \times (\bar{t},t)} \P(\tau \in \dd r, 
	\, Y_{\tau-} \in \dd u , \, Y_\tau \in \dd v) \, \P(Y_{s-r}\in \dd t - v) \\
	&= \int_0^s \dd r
	\int_{0 < u < \bar{t} < v < t} 
	\P(Y_r \in \dd u) \, \frac{1}{v-u} \, \ind_{(0,1)}(v-u) \, \dd v
	\, \P(Y_{s-r}\in \dd t - v) \,,
\end{split}
\end{equation*}
which yields a corresponding relation between densities:
\begin{equation*}
	f_s(t) = \int_0^s \dd r
	\int_{0 < u < \bar{t} < v < t} 
	f_r(u) \, \frac{1}{v-u} \, \ind_{(0,1)}(v-u)\, f_{s-r} (t - v) 
	\, \dd u \, \dd v\,.
\end{equation*}
Multiplying by $e^{\theta s} = e^{\theta r} e^{\theta(s-r)}$ and
integrating over $s \in (0,\infty)$,
we get \eqref{eq:Gren} (recall \eqref{eq:G0}).


\medskip
\noindent
\textbf{Step 3.}
The final step in the proof of \eqref{eq:UNas}
consists in combining formulas \eqref{eq:Uren}-\eqref{eq:Gren}
with Lemma~\ref{th:weakGU}. First of all we note that
in order to prove \eqref{eq:UNas} uniformly for $\delta N \le n \le N$,
it suffices to consider an arbitrary but fixed sequence $n = n_N$ such that
\begin{equation}\label{eq:tN}
	t_N := \frac{n_N}{N} \xrightarrow[\,N\to\infty\,]{} t \in (0,1] \,,
\end{equation}
and prove that
\begin{equation}\label{eq:UNasbis}
	\lim_{N\to\infty} \, \frac{N}{\log N} \, U_{N,\lambda_N}(n_N)
	\,=\, G_\theta(t) \,.
\end{equation}
This implies \eqref{eq:UNas}, as one can prove by contradiction.

Let us prove \eqref{eq:UNasbis}.
Recalling \eqref{eq:G^N}, we first rewrite
\eqref{eq:Uren},
with $\bar{n} = n_N/2$, as a double integral,
setting $u := l/N$ and $v := m/N$, as follows (we recall that $t_N = \frac{n_N}{N}$):
\begin{equation} \label{eq:quasifin}
	\frac{N}{\log N} \, U_{N,\lambda_N}(n_N) =
	\lambda_N \, 
	\int\limits_{0 \le u < \frac{t_N}{2} \le v \le 
	t_N} G_{\lambda_N}^{(N)}\big( \dd u \big) \; \phi^{(N)}(u,v)
	\; G_{\lambda_N}^{(N)}\big( t_N - \dd v \big) \, ,
\end{equation}
where we set, for $0 \le u < v \le 1$,
\begin{equation*} 
	\phi^{(N)}(u,v) := \big( N \, \log N \big) \, \P\big( 
	T_1^{(N)} = \lfloor Nv\rfloor  - \lfloor Nu \rfloor \big) \,.
\end{equation*}
Note that, by \eqref{eq:XN}-\eqref{eq:RN}, we have
\begin{equation} \label{eq:phiNconv}
	\lim_{N\to\infty} \, \phi^{(N)}(u,v) =
	\phi(u,v) := \frac{1}{v-u} \,.
\end{equation}
By Lemma~\ref{th:weakGU} and \eqref{eq:tN}, we have the 
vague convergence of the product measure
\begin{equation}\label{eq:weaco}
	G_{\lambda_N}^{(N)}\big( \dd u \big)
	\, G_{\lambda_N}^{(N)}\big( t_N - \dd v \big) \,\xrightarrow[\,N\to\infty\,]{v}\,
	G_{\theta}(u) \, G_\theta(t-v) \, \dd u \, \dd v \,.
\end{equation}
Since $\lambda_N \to 1$, see \eqref{eq:lambdaN},
by \eqref{eq:phiNconv} and \eqref{eq:weaco}
it is natural to expect that the right hand side of
\eqref{eq:quasifin} converges to the right hand side of \eqref{eq:Gren}
with $\bar{t}=\frac{t}{2}$.
This is indeed the case, as we now show,
which would complete the proof of \eqref{eq:UNasbis}, hence of Theorem~\ref{th:WR}.

\smallskip

We are left with justifying the convergence of the right hand side of
\eqref{eq:quasifin}. The delicate point is that $\phi(u,v)$ in \eqref{eq:phiNconv}
diverges as $v-u \downarrow 0$.
Fix $\epsilon > 0$ and consider the domain
\begin{equation} \label{eq:DN}
	D_{\epsilon} := \big\{ (u,v): \
	v-u \ge \epsilon \, t \big\} \,.
\end{equation}
The convergence in \eqref{eq:phiNconv} holds \emph{uniformly
over $(u,v) \in D_{\epsilon}$},
and the limiting function $\frac{1}{v-u}$ is bounded and continuous 
on $D_\epsilon$. Then, by \eqref{eq:weaco}, the integral in the right
hand side of \eqref{eq:quasifin} restricted on $D_{\epsilon}$
converges to the integral in the right hand side of \eqref{eq:Gren}
restricted  on $D_{\epsilon}$.

To complete the proof, it remains to show that the integral in the
right hand side of \eqref{eq:quasifin} 
restricted on $D_\epsilon^c = \{v-u \le \epsilon \, t\}$
is small for $\epsilon > 0$ small, uniformly in (large) $N\in\N$.
By the definition \eqref{eq:G^N} of $G^{(N)}_\lambda(\cdot)$,
as well as \eqref{eq:XN}-\eqref{eq:RN},
this contribution is bounded by
\begin{equation} \label{eq:topass}
\begin{split}
	& C_1 \, \sum_{\substack{u, v \in \frac{1}{N} \N_0:\\ 0 \le u < \frac{t_N}{2} \le v \le 
	t_N \,, \ v-u \le \epsilon t}}
	\frac{U_{N,\lambda_N}(N u)}{\log N} 
	\, \frac{1}{v-u} \, 
	\frac{U_{N,\lambda_N}(N (t_N - v))}{\log N} \,,
\end{split}
\end{equation}
where $C_1, C_2, \ldots$ are generic constants.
By the upper bound \eqref{eq:unibo}, this is at most
\begin{equation} \label{eq:dosu}
	C_2 \, \frac{1}{N^2}
	\sum_{\substack{u, v \in \frac{1}{N} \N_0:\\ 0 \le u < \frac{t_N}{2} \le v \le 
	t_N \,, \ v-u \le \epsilon t}}
	G_\theta(u) \, \frac{1}{v-u} \, G_\theta(t_N-v) \,.
\end{equation}
Since $t_N \to t$, see \eqref{eq:tN},
we can bound this Riemann sum by the corresponding integral:
\begin{equation*}
	C_3 \, \int\limits_{0 < u < \frac{t}{2} \le v < t \,, \ v-u \le \epsilon \, t}
	G_\theta(u) \, \frac{1}{v-u} \, G_\theta(t-v) \, \dd u \, \dd v \,.
\end{equation*}
Finally, if we let $\epsilon \downarrow 0$, this integral vanishes
by dominated convergence (recall \eqref{eq:Gren}).\qed

\medskip

\section{Proof of Theorem~\ref{th:WR}: case $T > 1$}
\label{sec:WR>}

In this section we prove Theorem~\ref{th:WR} in case $T > 1$.
Without loss of generality, we may assume that $T \in \N$.
The case $T=1$ was already treated in Section~\ref{sec:WR}.
Proceeding inductively, we 
assume that Theorem~\ref{th:WR} holds for some fixed value of $T \in \N$,
and our goal is to prove that \emph{relations \eqref{eq:UNas}
and \eqref{eq:unibo} hold for $TN < n \le (T+1)N$}.

\smallskip

Let us rewrite relation \eqref{eq:Uren} for $\bar{n} = TN$
and \eqref{eq:Gren} for $\bar{t} = T$:
\begin{align}\label{eq:Uren+}
	U_{N,\lambda}(n) & =
	\lambda \, \sum_{0 \le l < TN \le m \le n} U_{N,\lambda}(l) \, \P(T_1^{(N)}=m-l)
	\, U_{N,\lambda}(n-m) \, , \qquad
	\forall n > TN \,,\\
	\label{eq:Gren+}
	G_\theta(t) &= \int_{0 < u < T \le v < t}
	G_\theta(u) \, \frac{1}{v-u}\, \ind_{(0,1)}(v-u)
	\, G_\theta(t-v) \, \dd u \, \dd v \,, \qquad
	\forall t > T \,.
\end{align}

\subsection{Proof of \eqref{eq:UNas}}

Since we focus on the range $TN < n \le (T+1)N$, in  \eqref{eq:Uren+}
we have both $l \le TN$ and $n-m \le N$,
hence we can bound $U_{N,\lambda_N}(l)$ and $U_{N,\lambda_N}(n-m)$
using \eqref{eq:UNas}, by the inductive assumption.
Bounding $\P(T_1^{(N)}=m-l)$ by \eqref{eq:rn}-\eqref{eq:XN}, we get
\begin{align*}
	U_{N,\lambda_N}(n) & \le C_1 \, \frac{(\log N)^2}{N^2}
	\lambda \, \sum_{0 \le l < TN \le m \le n} G_\theta(\tfrac{l}{N}) 
	\, \frac{\ind_{(0,N]}(m-l)}{(\log N) (m-l)} \, 
	\, G_\theta(\tfrac{n-m}{N}) \,,
\end{align*}
for some constants $C_1, C_2$ (possibly depending on $T$).
By Riemann sum approximation
\begin{equation*}
\begin{split}
	U_{N,\lambda_N}(n) &\le C_1 \, \frac{\log N}{N^3} \, 
	\sum_{0 \le l < TN \le m \le n} G_\theta(\tfrac{l}{N}) 
	\, \frac{1}{(\frac{m-l}{N})} \, \ind_{(0,1]}(\tfrac{m-l}{N})
	\, G_\theta(\tfrac{n-m}{N}) \\
	&\le C_2 \, \frac{\log N}{N} \, 
	\int_{0 < u < T \le v < \frac{n}{N}}
	G_\theta(u) \, \frac{1}{v-u}\, \ind_{(0,1)}(v-u)
	\, G_\theta(\tfrac{n}{N}-v) \, \dd u \, \dd v \,.
\end{split}
\end{equation*}
The integral equals 
$G_\theta(\tfrac{n}{N})$ by \eqref{eq:Gren+}, so relation \eqref{eq:UNas} is proved.

(To check that the Riemann sum approximation constant $C_2$ is uniform for 
$TN < n \le (T+1)N$, one can argue as in Step~3 of Section~\ref{sec:WR}:
just repeat the above steps for an arbitrary but fixed sequence $n = n_N$
such that $\frac{n_N}{N} \to t \in [T,T+1]$. We omit the details.)\qed

\subsection{Proof of \eqref{eq:unibo}}

We can follow Step~3 of Section~\ref{sec:WR} almost verbatim: the only difference
is that for $TN < n \le (T+1)N$ we have $\frac{n_N}{N} \to t \in [T,T+1]$.
To pass from \eqref{eq:topass} to \eqref{eq:dosu}, we can apply the upper bound
\eqref{eq:unibo}, by the inductive assumption.\qed

\medskip

\section{Proof of Theorems~\ref{th:Udiffusive}
and~\ref{th:WR2}}\label{sec:Green}

We first prove Theorem~\ref{th:Udiffusive}, i.e.\
relation \eqref{eq:Udiffusive}, which is easy.
We then reduce the proof of Theorem~\ref{th:WR2} to that
of Theorem~\ref{th:WR}, given
in Section~\ref{sec:WR},
proving separately the upper bound \eqref{eq:unibo2}
and the local limit theorem \eqref{eq:bsUNas}.
We assume for simplicity that $T=1$, i.e.\ we focus on $n \le N$,
because
the case $T > 1$ can be deduced arguing as in Section~\ref{sec:WR>}.

\subsection{Proof of \eqref{eq:Udiffusive}}

By \eqref{eq:bsXN} and \eqref{eq:proppnx},
conditioned on the $T_i^{(N)}$'s,
the random variables $X_i^{(N)}$ are independent
with zero mean and $\E\big[\big| X^{(N)}_i\big|^2 \,\big|\, T^{(N)}_i=n_i\big] \leq c\,  n_i$
for some $c < \infty$, see~\eqref{eq:EX12}.
Recalling \eqref{eq:tauS}, we then have
\begin{equation*}
	\E\Big[ \big| S_k^{(N)}\big|^2 \,\Big|\, T_1^{(N)} = n_1, \ldots,
	T_k^{(N)} = n_k\Big] = \sum_{i=1}^k 
	\E\Big[ \big| X_i^{(N)}\big|^2 \,\Big|\, T_i^{(N)} = n_i\Big] 
	\le c \big( n_1 + \ldots + n_k \big) \,,
\end{equation*}
for any choice of $n_1, \ldots, n_k \in \N$. It follows that
$\E\big[ \big| S_k^{(N)}\big|^2 \,\big|\, \tau_k^{(N)} = n \big] \le c \, n$, hence
\begin{equation*}
	\sum_{x\in\Z^2: \, |x| > M \sqrt{n}}
	\P\big( \tau_k^{(N)} = n \,, \ S_k^{(N)} = x \big)
	= \P\big( \tau_k^{(N)} = n \,, \ |S_k^{(N)}| > M \sqrt{n} \big)
	\le \frac{c}{M^2} \, \P\big( \tau_k^{(N)} = n \big) \,,
\end{equation*}
by Markov's inequality.
Multiplying by $\lambda^k$ and summing over $k$,
we obtain \eqref{eq:Udiffusive}.\qed

\subsection{Proof of \eqref{eq:unibo2}}
Recall the definition \eqref{eq:bsYN} of $\bsY^{(N)}_s$.
From the definition \eqref{UN2} of $\bsU_{N,\lambda}(n,x)$
and the upper bound \eqref{eq:pezzofinlevydens2}, we get
for large $N$ and $n \le N$
\begin{equation*}
\begin{split}
	\bsU_{N,\lambda}(n,x) 
	= \sum_{k\ge 0} \lambda^k \,
	\P\Big(\bsY^{(N)}_{\frac{k}{\log N}} = (\tfrac{n}{N}, \tfrac{x}{\sqrt{N}}) \Big)
	\le C \, \frac{\log N}{N} \, \frac{1}{n^{d/2}} \,
	\Bigg\{ \frac{1}{\log N} \sum_{k \ge 0} \lambda^k \,
	f_{c\frac{k}{\log N}}\big( \tfrac{n}{N}\big) \Bigg\} \,.
\end{split}
\end{equation*}
The bracket is the same as in \eqref{eq:desaim}.
We showed in Subsection~\ref{sec:unibo0} that,
if $\lambda = \lambda_N$ is chosen as in \eqref{eq:lambdaN},
the bracket is at most a constant times $G_\theta(\frac{n}{N})$.
This proves \eqref{eq:unibo2}.\qed

\subsection{Proof of \eqref{eq:bsUNas}}
We proceed in three steps.

\medskip

\noindent
\textbf{Step 1.}
We first prove an ``integrated version'' of \eqref{eq:bsUNas}.
We define a measure $\bsG_\lambda^{(N)}$ on $[0,\infty) \times \R^2$ 
by setting
\begin{equation}\label{eq:bsG^N}
	\bsG_\lambda^{(N)}(\,\cdot\,) :=
	\frac{1}{\log N} \, \sum_{n =0}^{\infty}
	\, \sum_{x \in \Z^2} \, \bsU_{N,\lambda}(n,x) 
	\, \delta_{(\frac{n}{N}, \frac{x}{\sqrt{N}})}(\,\cdot\,) \,,
\end{equation}
where we recall that $\bsU_{N,\lambda}(\cdot)$ is defined in \eqref{UN2}.
Recall also the definition \eqref{eq:G} of $\bsG_\theta(t,x)$.

\begin{lemma}\label{th:weakGU2}
Fix $\theta \in \R$ and choose $\lambda = \lambda_N$ as in \eqref{eq:lambdaN}.
For every bounded and continuous $\bs\phi: [0,\infty) \times \R^2 \to \R$,
which is compactly supported in the first variable,
\begin{equation} \label{eq:limNint2}
	\int_{[0,\infty) \times \R^2} \bs\phi(t,x) \, 
	\bsG_{\lambda_N}^{(N)}(\dd t , \dd x) 
	\, \xrightarrow[\,N\to\infty\,]{} \, \int_{[0,\infty) \times \R^2} \bs\phi(t,x) \, 
	\bsG_\theta(t,x) \, \dd t \, \dd x \,.
\end{equation}
\end{lemma}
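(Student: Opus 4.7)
The plan is to adapt the proof of Lemma~\ref{th:weakGU} almost verbatim, with the key upgrade being to replace the marginal weak convergence from Proposition~\ref{prop:YN0} by its joint space-time counterpart from Proposition~\ref{prop:YN01}.

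First, I would unfold the definitions \eqref{UN2} and \eqref{eq:bsG^N} and rewrite the sum over $k$ as an integral over $s$ via the change of variable $k = \lfloor s \log N \rfloor$, obtaining
\begin{equation*}
\int_{[0,\infty)\times\R^2} \bs\phi(t,x)\, \bsG_{\lambda_N}^{(N)}(\dd t, \dd x)
= \int_0^\infty (\lambda_N)^{\lfloor s \log N \rfloor}\,
\E\bigg[\bs\phi\bigg(\frac{\tau^{(N)}_{\lfloor s \log N\rfloor}}{N}, \frac{S^{(N)}_{\lfloor s \log N\rfloor}}{\sqrt{N}}\bigg)\bigg] \dd s \,,
\end{equation*}
in perfect analogy with \eqref{eq:eqsp}.

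Second, I would identify the pointwise-in-$s$ limit of the integrand: relation \eqref{eq:lambdaN} gives $(\lambda_N)^{\lfloor s\log N\rfloor}\to e^{\theta s}$, while Proposition~\ref{prop:YN01} together with boundedness and continuity of $\bs\phi$ yields convergence of the expectation to $\E[\bs\phi(\bsY^{\sfc}_s)] = \E[\bs\phi(Y_s, V^{\sfc}_s)]$.

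Third, to bring the limit inside the $\dd s$ integral I would split at some large $M$. On $[0,M]$ dominated convergence applies, since $(\lambda_N)^{\lfloor s\log N\rfloor}\le e^{Cs}$ for some $C$ (by \eqref{eq:lambdaN}) and $\bs\phi$ is bounded. For the tail $s\geq M$, since $\bs\phi$ is compactly supported in the first variable, say in $[0,A]$, the integrand is bounded by $\|\bs\phi\|_\infty\, e^{Cs}\, \P(\tau^{(N)}_{\lfloor s\log N\rfloor}\le AN)$, and Lemma~\ref{large s} provides the super-exponential decay in $s$ that makes the tail uniformly small in $N$ for $M$ large. This is the only step that requires any checking, but it proceeds identically to the corresponding step in the proof of Lemma~\ref{th:weakGU}, because the compact-support assumption acts only on the time coordinate and the spatial component does not enter the bound.

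Finally, using the density formula \eqref{eq:densities} and Fubini, the resulting limit is
\begin{equation*}
\int_0^\infty \!\!\int_{\R^2} e^{\theta s}\, \bs\phi(t,x)\, f_s(t)\, g_{\sfc t}(x)\, \dd t\, \dd x\, \dd s
\,=\, \int_{[0,\infty)\times\R^2} \bs\phi(t,x)\, G_\theta(t)\, g_{\sfc t}(x)\, \dd t\, \dd x \,,
\end{equation*}
which equals the right-hand side of \eqref{eq:limNint2} by the definition \eqref{eq:G} of $\bsG_\theta$. There is no substantial obstacle: the only genuinely new ingredient is the joint convergence furnished by Proposition~\ref{prop:YN01}, while the truncation argument and all the analytical estimates carry over unchanged from the one-dimensional proof.
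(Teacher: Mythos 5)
Your proposal is correct and follows essentially the same route as the paper's proof: rewrite the left-hand side as $\int_0^\infty (\lambda_N)^{\lfloor s\log N\rfloor}\,\E[\bs\phi(\cdot)]\,\dd s$, pass to the limit using Proposition~\ref{prop:YN01}, and justify the interchange with dominated convergence via Lemma~\ref{large s} exactly as in Lemma~\ref{th:weakGU}. Your observation that the tail estimate only involves the time coordinate, so the compact-support-in-$t$ assumption suffices, is precisely the point the paper relies on.
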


\begin{proof}
Arguing as in \eqref{eq:eqsp}, we can write
\begin{equation*}
\begin{split}
	& \int_{[0,\infty) \times \R^2} \bs\phi(t,x) \, \bsG_{\lambda_N}^{(N)}(\dd t , \dd x)
	\,=\, \int_0^\infty
	(\lambda_N)^{\lfloor s \log N \rfloor} \,  \E \Big[ \bs\phi\Big( 
	\tfrac{\tau^{(N)}_{\lfloor s \log N \rfloor}}{N}, 
	\tfrac{S^{(N)}_{\lfloor s \log N \rfloor}}{\sqrt{N}}\Big) 
	\Big] \, \dd s  \,.
\end{split}
\end{equation*}
We can exchange $\lim_{N\to\infty}$
with the integral by dominated convergence,
thanks to Lemma~\ref{large s},
as shown in the proof of Lemma~\ref{th:weakGU}.
Then we get, by Proposition~\ref{prop:YN01},
\begin{equation*}
\begin{split}
	\lim_{N\to\infty}
	\int_{[0,\infty) \times \R^2} \bs\phi(t,x) \, \bsG_{\lambda_N}^{(N)}(\dd t , \dd x)
	& = \int_0^\infty
	e^{\theta s} \,  \E \big[ \bs\phi\big( Y_s, V_s^\sfc \big) 
	\big] \, \dd s \\
	& = \int_0^\infty
	e^{\theta s} \, \bigg( \int_{[0,\infty) \times \R^2} \bs\phi( t, x ) \,
	\bsf_s(t,x) \, \dd t \, \dd x \bigg) \dd s \,,
\end{split}
\end{equation*}
which coincides with the right hand side of \eqref{eq:limNint2}
(recall \eqref{eq:G}).
\end{proof}

\medskip
\noindent
\textbf{Step 2.}
Next we give representation formulas
for $\bsU_{N,\lambda}(n,z)$, $\bsG_\theta(t,x)$:
for any $\bar n, \bar t \in (0,\infty)$
\begin{align} \nonumber
	& \bsU_{N,\lambda}(n,x) =
	\lambda \, \!\!\!\!\!
	\sumtwo{0 \le l < \bar{n} \le m \le n}{y, z \in \Z^2} \!\!\!\!\!
	\bsU_{N,\lambda}(l,y) \, \P\big(T_1^{(N)}=m-l, X_1^{(N)}=z-y\big)
	\, \bsU_{N,\lambda}(n-m, x-z) \\
	& \qquad\qquad\qquad\qquad\qquad\qquad\qquad\qquad\qquad\qquad\qquad\qquad
	\forall n\in\N \cap (\bar{n},\infty) \,, \label{eq:Uren2} \\
	\label{eq:Gren2}
	& \bsG_\theta(t,x) = \int\limits_{\substack{0 < u < \bar{t} \le v < t \\
	y,x \in \R^2}}
	\bsG_\theta(u,y) \, \frac{g_{\sfc(v-u)}(z-y)}{v-u} \, 
	\bsG_\theta(t-v, x-z) \, \dd u \, \dd v \qquad
	\forall t \in (\bar{t},\infty) \,.
\end{align}
These relations are proved in the same way as \eqref{eq:Uren} and \eqref{eq:Gren}.

\medskip
\noindent
\textbf{Step 3.}
We finally prove \eqref{eq:bsUNas}
by combining formulas \eqref{eq:Uren2}-\eqref{eq:Gren2}
with Lemma~\ref{th:weakGU2}. 
It suffices to fix arbitrary sequences $n = n_N \in \{1,\ldots, N\}$ and
$x = x_N \in \Z^2$ such that
\begin{equation}\label{eq:tN2}
	t_N := \frac{n_N}{N} \xrightarrow[\,N\to\infty\,]{} t \in (0,1] \,,
	\qquad w_N := \frac{x_N}{\sqrt{N}}
	\xrightarrow[\,N\to\infty\,]{} w \in \R^2 \,,
\end{equation}
and prove that
\begin{equation}\label{eq:UNasbis2}
	\lim_{N\to\infty} \, 
	\frac{N^{1+d/2}}{\log N} \, \bsU_{N,\lambda_N}(n_N, w_N)
	\,=\, \bsG_\theta(t,w)
	\,=\, G_\theta(t) \, g_{\sfc \theta}(w) \,.
\end{equation}

To prove \eqref{eq:UNasbis2}, we rewrite
the sums in \eqref{eq:Uren2} with $\bar n = \frac{n}{2}$ as integrals, recalling \eqref{eq:bsG^N}:
\begin{equation} \label{eq:quasifin2}
\begin{split}
	& \frac{N^{1+d/2}}{\log N} \, \bsU_{N,\lambda_N}(n_N,w_N) \\
	& = \lambda_N \, \!\!\!\!\!
	\int\limits_{\substack{0 \le u < \frac{t_N}{2} \le v \le 
	t_N \\ y,z \in \R^2}} \!\!\!\!\!
	\bsG_{\lambda_N}^{(N)}\big( \dd u, \dd y \big) \; 
	\bs\phi^{(N)}(u,v; \, y, z)
	\; \bsG_{\lambda_N}^{(N)}\big( t_N - \dd v, w_N - \dd z \big) \, ,
\end{split}
\end{equation}
where we set, for $0 \le u < v \le 1$ and $y,z \in \R^2$,
\begin{equation*} 
	\bs\phi^{(N)}(u,v; y,z) :=  N^{1+d/2} \, \log N \, \P\big( 
	T_1^{(N)} = \lfloor Nv\rfloor  - \lfloor Nu \rfloor, \,
	X_1^{(N)} = \lfloor \sqrt{N} z \rfloor  - \lfloor 
	\sqrt{N} y \rfloor \big) \,.
\end{equation*}
Note that by \eqref{eq:proppnx}, \eqref{eq:bsXN}
and \eqref{eq:XN}-\eqref{eq:RN} we have
\begin{equation} \label{eq:phiNconv2}
	\lim_{N\to\infty} \, \bs\phi^{(N)}(u,v; y,z) =
	\bs\phi(u,v; y,z) := \frac{g_{\sfc(v-u)}(z-y)}{v-u} \,.
\end{equation}
Moreover, by Lemma~\ref{th:weakGU2} and \eqref{eq:tN2} we have the 
convergence of the product measure
\begin{equation}\label{eq:weaco2}
	\bsG_{\lambda_N}^{(N)}\big( \dd u, \dd y \big)
	\, \bsG_{\lambda_N}^{(N)}\big( t_N - \dd v,
	w_N - \dd z \big) \,\xrightarrow[\,N\to\infty\,]{}\,
	\bsG_{\theta}(u,y) \, \bsG_\theta(t-v, w-z) \, \dd u \, 
	\dd y \, \dd v \,\dd z \,.
\end{equation}
Since $\lambda_N \to 1$ (see \eqref{eq:lambdaN}),
we expect by \eqref{eq:phiNconv2} and \eqref{eq:weaco2} that the right hand side of
\eqref{eq:quasifin2} converges to the right hand side of \eqref{eq:Gren2}
as $N\to\infty$, proving our goal \eqref{eq:UNasbis2}.

The difficulty
is that the function $\bs\phi^{(N)}(u,v; y,z)$
converges to a function $\bs\phi(u,v; y,z)$ which is singular as $v-u \to 0$,
see \eqref{eq:phiNconv2}.
This can be controlled as in the proof of Theorem~\ref{th:WR}, see the paragraphs
following \eqref{eq:weaco}.
\begin{itemize} 
\item First we fix $\epsilon > 0$ and restrict the
integral in \eqref{eq:quasifin2} to the domain $D_\epsilon = \{v-u \ge \epsilon \, t\}$.
Here we can apply the convergence
\eqref{eq:weaco2}, because $\bs\phi(u,v; y,z)$ is bounded and the convergence
$\bs\phi^{(N)}(u,v; y,z) \to \bs\phi(u,v; y,z)$ is uniform.

\item Then we consider the contribution to the integral in \eqref{eq:quasifin2} 
from $D_\epsilon^c = \{v-u < \epsilon \, t\}$. Recalling
\eqref{eq:bsG^N}, this contribution can be written as follows:
\begin{equation} \label{eq:isbou}
	\sum\limits_{\substack{u,v \in \frac{1}{N} \N_0, \
	y,z \in \frac{1}{\sqrt{N}}\Z^2 \\
	0 \le u < \frac{t_N}{2} \le v \le 
	t_N \,, \ v-u < \epsilon t}}  \!\!\!
	\frac{\bsU_{N,\lambda_N}\big( Nu, \sqrt{N}y \big)}{\log N} \; 
	\bs\phi^{(N)}(u,v; \, y, z)
	\; \frac{\bsU_{N,\lambda_N}\big( N(t_N - v), \sqrt{N}( w_N -  z) \big)}{\log N} \,.
\end{equation}
We need to show that this is small for $\epsilon > 0$ small, uniformly in large $N\in\N$.

By \eqref{eq:unibo2} we can bound, uniformly in $z \in \frac{1}{\sqrt{N}} \Z^2$,
\begin{equation*}
	\frac{\bsU_{N,\lambda_N}\big( N(t_N - v), \sqrt{N}( w_N -  z) \big)}{\log N} \le 
	C \, \frac{1}{N^{1+\frac{d}{2}}} \, \frac{1}{(t_N - v)^{\frac{d}{2}}} \,
	G_\theta\big(t_N - v \big) \,,
\end{equation*}
and note that $t_N - v \ge \frac{t_N}{2}-\epsilon$.
Next, by definition of $\bs\phi^{(N)}$ and by \eqref{eq:XN}-\eqref{eq:RN},
\begin{equation*}
	\sum_{z \in \frac{1}{\sqrt{N}}\Z^2}
	\bs\phi^{(N)}(u,v; \, y, z)
	= N^{1+ \frac{d}{2}} \, (\log N) \, \P\big( 
	T_1^{(N)} = \lfloor Nv\rfloor  - \lfloor Nu \rfloor \big)
	\le C_1 \, \frac{N^{\frac{d}{2}}}{v-u} \,.
\end{equation*}
Finally we observe that, by \eqref{UN}, \eqref{UN2} and \eqref{eq:unibo},
\begin{equation*}
	\sum_{y \in \frac{1}{\sqrt{N}}\Z^2} \frac{\bsU_{N,\lambda_N}\big( Nu, \sqrt{N}y \big)}{\log N}
	= \frac{U_{N,\lambda_N}( Nu )}{\log N}
	\le C \, \frac{1}{N} \, G_\theta(u) \,.
\end{equation*}
These bounds show that \eqref{eq:isbou} is bounded by a constant times
\begin{equation} \label{eq:dede}
	\frac{1}{N^2} \, \frac{1}{(\frac{t_N}{2} - \epsilon)^{\frac{d}{2}}}
	 \sum\limits_{\substack{u,v \in \frac{1}{N} \N_0 \\
	0 \le u < \frac{t_N}{2} \le v \le 
	t_N \,, \ v-u < \epsilon t }}  \!\!\!
	G_\theta(u) \; 
	\frac{1}{v-u}
	\; G_\theta(t_N-v) \,.
\end{equation}
Since $t_N \to t$, we have $\frac{t_N}{2} > \frac{t}{3}$ for $N$ large, and
if we take $\epsilon < \frac{t}{6}$ we see that the prefactor
$(\frac{t_N}{2} - \epsilon)^{-d/2} \le (\frac{t}{6})^{-d/2}$ is bounded
(recall that $t$ is fixed). The sum in \eqref{eq:dede} is the same as
that in \eqref{eq:dosu}, which we had shown to be small for
$\epsilon > 0$ small, uniformly in large $N\in\N$. This completes
the proof.\qed
\end{itemize}

\appendix

\medskip
\section{Additional results for disordered systems}
\label{sec:appds}

In this appendix we prove some results for disordered systems,
stated in Section~\ref{sec:main3}.

\subsection{Proof of relations~\eqref{eq:ZN2} and~\eqref{eq:ZNdp2}}
\label{sec:polychaos}

We recall the polynomial chaos expansion used in \cite{CSZ17a,CSZ17b}.
Let us introduce the random variables
\begin{equation} \label{eq:etasigma}
	\eta_i := \frac{e^{\beta \omega_i - \lambda(\beta)}}{\sigma_\beta} \,, \qquad
	\text{where} \qquad \sigma_\beta^2 := e^{\lambda(2\beta)-2\lambda(\beta)}-1 \,,
\end{equation}
so that $(\eta_i)$ are i.i.d.\ with zero mean and unit variance
(recall \eqref{eq:assomega}).

Recall the definition \eqref{eq:pinning} of $Z_{N}^{\beta}$ and
note that we can write
\begin{equation} \label{eq:1+}
	e^{(\beta \omega_n - \lambda(\beta)) \ind_{\{X_{2n}=0\}}}
	= 1 + \sigma_\beta \, \eta_n \, \ind_{\{X_{2n}=0\}} \,.
\end{equation}
We now write the exponential in \eqref{eq:pinning} as a product and perform
an expansion, exploiting \eqref{eq:1+}. Recalling the definition \eqref{eq:un}
of $u(n)$, we obtain:
\begin{equation} \label{eq:Zchaos}
\begin{split}
	Z_{N}^{\beta} &= \E\Bigg[ \prod_{n=1}^{N-1} 
	e^{(\beta \omega_n - \lambda(\beta)) \ind_{\{X_{2n}=0\}}}
	\, \ind_{\{X_{2N}=0\}}\Bigg] \\
	&= \sum_{k=1}^N \, (\sigma_\beta)^{k-1} 
	\sum_{0 < n_1 < \ldots < n_{k-1} < n_k := N}
	u(n_1) \, u(n_2-n_1) \, \cdots
	\, u(n_k - n_{k-1})  \\
	& \qquad\qquad\qquad\qquad\qquad\qquad\qquad\qquad\qquad 
	\cdot \eta_{n_1} \, \eta_{n_2} \, \cdots \, \eta_{n_{k-1}} \,  \,.
\end{split}
\end{equation}
This formula expresses $Z_{N}^{\beta}$ as a multilinear polynomial of the random variables.
Since the monomials
for different $k$ are orthogonal
in $L^2(\bbP)$, we get \eqref{eq:ZN2}.

The proof of \eqref{eq:ZNdp2} is similar, because we can represent $\bsZ_{N}^{\beta}(x)$
in \eqref{eq:dpre} as follows:
\begin{equation} \label{eq:Zchaosdpre}
\begin{split}
	\bsZ_{N}^{\beta}(x)
	= \sum_{k=1}^N \, (\sigma_\beta)^{k-1} 
	\!\!\!\!\!\!\!\!\!\!
	\sumtwo{0 < n_1 < \ldots < n_{k-1} < n_k := N}{x_1, \ldots, x_k \in \Z^2: \
	x_k = x} \!\!\!\!\!\!\!\!\!\!
	& q_{n_1}(x_1) \, q_{n_2-n_1}(x_2 - x_1) \, 
	\cdots \, q_{n_k - n_{k-1}}(x_k - x_{k-1}) 
	\\
	& \qquad \, \cdot
	\, \eta_{n_1,x_1} \, \eta_{n_2, x_2}
	\, \cdots \, \eta_{n_{k-1}, x_{k-1}} \,.
\end{split}
\end{equation}
This completes the proof.
\qed

\subsection{Free partition function}
\label{sec:unibo}

For the pinning model, one can consider the 
\emph{free partition function}
$Z_{N}^{\beta,\mathrm{f}}$, in which the constraint $\{X_{2N}=0\}$
is removed from \eqref{eq:pinning}, and the sum is extended up to $N$:
\begin{equation} \label{eq:pinningfree}
	Z_{N}^{\beta,\mathrm{f}} 
	:= \E\Big[ e^{\sum_{n=1}^{N} (\beta \omega_n - \lambda(\beta)) \ind_{\{X_{2n}=0\}}} \Big] \,.
\end{equation}
Then we have the following analogue of Theorem~\ref{th:pinning}.
Let us set, recalling \eqref{eq:G0}-\eqref{eq:G0+},
\begin{equation} \label{eq:barG}
	\overline{G}_\theta(u) := \int_0^u G_\theta(t) \, \dd t
	= \int_0^\infty 
	\frac{ e^{(\theta - \gamma) s} \,  
	u^{s}}{\Gamma(s+1)} \, \dd s \,,
	\qquad \text{for } u \in (0,1] \,. 
\end{equation}

\begin{proposition}[Free pinning model partition function] \label{th:pinning2}
Rescale
$\beta = \beta_N$ as in \eqref{eq:sigmaresc}.
Then, for any fixed $\delta > 0$,
the following relation holds as $N \to \infty$:
\begin{equation} \label{eq:pinningUNasfree}
	\bbE[(Z_{n}^{\beta_N, \mathrm{f}})^2] = (\log N)
	\, \overline{G}_{\theta}(\tfrac{n}{N}) \, (1+o(1)) \,,
	\qquad \text{uniformly for } \ \delta N \le n \le N \,,
\end{equation}
with $\overline{G}(\cdot)$ defined in \eqref{eq:barG}.
Moreover, the following bound holds, for a
suitable $C \in (0,\infty)$:
\begin{equation} \label{eq:pinningunibofree}
	\bbE[(Z_{n}^{\beta_N, \mathrm{f}})^2]
	\le C \, (\log N) \, \overline{G}_{\theta}(\tfrac{n}{N}) \,,
	\qquad \forall 1 \le n \le N \,.
\end{equation}
Finally, since $\bbE[Z_{n}^{\beta_N, \mathrm{f}}] = 1$,
relations \eqref{eq:pinningUNasfree} and \eqref{eq:pinningunibofree}
holds also for $\bbvar[Z_{n}^{\beta_N, \mathrm{f}}]$.
\end{proposition}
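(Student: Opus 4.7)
The plan is to reduce everything to Theorem~\ref{th:pinning}, via an elementary identity that expresses the free second moment as a cumulative sum of the constrained ones. First, repeating the polynomial chaos expansion of Appendix~\ref{sec:polychaos} but \emph{without} the terminal constraint $\{X_{2N}=0\}$ in \eqref{eq:pinningfree}, we obtain
\begin{equation*}
    Z_n^{\beta,\mathrm{f}} \;=\; \sum_{k=0}^{n} \sigma_\beta^{k}
    \sum_{0<n_1<\ldots<n_k\le n} u(n_1)\,u(n_2-n_1)\cdots u(n_k-n_{k-1})\,\eta_{n_1}\cdots\eta_{n_k},
\end{equation*}
and, since monomials of different degree are $L^2(\bbP)$-orthogonal, grouping the top index $n_k=m$ and invoking \eqref{eq:ZN2} yields the key identity
\begin{equation*}
    \bbE\bigl[(Z_n^{\beta,\mathrm{f}})^2\bigr] \;=\; 1+\sigma_\beta^2\,\sum_{m=1}^{n}\bbE\bigl[(Z_m^{\beta})^2\bigr].
\end{equation*}

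With $\beta=\beta_N$ chosen as in \eqref{eq:sigmaresc}, Corollary~\ref{cor:sigmaresc} gives $\sigma_{\beta_N}^2=\pi/\log N\,(1+o(1))$, so plugging the sharp asymptotic \eqref{eq:pinningUNas} into the identity and turning the resulting Riemann sum $\frac{1}{N}\sum_{m=1}^{n}G_\theta(m/N)$ into an integral produces, uniformly for $\delta N\le n\le N$,
\begin{equation*}
    \sigma_{\beta_N}^2\sum_{m=1}^n\bbE\bigl[(Z_m^{\beta_N})^2\bigr]
    \;=\; (\log N)\,\frac{1}{N}\sum_{m=1}^n G_\theta(\tfrac{m}{N})\,(1+o(1))
    \;=\; (\log N)\,\overline{G}_\theta(\tfrac{n}{N})\,(1+o(1)).
\end{equation*}
To justify the Riemann sum approximation uniformly, I would split the sum at $m=\eta N$ for small $\eta>0$: the tail $m>\eta N$ is handled by \eqref{eq:pinningUNas} together with continuity of $G_\theta$ on $[\eta,1]$, while the head $m\le\eta N$ is bounded via the uniform estimate \eqref{eq:pinningunibo} by $C(\log N)\,\overline{G}_\theta(\eta)$, which is $o\bigl((\log N)\,\overline{G}_\theta(\delta)\bigr)$ when $\eta\downarrow 0$ because Proposition~\ref{prop:asGthetac} implies $\overline{G}_\theta(\eta)\to 0$. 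The additive $1$ is negligible since $(\log N)\,\overline{G}_\theta(n/N)\to\infty$ whenever $n/N\ge\delta$. This proves \eqref{eq:pinningUNasfree}.

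For the uniform bound \eqref{eq:pinningunibofree}, apply \eqref{eq:pinningunibo} directly in the key identity to get
\begin{equation*}
    \bbE\bigl[(Z_n^{\beta_N,\mathrm{f}})^2\bigr]
    \;\le\; 1+C\,(\log N)\,\frac{1}{N}\sum_{m=1}^n G_\theta(\tfrac{m}{N})
    \;\le\; 1+C'\,(\log N)\,\overline{G}_\theta(\tfrac{n}{N}),
\end{equation*}
and then absorb the $1$ into the constant using the fact that $(\log N)\,\overline{G}_\theta(n/N)\ge c>0$ uniformly for $1\le n\le N$: indeed Proposition~\ref{prop:asGthetac} gives $\overline{G}_\theta(t)\sim 1/\log(1/t)$ as $t\downarrow 0$, so even at $n=1$ the quantity $(\log N)\,\overline{G}_\theta(1/N)$ stays bounded below. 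Finally, since $\bbE[Z_n^{\beta_N,\mathrm{f}}]=1$ (each factor $e^{(\beta\omega_n-\lambda(\beta))\ind_{\{X_{2n}=0\}}}$ has $\bbE$-mean one, by definition of $\lambda(\beta)$), the variance equals $\bbE[(Z_n^{\beta_N,\mathrm{f}})^2]-1$, and the same estimates transfer to $\bbvar[Z_n^{\beta_N,\mathrm{f}}]$ without change.

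The only delicate point, and the one I would expect to require most care, is the uniform Riemann-sum approximation near the singularity of $G_\theta$ at $0$: one needs the tail estimate \eqref{eq:pinningunibo} precisely because Theorem~\ref{th:pinning}'s asymptotic \eqref{eq:pinningUNas} is only available for $m\ge\delta N$, and the relative error must be shown to vanish uniformly in $n\in[\delta N,N]$. This is handled by the split argument above together with the elementary continuity of the Dickman-type integral $\overline{G}_\theta$ at $0$.
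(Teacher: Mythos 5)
Your proof is correct and takes essentially the same approach as the paper: your key identity $\bbE[(Z_n^{\beta,\mathrm{f}})^2] = 1 + \sigma_\beta^2\sum_{m=1}^n\bbE[(Z_m^\beta)^2]$ is, via \eqref{eq:repZN}, precisely the paper's identity $1+\sum_{\ell=1}^n U_{N,\lambda}(\ell)$, and you then apply Theorem~\ref{th:pinning}, which is just the specialization of the Theorem~\ref{th:WR} estimates that the paper invokes directly. Your explicit handling of the Riemann-sum approximation near the integrable singularity of $G_\theta$ at $0$ spells out the detail the paper leaves implicit behind ``it then suffices to apply,'' and is the correct way to fill it in.
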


\begin{proof}
Arguing as in \S\ref{sec:polychaos}, one can write a decomposition 
for $Z_{n}^{\beta, \mathrm{f}}$ similar to \eqref{eq:Zchaos}.
As a consequence,
the second moment of $Z_{n}^{\beta, \mathrm{f}}$ 
is given by an expression similar to \eqref{eq:ZN2}, namely
\begin{equation} \label{eq:ZN2free}
	\bbE[(Z_{n}^{\beta, \mathrm{f}})^2] = 1 + \sum_{k \ge 1} \, (\sigma_\beta^2)^{k} 
	\sum_{0 < n_1 < \ldots < n_{k} \le n}
	u(n_1)^2 \, u(n_2-n_1)^2 \, \cdots \, u(n_{k} - n_{k-1})^2 \,,
\end{equation}
which yields an analogue of relation
\eqref{eq:repZN}:
\begin{equation*}
\begin{split}
	\bbE[(Z_{n}^{\beta, \mathrm{f}})^2] & = 
	1 + \sum_{k \ge 1} \big( \sigma_\beta^2 \, R_N \big)^{k} \,
	\P(\tau_{k}^{(N)} \le n)
	= 1 + \sum_{\ell=1}^n 
	\sum_{k \ge 1} \big( \sigma_\beta^2 \, R_N \big)^{k} \,
	\P(\tau_{k}^{(N)} = \ell) \\
	& = 1 \, + \, \sum_{\ell = 1}^n U_{N,\lambda}(\ell) \,, \qquad
	\text{where} \qquad \lambda := \sigma_\beta^2 \, R_N  \,.
\end{split}	
\end{equation*}
It then suffices to apply \eqref{eq:UNas} and \eqref{eq:unibo} to get
\eqref{eq:pinningUNasfree}
and \eqref{eq:pinningunibofree}. 
\end{proof}

\smallskip

Also for the directed polymer in random environment we can consider the 
\emph{free (or point-to-plane) partition function}
$\bsZ_{N}^{\beta, \mathrm{f}}$, in which the constraint $\{S_{N}=x\}$
is removed from \eqref{eq:dpre}, and the sum is extended up to $N$:
\begin{equation} \label{eq:dprefree}
	\bsZ_{N}^{\beta,\mathrm{f}}
	:= \E\Big[ e^{\sum_{n=1}^{N} (\beta \omega_{n,S_n} - \lambda(\beta)) } \Big] 
	= \E\Big[ e^{\sum_{n=1}^{N} \sum_{z\in\Z^2} 
	(\beta \omega_{n,z} - \lambda(\beta)) \ind_{\{S_n = z\}} }
	\Big] \,.
\end{equation}
The second moment of $\bsZ_{N}^{\beta, \mathrm{f}}$
turns out to be identical to that of $Z_{N}^{\beta, \mathrm{f}}$ (pinning model).

\begin{proposition}[Free directed polymer partition function] \label{th:dpre2}
Rescale $\beta = \beta_N$ as in \eqref{eq:sigmarescbis}.
Then relations \eqref{eq:pinningUNasfree} and \eqref{eq:pinningunibofree}
hold verbatim for the free partition function $\bsZ_{n}^{\beta_N, \mathrm{f}}$ of
the directed polymer in random environment, defined in \eqref{eq:dprefree}.
\end{proposition}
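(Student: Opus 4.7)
The plan is to reduce Proposition~\ref{th:dpre2} to Proposition~\ref{th:pinning2} by showing that the second moment of $\bsZ_{n}^{\beta,\mathrm{f}}$ coincides exactly with the second moment of the pinning free partition function $Z_{n}^{\beta,\mathrm{f}}$, so that the asymptotics are inherited. Since the rescaling \eqref{eq:sigmarescbis} of $\beta$ involves the \emph{same} quantity $R_N$ defined in \eqref{eq:Rpinning} as the pinning rescaling \eqref{eq:sigmaresc}, this identity of second moments immediately transfers the two claimed estimates.

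First, I would derive a polynomial chaos expansion for the free partition function by applying the elementary identity $e^{(\beta\omega_{n,z}-\lambda(\beta))\ind_{\{S_n=z\}}} = 1 + \sigma_\beta\,\eta_{n,z}\,\ind_{\{S_n=z\}}$ in \eqref{eq:dprefree} and expanding the resulting product, exactly as in \S\ref{sec:polychaos} but \emph{without} the endpoint constraint. With the convention $n_0 := 0$, $x_0 := 0$, this yields
\begin{equation*}
\bsZ_{n}^{\beta,\mathrm{f}} \,=\, 1 + \sum_{k\ge 1}(\sigma_\beta)^{k} \sumtwo{0<n_1<\cdots<n_k\le n}{x_1,\ldots,x_k\in\Z^2}\; \prod_{i=1}^k q_{n_i-n_{i-1}}(x_i-x_{i-1})\,\eta_{n_i,x_i}.
\end{equation*}
Second, since the monomials indexed by distinct sets $\{(n_i,x_i)\}$ are orthogonal in $L^2(\bbP)$ (the $\eta_{n,x}$ are i.i.d.\ with zero mean and unit variance, and the strict ordering of the $n_i$ forces a perfect pairing of indices between the two factors), the second moment reduces to the sum of the squared terms:
\begin{equation*}
\bbE\big[(\bsZ_{n}^{\beta,\mathrm{f}})^2\big] \,=\, 1 + \sum_{k\ge 1}(\sigma_\beta^2)^{k} \sumtwo{0<n_1<\cdots<n_k\le n}{x_1,\ldots,x_k\in\Z^2}\; \prod_{i=1}^k q_{n_i-n_{i-1}}(x_i-x_{i-1})^2.
\end{equation*}

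The decisive simplification is now the spatial summation. Changing variables $y_i := x_i - x_{i-1} \in \Z^2$ decouples the increments, and invoking \eqref{eq:P2n0}, which gives $\sum_{y\in\Z^2} q_m(y)^2 = u(m)^2$, I obtain
\begin{equation*}
\sum_{x_1,\ldots,x_k\in\Z^2}\prod_{i=1}^k q_{n_i-n_{i-1}}(x_i-x_{i-1})^2 \,=\, \prod_{i=1}^k u(n_i-n_{i-1})^2.
\end{equation*}
Comparing with \eqref{eq:ZN2free} yields the exact identity $\bbE[(\bsZ_{n}^{\beta,\mathrm{f}})^2] = \bbE[(Z_{n}^{\beta,\mathrm{f}})^2]$. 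Combined with the coincidence of the rescaling conditions \eqref{eq:sigmaresc} and \eqref{eq:sigmarescbis}, Proposition~\ref{th:pinning2} applies directly and delivers \eqref{eq:pinningUNasfree} and \eqref{eq:pinningunibofree} for $\bsZ_{n}^{\beta_N,\mathrm{f}}$.

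Honestly, there is no genuine obstacle: the entire content of the argument is the algebraic identity $\sum_x q_m(x)^2 = u(m)^2$, which exists precisely because in dimension two the simple random walk decomposes into two independent one-dimensional random walks along the diagonals (this is what makes $d=2$ the marginal dimension for the directed polymer, in parallel with the $1/2$ tail exponent for pinning on $\Z$). The only mild check is that $\bbE[\bsZ_{n}^{\beta,\mathrm{f}}]=1$, which is immediate from the definition \eqref{eq:dprefree} and the normalisation $\bbE[e^{\beta\omega_{n,z}-\lambda(\beta)}]=1$; this secures the corresponding variance statement as well.
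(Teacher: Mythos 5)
Your proof is correct and takes essentially the same approach as the paper: expand $\bsZ_n^{\beta,\mathrm{f}}$ in polynomial chaos, use orthogonality of the $\eta_{n,x}$ to obtain the second moment as in \eqref{eq:ZNdp2free}, and sum out the spatial variables via $\sum_{x\in\Z^2} q_m(x)^2 = u(m)^2$ to reduce to the pinning expression \eqref{eq:ZN2free}. The paper's proof of Proposition~\ref{th:dpre2} is exactly this reduction.
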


\begin{proof}
Arguing as in \S\ref{sec:polychaos}, one can write a decomposition 
for $\bsZ_{n}^{\beta, \mathrm{f}}$ similar to \eqref{eq:Zchaosdpre}.
Then the second moment of $\bsZ_{n}^{\beta, \mathrm{f}}$ can be represented as follows:
\begin{equation} \label{eq:ZNdp2free}
\begin{split}
	\bbE\big[(\bsZ_{n}^{\beta, \mathrm{f}})^2 \big] =
	1 + \sum_{k \ge 1} \, (\sigma_\beta^2)^{k} 
	\sumtwo{0 < n_1 < \ldots < n_{k} \le N}{x_1, \ldots, x_k \in \Z^2}
	& q_{n_1}(x_1)^2 \, q_{n_2-n_1}(x_2 - x_1)^2 \, \cdot \\
	& \qquad \cdots \, q_{n_k - n_{k-1}}(x_k - x_{k-1})^2 \,.
\end{split}
\end{equation}
Since $\sum_{x\in\Z^2} q_n(x)^2 = u(n)^2$, see \eqref{eq:P2n0},
we can sum over $x_k, x_{k-1}, \ldots, x_1$ in \eqref{eq:ZNdp2free}
to obtain precisely the same expression as in \eqref{eq:ZN2free}.
In other words, \emph{the free partition functions of the pinning and directed
polymer models have the same second moment}:
\begin{equation*}
	\bbE\big[(\bsZ_{n}^{\beta, \mathrm{f}})^2 \big] =
	\bbE\big[(Z_{n}^{\beta, \mathrm{f}})^2 \big]  \,.
\end{equation*}
This completes the proof.
\end{proof}

\subsection{Proof of Proposition~\ref{prop:EM}}
\label{sec:EM}
Let $T := \min\{m\in\N: \ S_{m} = 0\}$ denote
the first return time to the origin of the simple symmetric random walk on $\Z^2$. 
Let $(\xi_i)_{i\in\N}$ be i.i.d.\
random variables distributed as $T/2$. We define
\begin{equation*}
	L_N := \sum_{n=1}^N \ind_{\{S_{2n}=0\}} 
	= \max\big\{k\in\N_0: \ \xi_1 + \ldots + \xi_k \le N \big\} \,,
\end{equation*}
so that, recalling \eqref{eq:P2n0} and the definition
\eqref{eq:EM} of $R_N$, we can write
\begin{equation*}
	R_N = \sum_{n=1}^N \P(S_{2n}=0)
	= \E[L_N] = \sum_{k=1}^N \P(L_N \ge k)
	= \sum_{k=1}^N \P(\xi_1 + \ldots + \xi_k \le N) \,.
\end{equation*}
Let $(\xi_i^{(N)})_{i\in\N}$ be
i.i.d.\ random variables with the law of
$\xi_1$ conditionally on $\{\xi_1 \le N\}$. Then
we have the following key representation of $R_N$:
\begin{equation}\label{eq:formi}
\begin{split}
	R_N & 
	= \sum_{k=1}^N \P(\xi_1 \le N)^k \, \P(\xi_1^{(N)} + \ldots + \xi_k^{(N)} \le N) \\
	& = \sum_{k=1}^N \P(\xi_1 \le N)^k 
	- \sum_{k=1}^N \P(\xi_1 \le N)^k
	\, \P(\xi_1^{(N)} + \ldots + \xi_k^{(N)} > N) \,.
\end{split}
\end{equation}
We are going to show that the first sum gives the
leading contribution to the right hand side of \eqref{eq:EM},
while the second sum is negligible.

\smallskip

We need estimates on the law of $\xi_1$.
By Corollary~1.2 and Remark~4 in~\cite{U11}, we have
\begin{equation}\label{returndist}
\begin{aligned}
\P(\xi_1=k) = \P(T=2k) & = \frac{\pi}{k} 
\bigg( \frac{1}{(\log 16k)^2} - \frac{2\gamma}{(\log 16k)^3}  
+ O\bigg(\frac{1}{(\log 16k)^4}\bigg) \bigg)  \\
& = \frac{\pi}{k(\log k)^2} - \frac{2\pi(\gamma + \log 16)}{k(\log k)^3}
+ O\bigg(\frac{1}{(\log k)^4}\bigg) \,, \\
\P(\xi_1\geq k) = \P(T\geq 2k) & 
= \frac{\pi}{\log k} 
- \frac{\pi(\gamma+\log 16)}{(\log k)^2} + O\bigg(\frac{1}{(\log k)^3}\bigg) \,,
\end{aligned}
\end{equation}
as $k\to\infty$,
where $\gamma$ is the Euler-Mascheroni constant.
Then, as $N\to\infty$, we can write
\begin{equation*}\label{eq:precest}
\begin{split}
	\frac{\P(\xi_1 \le N)}{\P(\xi_1 > N)} & = \frac{1 - \frac{\pi}{\log N} + O(\frac{1}{(\log N)^2})}
	{\frac{\pi}{\log N} \big(1 - \frac{(\gamma + \log 16)}{(\log N)} 
	+ O\big(\frac{1}{(\log N)^2}\big) \big)} =
	\frac{\log N}{\pi} + 
	\bigg(\frac{\gamma + \log 16}{\pi}-1\bigg) + o(1) \,, \\
	\P(\xi_1 \le N)^N & = \big( 1 - \tfrac{\pi}{\log N} + O(\tfrac{1}{(\log N)^2}) \big)^N =
	e^{-\frac{\pi N}{\log N} (1+o(1))} = o\bigg(\frac{1}{\log N}\bigg) \,.
\end{split}
\end{equation*}
From this we deduce the asymptotic behavior
of the first sum in the last line of \eqref{eq:formi}:
\begin{equation*}
	\sum_{k=1}^N \P(\xi_1 \le N)^k = 
	\frac{\P(\xi_1 \le N)}{\P(\xi_1 > N)} \big(1-\P(\xi_1\le N)^N\big)
	= \frac{\log N}{\pi} + 
	\bigg(\frac{\gamma + \log 16}{\pi}-1\bigg) + o(1) \,,
\end{equation*}
which matches with the right hand side of \eqref{eq:EM}.
It remains to show that the second sum
in the last line of \eqref{eq:formi} is asymptotically vanishing, i.e.\
\begin{equation}\label{eq:toshow}
	\lim_{N\to\infty} \rho_N = 0 \,, \qquad
	\text{where} \qquad \rho_N :=
	\sum_{k=1}^N \P(\xi_1 \le N)^k \, \P(\xi_1^{(N)} + \ldots + \xi_k^{(N)} > N) \,.
\end{equation}

\smallskip

Denoting by $C_1, C_2$ suitable absolute constants,
we have by relation \eqref{returndist}
\begin{equation} \label{eq:T1N}
\begin{split}
	\E\Big[\xi_1^{(N)} \Big] 
	& = \frac{1}{\P(\xi_1 \le N)} \sum_{\ell=1}^{N} \ell \, \P(\xi_1 = \ell)
	\le C_1 \sum_{\ell=1}^{N} \frac{1}{(\log \ell)^2}
	\le C_2 \frac{N}{(\log N)^2} \,,
\end{split}
\end{equation}
hence by Markov's inequality
\begin{equation*}
	\P\big( \xi_1^{(N)} + \ldots + \xi_k^{(N)} > N \big)
	\le C_2 \frac{k}{(\log N)^2} \, .
\end{equation*}
Since $\P(\xi_1 \le N) \le e^{-\frac{1}{\log N}}$ for large $N$,
by \eqref{returndist}, we can control the tail of $\rho_N$ in \eqref{eq:toshow} by
\begin{equation*}
	\rho_N^{> A} :=
	\sum_{k > A \log N} \P(\xi_1 \le N)^k \P(\xi_1^{(N)} + \ldots + \xi_k^{(N)} > N)
	\le C_2 \sum_{k > A \log N} e^{-\frac{k}{\log N}} 
	\frac{k}{(\log N)^2} \,.
\end{equation*}
By a Riemann sum approximation, the last sum converges to
$\int_A^\infty x \, e^{-x} \, \dd x = (1+A) e^{-A}$ as $N\to\infty$.
In particular, for every fixed $A \in (0,\infty)$, we have shown that
\begin{equation} \label{eq:RNA>}
	\limsup_{N\to\infty} \rho_N^{> A} \le (1+A)e^{-A} \,.
\end{equation}

Next we focus on the contribution $\rho_N^{\le A}$ of the terms
with $k \le A \log N$, i.e.\
\begin{equation} \label{eq:rec1}
\begin{split}
	\rho_N^{\le A} & :=
	\sum_{k \le A \log N} \P(\xi_1 \le N)^k \, \P(\xi_1^{(N)} + \ldots + \xi_k^{(N)} > N) \\
	& \le (A \log N) \, \P(\xi_1^{(N)} + \ldots + \xi_{A \log N}^{(N)} > N) \,.
\end{split}
\end{equation}
We fix $\epsilon \in (0,\frac{1}{2})$ and write
\begin{equation*}
	\xi_1^{(N)} + \ldots + \xi_k^{(N)} =
	\sum_{i=1}^k \xi_i^{(N)} \ind_{\{\xi_i^{(N)} \le \epsilon^2 N\}}
	+ \sum_{i=1}^k \xi_i^{(N)} \ind_{\{\xi_i^{(N)} > \epsilon^2 N\}}
	=: U_{-} + U_{+} \,,
\end{equation*}
so that we can decompose
\begin{equation} \label{eq:rec2}
	\P(\xi_1^{(N)} + \ldots + \xi_k^{(N)} > N) \le
	\P( U_{-} > \epsilon N ) +
	\P( U_{+} > (1-\epsilon) N ) \,,
\end{equation}
and we estimate separately each term. In analogy with \eqref{eq:T1N} we have
\begin{equation*}
\begin{split}
	\E\Big[U_{-} \Big] = k
	\E\Big[\xi_1^{(N)} \ind_{\{\xi_1^{(N)} \le \epsilon^2 N\}}\Big] 
	& = k \sum_{\ell=1}^{\epsilon^2 N} \frac{\ell \, \P(\xi_1 = \ell)}{\P(\xi_1 \le N)}
	\le k \sum_{\ell=1}^{\epsilon^2 N} \frac{C_1}{(\log \ell)^2}
	\le C_2 \frac{\epsilon^2 N k}{(\log (\epsilon^2 N))^2} \,,
\end{split}
\end{equation*}
hence by Markov's inequality
\begin{equation} \label{eq:rec3}
	\P\big( U_{-} > \epsilon N \big)
	\le C_2 \frac{\epsilon k}{(\log (\epsilon^2 N))^2} \,.
\end{equation}
Next we observe that 
\begin{equation*}
	\{ U_{+} > (1-\epsilon) N \} \subseteq
	\bigg( \bigcup_{i=1}^k \{\xi_i^{(N)} > (1-\epsilon) N\} \bigg) \cup
	\bigg( \bigcup_{ 1 \le i < j \le k} \{\xi_i^{(N)} > \epsilon^2 N,
	\xi_j^{(N)} > \epsilon^2 N\} \bigg) \,,
\end{equation*}
because either $\xi_i^{(N)} > (1-\epsilon) N$ for a single $i$, or necessarily
$\xi_i^{(N)} > \epsilon^2 N$ and $\xi_j^{(N)} > \epsilon^2 N$ for at least two
distinct $i \ne j$ (otherwise $U_+$ vanishes).
Since for fixed $c \in (0,1)$
\begin{equation*}
	\P(\xi_1^{(N)} > c N) \le C_1 \sum_{\ell = cN}^N \frac{1}{\ell \, (\log \ell)^2}
	\le C_1 \frac{1}{(\log cN)^2} \sum_{\ell = cN}^N \frac{1}{\ell}
	\le C_1 \frac{\log \frac{1}{c}}{(\log cN)^2} \,,
\end{equation*}
it follows that
\begin{equation*}
	\P(U_+ > (1-\epsilon)N)  \le k \,
	C_1 \frac{\log \frac{1}{1-\epsilon}}{(\log ((1-\epsilon)N))^2}
	+ \frac{k(k-1)}{2} \,
	\bigg[ C_1 \frac{\log \frac{1}{\epsilon^2}}{(\log (\epsilon^2 N))^2} \bigg]^2 \,.
\end{equation*}
Recalling \eqref{eq:rec1}-\eqref{eq:rec2}-\eqref{eq:rec3} and plugging $k = A \log N$, we get
\begin{equation*}
	\limsup_{N\to\infty} \rho_N^{\le A} \le A^2 \big( C_2 \, \epsilon +
	C_1 \log \tfrac{1}{1-\epsilon} \big) \,.
\end{equation*}
By \eqref{eq:RNA>}, since $\rho_N = \rho_N^{\le A} + \rho_N^{> A}$,
we obtain \eqref{eq:toshow} by letting $\epsilon \to 0$
and then $A \to \infty$.\qed

\subsection{Explicit asymptotics in terms of $\beta$}
\label{sec:explbeta}

Relation \eqref{eq:sigmaresc} (equivalently \eqref{eq:sigmarescbis})
and relation \eqref{eq:sigmaresc2} can be rewritten more explicitly
in terms of $\beta_N$. To this purpose, we need
the \emph{cumulants} $\kappa_3$, $\kappa_4$ of the distribution of $\omega_i$
(recall \eqref{eq:assomega}), defined by
\begin{equation} \label{eq:cumulants}
	\lambda(\beta) = \frac{1}{2} \beta^2 + \frac{\kappa_3}{3!} \beta^3
	+ \frac{\kappa_4}{4!} \beta^4 + O(\beta^5) \qquad \text{as } \beta \to 0 \,.
\end{equation}
By direct computation
$\sigma_\beta^2 = 
	\beta^2 + \kappa_3 \, \beta^3 + 
	\big( \tfrac{1}{2} + \tfrac{7}{12} \kappa_4\big) \beta^4 + O(\beta^5)$
as $\beta \to 0$, hence
\begin{equation} \label{eq:betaepsilon}
	\sigma_\beta^2 = \epsilon \quad \ \Longrightarrow \quad \
	\beta^2 = \epsilon - \kappa_3 \, \epsilon^{3/2} + 
	(\tfrac{3}{2} \kappa_3^2 - \tfrac{7}{12}\kappa_4 - \tfrac{1}{2}) \, \epsilon^2
	+ o(\epsilon^2) \qquad \text{as } \epsilon \to 0 \,.
\end{equation}
As a consequence, 
we can rewrite \eqref{eq:sigmaresc2} as follows,
with $\alpha := \gamma + \log 16 - \pi$:
\begin{equation*}
	\beta_N^2 = \frac{\pi}{\log N} - \frac{\kappa_3 \, \pi^{3/2}}{(\log N)^{3/2}}
	+ \frac{\pi(\theta - \alpha) 
	+ \pi^2 (\frac{3}{2} \kappa_3^2 - \frac{1}{2} - \frac{7}{12}\kappa_4)}
	{(\log N)^2} \big(1+o(1)\big)  \,.
\end{equation*}

\medskip

\section{On the Dickman subordinator}
\label{sec:Dickman}

Theorem~\ref{th:scalingY}
on the density of the Dickman subordinator can be deduced
from general results about \emph{self-decomposable L\'evy processes},
see \cite[\S53]{cf:Sato}.
\begin{itemize}
\item Let us first derive \eqref{eq:scalingf} for $t \in (0,1]$.
The law of $Y_s$
satisfies the assumptions of \cite[Lemma~53.2]{cf:Sato}
with $n=1$, $a_1 = 1$ and $c = s$, which yields
$f_s(t) = K t^{s-1}$ for $t \in (0,1]$.
To show that $K = e^{-\gamma s}/\Gamma(s)$, as in \eqref{eq:scalingf},
one can apply \cite[Theorem~53.6]{cf:Sato} which gives 
$f_s(t) = (1+o(1)) \kappa \, t^{s-1} / \Gamma(s)$ as $t \downarrow 0$,
with $\kappa = \exp\{ s (\int_0^1 \frac{e^{-x}-1}{x} \dd x + \int_1^\infty 
\frac{e^{-x}}{x} \, \dd x )\}$. The identification $\kappa = \exp\{-\gamma s\}$
follows by \cite[Entry 8.367 (12), page 906]{GR}.

\item We then deduce \eqref{eq:scalingf} for $t \in (1,\infty)$.
We can apply \cite[Theorem~51.1]{cf:Sato}, which reads as follows
(where $\nu(\dd t) = \frac{s}{t} \, \ind_{(0,1)}(t) \, \dd t$, $\gamma_0 = 0$
and $f_s(t)$ is the density of $Y_s$):
\begin{equation*}
	\int_0^t y \, f_s(y) \, \dd y = \int_0^t \left(
	\int_0^{t-y} f_s(u) \, \dd u \right) y \, \frac{s}{y} \,
	\ind_{(0,1)}(y) \, \dd y \,.
\end{equation*}
Differentiating with respect to $t$, for $t > 1$, we get
$t f_s(t) = s \int_0^1 f_s(t-y) \, \dd y$, which already shows that
$f_s(t)$ can be deduced from $\{f_s(u): \, u \in (t-1,t)\}$.
To obtain \eqref{eq:scalingf},
we further differentiate
this relation (note
that $f_s(\cdot) \in C^1$ on $(1,\infty)$, by \cite[Lemma~53.2]{cf:Sato})
to get $f_s(t) + t f_s'(t) = s \, (f_s(t) - f_s(t-1))$, which can be rewritten
as $(t^{1-s} f_s(t))' = -s \, t^{-s} \, f_s(t-1)$. 
Integrating on $(0,t)$, 
since $t^{1-s} f_s(t) \to K = e^{-\gamma s}/\Gamma(s)$ as $t \downarrow 0$,
we obtain $t^{1-s} f_s(t) - K = s \int_0^t \frac{f_s(u-1)}{u^s} \, \dd u$,
which coincides with the second line of \eqref{eq:scalingf}
(note that $f_s(t) \equiv 0$ for $t < 0$).
\end{itemize}
This completes the proof of \eqref{eq:scalingf}.\footnote{This proof was
kindly provided to us by Thomas Simon.}

\smallskip

We now present an alternative proof of Theorem~\ref{th:scalingY},
which exploits a key \emph{scale invariance property} of the Dickman subordinator $Y$.
Let $M_s$ denote the maximal jump  up to time $s$:
\begin{equation} \label{eq:MDeltaY}
	M_s := \max_{u \in (0,s]} \Delta Y_u \,, \qquad
	\text{where} \qquad \Delta Y_u := Y_u - Y_{u-} = Y_u - \lim_{\epsilon \downarrow 0}
	Y_{u-\epsilon} \,.
\end{equation}
We first prove the following result.

\begin{proposition}[Scale-invariance]
\label{th:scale}
Fix $s \in (0,\infty)$, $t \in (0,1)$. Conditional on all jumps of $Y$
up to time $s$ being smaller than $t$, the random variable $Y_s / t$
has the same law as $Y_s$, i.e.
\begin{equation} \label{eq:scale}
	\P\bigg( \frac{Y_s}{t} \in \cdot \,\bigg|\, M_s < t\bigg) = \P(Y_s \in \cdot) \,.
\end{equation}
\end{proposition}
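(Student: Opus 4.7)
The plan is to exploit the Poisson point process description of the jumps of $Y$ together with the scale-invariance of the measure $\tfrac{1}{r}\,\dd r$ on $(0,\infty)$, which is exactly the feature of the L\'evy measure $\nu$ that makes the proposition work.

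First I would recall that, by the L\'evy--It\^o decomposition, the jump set $\Xi_s := \{\Delta Y_u : u \in (0,s],\ \Delta Y_u > 0\}$ is a Poisson point process on $(0,1)$ with intensity $s\,\nu(\dd r) = \frac{s}{r}\,\ind_{(0,1)}(r)\,\dd r$, and one has the representation $Y_s = \sum_{r \in \Xi_s} r$. Since by \eqref{eq:MDeltaY} the event $\{M_s < t\}$ coincides with $\{\Xi_s \cap [t,1) = \emptyset\}$, the independence of a PPP on disjoint Borel sets yields that, conditionally on $\{M_s < t\}$, the restriction $\Xi_s \cap (0,t)$ is still a PPP on $(0,t)$ with intensity $\frac{s}{r}\,\dd r$.

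Then I would apply the mapping theorem with the dilation $r \mapsto r/t$. The change of variables $r = t r'$ pushes the intensity $\frac{s}{r}\,\dd r$ on $(0,t)$ forward to $\frac{s}{t r'}\cdot t\,\dd r' = \frac{s}{r'}\,\dd r'$ on $(0,1)$, which is exactly the intensity of $\Xi_s$ under the \emph{unconditional} law. Hence the rescaled point process $t^{-1}(\Xi_s \cap (0,t))$ has the same distribution as $\Xi_s$; and since on $\{M_s<t\}$ we have $Y_s/t = \sum_{r \in \Xi_s \cap (0,t)} r/t = \sum_{r' \in t^{-1}(\Xi_s \cap (0,t))} r'$, the identity \eqref{eq:scale} follows.

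As a self-contained cross-check one can instead argue at the level of Laplace transforms. Splitting $\Xi_s$ into its restrictions to $(0,t)$ and $[t,1)$ and using Campbell's formula gives $\P(M_s < t) = \exp\bigl(-s\int_t^1 \tfrac{\dd r}{r}\bigr) = t^s$ and
\[
	\E\bigl[e^{-\lambda Y_s}\,\ind_{\{M_s<t\}}\bigr]
	= t^s\,\exp\!\biggl( s\!\int_0^t (e^{-\lambda r}-1)\,\tfrac{\dd r}{r} \biggr);
\]
the substitution $u = r/t$ then shows that $\E[e^{-\lambda Y_s/t}\mid M_s<t] = \exp\!\bigl( s\int_0^1 (e^{-\lambda u}-1)\tfrac{\dd u}{u}\bigr)$, which matches the Laplace transform of $Y_s$ read off from \eqref{eq:LapY}. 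There is no real obstacle: the content of the proposition is precisely the dilation-invariance of $\tfrac{\dd r}{r}$, transported from the L\'evy measure to the law of $Y_s$ via the PPP picture.
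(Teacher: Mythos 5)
Your proof is correct and follows essentially the same route as the paper's: both use the Poisson point process representation of the jumps, exploit the independence of the PPP on disjoint regions to remove the conditioning on $\{M_s<t\}$, and then invoke the dilation-invariance of the intensity $\frac{\dd r}{r}$ via the mapping $r\mapsto r/t$. The only cosmetic difference is that you work with the projected jump-size PPP on $(0,1)$ with intensity $s\,\nu(\dd r)$ rather than the space-time PPP on $[0,s]\times(0,1)$; the Laplace-transform cross-check is a welcome addition but not needed.
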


\begin{proof}

We use the standard representation of the L\'evy process $Y = (Y_s)_{s\in[0,\infty)}$ 
in terms of a Poisson Point Process (PPP).
Let $\Pi$ be a PPP on $[0,\infty) \times (0,1)$ with intensity measure
\begin{equation} \label{eq:muu}
	\mu(\dd x, \dd y) := \mathrm{Leb}(\dd x) \otimes \nu(\dd y)
	= \dd x \otimes \frac{\ind_{(0,1)}(y) }{y} \, \dd y \,.
\end{equation}
We recall that $\Pi$ is a random countable subset
of $[0,\infty) \times (0,1)$, whose points we denote by $(s_i, t_i)$.
Let us define
\begin{equation} \label{eq:YPi0}
	\Pi^{(s, t)} := \Pi \cap ([0, s] \times (0,  t)) \,, \qquad
	Y^{(t)}_s := \sum_{(s_i,t_i) \in \Pi^{(s,t)}} t_i \,.
\end{equation}
Then we can represent our L\'evy process $Y_s$ in terms of $\Pi$ as follows:
\begin{equation}\label{eq:YPi}
	Y_s \,\overset{d}{=}\, Y^{(1)}_s \,.
\end{equation}
Let us identify $Y_s$ with $Y^{(1)}_s$. Note that
$\Delta Y_s = t \ne 0$ if and only if $(s,t) \in \Pi$, see \eqref{eq:MDeltaY}.

On the event
$\{M_s < t\} = \{\Pi\cap ([0, s] \times [t,1)) = \emptyset\}$
we have $Y_s^{(1)} = Y^{(t)}_s$, hence
\begin{equation*}
	\P\bigg( \frac{Y_s}{t} \in \cdot \,\bigg|\, M_s < t\bigg)
	= \P\bigg( \frac{Y^{(t)}_s}{t} \in \cdot \,\bigg|\, \Pi\cap ([0, s] \times [t,1)) 
	= \emptyset\bigg) = \P\bigg( \frac{Y^{(t)}_s}{t} \in \cdot \bigg) \,,
\end{equation*}
because 
$Y^{(t)}_s$ is a function of $\Pi^{(s, t)}$,
which is independent of $\Pi\cap ([0, s] \times [t,1))$,
by definition of PPP.
To prove our goal \eqref{eq:scale}, it remains to show that
\begin{equation*}
	\P\bigg( \frac{Y^{(t)}_s}{t} \in \cdot \bigg) 
	= \P\big( Y^{(1)}_s \in \cdot \big) \,.
\end{equation*}
By \eqref{eq:YPi0}, it suffices to prove the following property:
if we denote by $\phi_t: \R^2 \to \R^2$ the map $(x,y) \mapsto (x, \frac{1}{t}y)$,
then the random set
\emph{$\phi_t(\Pi^{(s,t)})$ has the same law as $\Pi^{(s,1)}$}.

\smallskip

Note that $\Pi^{(s, t)}$
is a PPP with intensity measure $\mu^{(s,t)}$ given by the
original intensity measure $\mu$
restricted on $[0, s] \times (0,  t)$ (see \eqref{eq:muu}).
We also observe that the random set
$\phi_t(\Pi^{(s, t)})$ is a PPP with intensity measure given by 
$\mu^{(s,t)} \circ \phi_t^{-1}$, i.e.\
the image law of $\mu^{(s,t)}$ under $\phi_t$.
The proof is completed by noting that $\phi_t$
sends $\mu^{(s,t)}$ to $\mu^{(s,1)}$,
because the map $y\mapsto y /  t$ sends the
measure $\frac{1}{y} \, \ind_{(0, t)}(y) \, \dd y$ to the measure
$\frac{1}{y} \, \ind_{(0,1)}(y) \, \dd y$.
\end{proof}

In our proof of Theorem~\ref{th:scalingY}, we will also need the following estimate.
This can be deduced from \cite[Lemma~6]{cf:RW},
but we give a direct proof in our setting.

\begin{lemma} \label{th:prellem}
As $s \downarrow 0$ we have
\begin{equation} \label{eq:tosh32}
	\P(Y_s > 1) = o(s) \,.
\end{equation}
\end{lemma}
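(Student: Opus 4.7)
The plan is to exploit the Poisson point process (PPP) representation already used in the proof of Proposition~\ref{th:scale}. Write $\Pi$ for the PPP on $[0,\infty)\times(0,1)$ with intensity $\dd x\otimes\frac{1}{y}\ind_{(0,1)}(y)\,\dd y$, and identify $Y_s$ with $\sum_{(s_i,t_i)\in\Pi,\,s_i\le s}t_i$. For a cutoff $\epsilon\in(0,1)$ to be chosen, decompose $Y_s = Y_s^{\ge\epsilon} + Y_s^{<\epsilon}$ according to whether $t_i\ge\epsilon$ or $t_i<\epsilon$. Since the two pieces depend on disjoint regions of $\Pi$, they are independent. Let $N_\epsilon$ be the number of jumps of $Y$ in $[0,s]$ of size at least $\epsilon$; then $N_\epsilon\sim\mathrm{Pois}(s\log(1/\epsilon))$ and $\E[Y_s^{<\epsilon}] = s\int_0^\epsilon\dd y = s\epsilon$.

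The plan is now to split according to $N_\epsilon\in\{0,1,\ge 2\}$:
\begin{equation*}
\P(Y_s>1)\,\le\,\P(Y_s^{<\epsilon}>1)\,+\,\P(Y_s>1,\,N_\epsilon=1)\,+\,\P(N_\epsilon\ge 2).
\end{equation*}
The first term is $\le s\epsilon$ by Markov, and the third is $\le\tfrac{1}{2}(s\log(1/\epsilon))^2$ from the Poisson tail. The delicate term is the middle one: on $\{N_\epsilon=1\}$ there is a single big jump $J$, and $\{Y_s>1\}$ coincides with $\{Y_s^{<\epsilon}>1-J\}$. The conditional law of $J$ given $N_\epsilon=1$ has density $\tfrac{1/y}{\log(1/\epsilon)}\ind_{(\epsilon,1)}(y)$, which is independent of $Y_s^{<\epsilon}$. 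For any $\delta\in(0,1-\epsilon)$, split further on $\{J\le 1-\delta\}$ vs.\ $\{J>1-\delta\}$: the former forces $Y_s^{<\epsilon}>\delta$, contributing at most $\P(N_\epsilon=1)\cdot\frac{s\epsilon}{\delta}$ by Markov and independence, while the latter is bounded simply by $\P(N_\epsilon=1,\,J>1-\delta)\le s\log\!\frac{1}{1-\delta}$, since $\P(N_\epsilon=1)\le s\log(1/\epsilon)$ and $\P(J>1-\delta\,|\,N_\epsilon=1)=\log(1/(1-\delta))/\log(1/\epsilon)$.

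Combining these four estimates gives
\begin{equation*}
\frac{\P(Y_s>1)}{s}\,\le\,\epsilon\,+\,\log\!\tfrac{1}{1-\delta}\,+\,s\Bigl(\epsilon\log\tfrac{1}{\epsilon}/\delta\,+\,\tfrac{1}{2}(\log\tfrac{1}{\epsilon})^2\Bigr).
\end{equation*}
Taking $\limsup_{s\downarrow 0}$ kills the $O(s)$ remainder, leaving $\epsilon+\log\frac{1}{1-\delta}$, which can be made arbitrarily small by sending $\epsilon\downarrow 0$ and then $\delta\downarrow 0$. This yields $\P(Y_s>1)=o(s)$ as required. The main obstacle is the contribution of a \emph{single} big jump whose size is close to $1$, since then even an arbitrarily small contribution from the infinitely many tiny jumps can push the sum past $1$; the point is that the conditional probability $\P(J>1-\delta\,|\,N_\epsilon=1)$ is only logarithmically small in $\delta$ and the relevant prefactor $s\log(1/\epsilon)$ absorbs the denominator $\log(1/\epsilon)$, so this term remains $o(s)$ after letting $\delta\downarrow 0$ last.
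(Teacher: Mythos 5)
Your proof is correct and uses essentially the same strategy as the paper: a Poisson point process decomposition by the number of jumps exceeding a cutoff, with a further case split on whether the single large jump is close to $1$. The only difference is that the paper tunes the cutoff levels as functions of $s$ (taking $\alpha_s=1/s$ and $\rho_s=1+\sqrt{s}$) to extract the sharper quantitative rate $O(s^{3/2})$, whereas you keep $\epsilon,\delta$ fixed and conclude via a $\limsup$ argument, which suffices for $o(s)$.
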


\begin{remark}
The bound \eqref{eq:tosh32} is an intermediate step in establishing
Theorem~\ref{th:scalingY} and it is not optimal. Indeed, it is a consequence
of Theorem~\ref{th:scalingY} that the optimal estimate is
\begin{equation} \label{eq:asrip}
	\P(Y_s > 1) = O(s^2) \qquad \text{as } s \downarrow 0 \,,
\end{equation}
because $\P(Y_s \le 1) = e^{-\gamma s} / \Gamma(s+1)$, by \eqref{eq:scalingf},
and we note that as $s \downarrow 0$ we have
\begin{equation} \label{eq:Gammadev}
	\Gamma(s+1) = \Gamma(1) + \Gamma'(1) s + O(s^2) 
	 = 1 - \gamma s + O(s^2) \,,
\end{equation}
since $\Gamma'(1) = \int_0^\infty \log u \, e^{-u} \, \dd u = - \gamma$. 
Relation \eqref{eq:asrip} then follows.
\end{remark}

\begin{proof}[Proof of Lemma~\ref{th:prellem}]
Fix a function $\alpha_s \to \infty$ as $s \to 0$, to be determined later.
Recall the definition \eqref{eq:MDeltaY} of $\Delta Y_u = Y_u - Y_{u-}$ and define
\begin{equation*}
	N_s \, := \, \sum_{u \in (0,s]} \ind_{\{\Delta Y_u > \frac{1}{\alpha_s}\}} 
	\, = \,
	\text{number of jumps of $Y$ of size $> \tfrac{1}{\alpha_s}$ 
	in the interval $(0,s]$} \,.
\end{equation*}
We recall that $Y$ only increases by jumps, that is
$Y_s = \sum_{u\in (0,s]} \Delta Y_u$. We denote by $Y_s^>$
the contribution to $Y_s$ given by
jumps of size $> \frac{1}{\alpha_s}$, and $Y_s^{\le} := Y_s - Y_s^>$.
Then we bound
\begin{equation} \label{eq:3deco}
	\P(Y_s > 1) 
	\,\le\, \P(N_s \ge 2) + \P(N_s = 1, Y_s > 1) + \P(N_s = 0, Y_s^{\le} > 1)
\end{equation}

For the first term, we note that
$N_s \sim \mathrm{Pois}(\lambda_s)$ with
$\lambda_s = s \int_{1/\alpha_s}^1 \frac{1}{x} \, \dd x
= s \log \alpha_s$,
hence
\begin{gather*}
	\P(N_s \ge 2) = O(\lambda_s^2) = O(s^2 (\log \alpha_s)^2) \,.
\end{gather*}
For the third term, since
$(Y_s^{\le})_{s\ge 0}$ has L\'evy measure
$\frac{1}{x} \, \ind_{(0, \frac{1}{\alpha_s})} (x) \, \dd x$, we can bound
\begin{equation} \label{eq:canbo}
	\P(Y_s^{\le} > 1) \le \E[Y_s^\le] 
	= s \int_0^{\frac{1}{\alpha_s}} x \, \frac{1}{x} \, \dd x
	= \frac{s}{\alpha_s} \,.
\end{equation}
We fix $\alpha_s = 1/s$, so that both $\P(N_s \ge 2)$
and $\P(Y_s^{\le} > 1)$ are $O(s^{3/2})$.

It remains to estimate the second term in the right hand side of \eqref{eq:3deco}.
On the event $\{N_s =1\}$, the random variable
$W :=Y_s^>$ has density
$\frac{1}{\log \alpha_s} \, \frac{1}{x} \, \ind_{(\frac{1}{\alpha_s},1)}(x)$. 
Also note that $Y_s^\le$ is independent of $N_s$.
If we fix $\rho_s \in (1,2)$, to be determined later,
we can write
\begin{equation*}
\begin{split}
	\P(N_s = 1, Y_s > 1) & \le \P(N_s = 1, Y_s^> > \tfrac{1}{\rho_s}) + \P(N_s=1,
	\ Y_s^> \le \tfrac{1}{\rho_s}, \  Y_s^\le > 1-\tfrac{1}{\rho_s} )\\
	& \le \P(N_s = 1) \big\{ \P(W > \tfrac{1}{\rho_s}) +
	\P(Y_s^\le > \tfrac{\rho_s-1}{\rho_s} ) \big\} \\
	& \le \lambda_s \bigg\{  \frac{\log \rho_s}{\log \alpha_s} +
	\frac{\rho_s}{\rho_s-1} \E[Y_s^\le] \bigg\} \,,
\end{split}
\end{equation*}
because $N_s \sim \mathrm{Pois}(\lambda_s)$.
Since $\lambda_s = s \log \alpha_s$ and $\E[Y_s^\le] = \frac{s}{\alpha_s}$,
see \eqref{eq:canbo}, we get
\begin{equation*}
	\P(N_s = 1, Y_s > 1) \le 
	s \log \alpha_s \bigg\{ \frac{\log \rho_s}{\log \alpha_s} +
	\frac{2s}{\alpha_s (\rho_s-1)} \bigg\} 
	= s \log \rho_s + \frac{\log \alpha_s}{\alpha_s} \,
	\frac{2 s^2}{\rho_s-1}
\end{equation*}
Note that $\lim_{s\to 0} \frac{\log \alpha_s}{\alpha_s} = 0$,
because we have fixed $\alpha_s = 1/s$.
We now choose $\rho_s = 1 + \sqrt{s}$ to get
$\P(N_s = 1, Y_s > 1) = O(s^{3/2})$, which completes the proof.
\end{proof}

\smallskip

\begin{proof}[Proof of Theorem \ref{th:scalingY}]

We start proving the first line of \eqref{eq:scalingf},
so we assume $t\in (0,1)$.

Recall that $M_s$ was defined in \eqref{eq:MDeltaY}.
Plainly, we can write
\begin{equation*}
	\P(Y_s \le t) = \P(Y_s \le t, \, M_s < t)
	= \P(M_s < t) \, \P(Y_s \le t \,|\, M_s < t) \,.
\end{equation*}
We use the PPP representation of $Y_s$ that we introduced in the
proof of Proposition~\ref{th:scale}. In particular, if $\Pi$
denotes a PPP with intensity measure $\mu$ in \eqref{eq:muu}, we can write
\begin{equation*}
	\P(M_s < t) = \P(\Pi \cap ([0,s] \times [t,1)) = \emptyset)
	= e^{-\mu([0,s] \times [t,1))} = e^{-s \int_t^1 \frac{1}{y} \, \dd y} = t^s \,.
\end{equation*}
For $t \in (0,1)$ we have 
$\P(Y_s \le t \,|\, M_s < t) = \P(Y_s \le 1)$, by Proposition~\ref{th:scale}, hence
\begin{equation}\label{eq:firstprove}
	\P(Y_s \le t) =  t^s \, \P(Y_s \le 1) \qquad \text{for } t \in (0,1) \,.
\end{equation}
This leads to
\begin{equation} \label{eq:almostden}
	f_s(t) = s \, t^{s-1} \, F_s(1) \qquad \text{for } t \in (0,1) \,, \qquad 
	\text{where} \qquad F_s(t) := \P(Y_s \le t).
\end{equation}

It remains to identify $F_s(1)$.
Since $(Y_s)_{s\geq 0}$ has stationary and independent increments,
for any $n\in\N$, the density $f_s$ is the convolution of $f_{s/n}$
with itself $n$ times. Then for any $t \in (0,1)$ we can write, by \eqref{eq:almostden},
\begin{equation*}
\begin{split}
	f_s(t) & = \int\limits_{0 < t_1 < \ldots < t_{n-1} < t}
	f_{\frac{s}{n}}(t_1) \, f_{\frac{s}{n}}(t_2-t_1) \, \cdots f_{\frac{s}{n}}(t-t_{n-1})
	\, \dd t_1 \ldots \dd t_{n-1} \\
	& = \big( \tfrac{s}{n} \, F_{\frac{s}{n}}(1) \big)^n
	\int\limits_{0 < t_1 < \ldots < t_{n-1} < t} t_1^{\frac{s}{n}-1}
	\, (t_2-t_1)^{\frac{s}{n}-1} \cdots (t-t_{n-1})^{\frac{s}{n}-1}
	\, \dd t_1 \ldots \dd t_{n-1} \\
	& = \big( \tfrac{s}{n} \, F_{\frac{s}{n}}(1) \big)^n
	\, t^{s-1} \,
	\int\limits_{0 < u_1 < \ldots < u_{n-1} < 1} u_1^{\frac{s}{n}-1}
	\, (u_2-u_1)^{\frac{s}{n}-1} \cdots (1-u_{n-1})^{\frac{s}{n}-1}
	\, \dd u_1 \ldots \dd u_{n-1} \\
	& = \big( \tfrac{s}{n} \, F_{\frac{s}{n}}(1) \big)^n
	\, t^{s-1} \, \frac{\Gamma(\frac{s}{n})^n}{\Gamma(s)}
	 = \big( F_{\frac{s}{n}}(1) \big)^n
	\, t^{s-1} \, \frac{\Gamma(1+\frac{s}{n})^n}{\Gamma(s)} \,,
\end{split}
\end{equation*}
where we recognized the density of the Dirichlet
distribution (with parameters $n$ and $\frac{s}{n}$) and, in the last step,
we used the property $\Gamma(1+x) = x \, \Gamma(x)$. 
By \eqref{eq:Gammadev}
\begin{equation*}
	\Gamma(1+\tfrac{s}{n})^n \, \xrightarrow[n\to\infty]{} \,
	e^{-\gamma \, s} \,.
\end{equation*}
Since $F_u(1) = 1 - o(u)$ as $u \to 0$, by Lemma~\ref{th:prellem},
we have $\big(F_{\frac{s}{n}}(1) \big)^n \to 1$. This yields
\begin{equation*}
	f_s(t) = \lim_{n\to\infty}
	\big( F_{\frac{s}{n}}(1) \big)^n
	\, t^{s-1} \, \frac{\Gamma(1+\frac{s}{n})^n}{\Gamma(s)}
	= \frac{t^{s-1} \, e^{-\gamma \, s}}{\Gamma(s)}
	= \frac{s \, t^{s-1} \, e^{-\gamma \, s}}{\Gamma(s+1)} \,,
\end{equation*}
which proves the first line of \eqref{eq:scalingf}.

\smallskip

It remains to prove the second line of \eqref{eq:scalingf}.
We exploit the PPP construction of 
$Y_s$, see \eqref{eq:muu}-\eqref{eq:YPi}.
By identifying the largest jump $M_s = u$,
see \eqref{eq:MDeltaY}, we have for any $t \in (0,\infty)$
\begin{equation}\label{eq:fst1}
\begin{split}
	\P(Y_s \in \dd t) & = \int_0^{t \wedge 1} \P(Y_s \in \dd t \,|\, M_s = u) \,
	\P(M_s \in \dd u) \\
	& = \int_0^{t\wedge 1} \Big\{ \tfrac{1}{u} \, f_s\big(\tfrac{t-u}{u}\big)
	\, \dd t \Big\} \,
	\Big\{ \tfrac{s}{u} \, e^{-s\int_u^1 \tfrac{\dd x}{x}} \, \dd u \Big\} \\
	& = \bigg(
	\int_0^{t\wedge 1} f_s\big(\tfrac{t-u}{u}\big)  \, s \, u^{s-2} \, \dd u \bigg) \,
	\dd t \,.
\end{split}
\end{equation}
The second equality holds for the following reasons.
\begin{itemize}

\item $Y_s$ conditioned on $\{M_s < u\}$ has the same law as $u Y_s$,
by Proposition~\ref{th:scale}, hence
\begin{equation*}
	\P(Y_s \in \dd t \,|\, M_s = u)
	= \P(Y_s \in \dd t - u \,|\, M_s < u)
	= \tfrac{1}{u} \, f_s\big(\tfrac{t-u}{u}) \, \dd u \,.
\end{equation*}

\item $\tfrac{s}{u}$ is the Poisson intensity 
of finding a jump of size $u$ in the time interval $[0,s]$,
while $e^{-s\int_u^1 \frac{\dd x}{x}}=u^s$ is the probability that all other jumps 
are smaller than $u$, hence
\begin{align*}
	& \P(M_s \in \dd u) = \mu([0,s] \times \dd u)
	\, e^{-\mu([0,s] \times (u,1))} = 
	\tfrac{s}{u} \, \dd u \, e^{- s \int_u^1 \frac{1}{x} \, \dd x} \,.
\end{align*}
\end{itemize}

Making the change of variable $a:=\frac{t-u}{u}$, we can rewrite \eqref{eq:fst1} as
\begin{equation}
\begin{aligned}
	f_s(t) & = s\, t^{s-1} \int_{(t-1)^+}^\infty \frac{f_s(a)}{(1+a)^s} \dd a \\
	& = s\,t^{s-1} \bigg(\int_0^\infty \frac{f_s(a)}{(1+a)^s} \dd a - \int_0^{(t-1)^+}	
	\frac{f_s(a)}{(1+a)^s} \dd a \bigg).
\end{aligned}
\end{equation}
For $t\in (0,1)$, the second integral equals $0$, while $f_s(t)
= \frac{s\, t^{s-1} \, e^{-\gamma \, s}}{\Gamma(s+1)}$
by the first line of \eqref{eq:scalingf}, that we have already proved.
This implies that the first integral must equal $\frac{e^{-\gamma \, s}}{\Gamma(s+1)}$. 
This concludes the proof of the second line of \eqref{eq:scalingf}. 
\end{proof}

\medskip
\section*{Acknowledgements}

We are very grateful to Thomas Simon for 
pointing out to us the connection with the Dickman function, and how
Theorem~\ref{th:scalingY} follows from results in \cite{cf:Sato}.
We thank Andreas Kyprianou for pointing out reference \cite{BKKK14}
and
Ester Mariucci for pointing out reference \cite{cf:RW}.
F.C. is supported by the PRIN Grant 
20155PAWZB ``Large Scale Random Structures''.
R.S. is supported by NUS grant R-146-000-253-114.
N.Z. is supported by EPRSC through grant EP/R024456/1.


\bigskip

\end{document}